
\documentclass[final]{siamltex}
\usepackage{epsfig}
\usepackage{color}
\usepackage{amsmath}
\usepackage{amscd}
\usepackage{amsmath, amsfonts, amssymb,mathrsfs}
\usepackage{multirow}
\usepackage{enumerate}
\usepackage{verbatim}
\usepackage{cite}
\usepackage{tikz}
\usetikzlibrary{matrix,arrows}
\usepackage[justification=raggedright]{caption}
\usepackage{subcaption}
\usepackage[ruled]{algorithm2e}

\graphicspath{{../FiguresUnboxedHelv/}}



\def \vec#1{{\bf{#1}}}
\def\pd#1#2{\frac{\partial #1}{\partial #2}}

\newcommand{\bi}{\begin{itemize}}
\newcommand{\ei}{\end{itemize}}

\newcommand{\diverg}{\vec{\nabla}\cdot}
\newcommand{\director}{\vec{n}}
\newcommand{\curl}{\vec{\nabla}\times}
\newcommand{\Ltwoinner}[3]{\langle #1,#2 \rangle_0}
\newcommand{\Ltwonorm}[2]{\Vert #1 \Vert_0}
\newcommand{\Ltwonormndim}[3]{\Vert #1 \Vert_0}
\newcommand{\Ltwoinnerndim}[4]{\langle #1,#2 \rangle_0}
\newcommand{\Rthree}{\mathbb{R}^3}

\newcommand{\diff}[1]{\, d#1}

\newcommand{\ltwonorm}[1]{\vert #1 \vert}
\newcommand{\ltwoinner}[2]{( #1, #2 )}
\newcommand{\kdirector}{\vec{n}_k}
\newcommand{\ddirector}{\delta \director}
\newcommand{\dlambda}{\delta \lambda}
\newcommand{\klambda}{\lambda_k}
\DeclareMathOperator*{\argmin}{argmin}

\newcommand{\lagdivn}{\mathcal{L}_{\director}[\vec{v}]}
\newcommand{\lagdivlam}{\mathcal{L}_{\lambda}[\gamma]}

\newcommand{\Honenorm}[2]{\Vert #1 \Vert_1}
\newcommand{\Hnnorm}[2]{\Vert #1 \Vert_#2}

\newcommand{\Hdc}{\mathcal{H}^{DC}{(\Omega)}}
\newcommand{\Hdcnot}{\mathcal{H}^{DC}_0{(\Omega)}}
\newcommand{\Hdcone}{\mathcal{H}^{DC^1}_0 {(\Omega)}}
\newcommand{\Hdconespace}{\mathcal{H}^{DC^1} {(\Omega)}}

\newcommand{\Hone}[1]{H^1(#1)}
\newcommand{\Hn}[2]{H^{#1}(#2)}
\newcommand{\Honenot}[1]{H^1_0({#1})}

\newcommand{\Ltwo}[1]{L^2(#1)}
\newcommand{\Ltwondim}[2]{L^2({#1})^{#2}}
\newcommand{\Lp}[1]{L^p (\Omega)}
\newcommand{\Hcurl}[1]{H(\text{curl},#1)}
\newcommand{\Hdiv}[1]{H(\text{div},#1)}
\newcommand{\Hdivnot}[1]{H_0(\text{div},#1)}
\newcommand{\Hcurlnot}[1]{H_0(\text{curl},#1)}
\newcommand{\Linfinity}[1]{L^{\infty}(\Omega)}
\newcommand{\Hdconenorm}[2]{\Vert #1 \Vert_{DC^1}}
\newcommand{\Hdcnorm}[2]{\Vert #1 \Vert_{DC}}

\newcommand{\triangulation}{\mathcal{T}_h}

\newcommand{\diam}{\text{diam }}

\newcounter{casenum}
\newenvironment{caseof}{\setcounter{casenum}{1}}{}
\newcommand{\case}[2]{\vskip.5\baselineskip\par\noindent {\bfseries Case \arabic{casenum}.} #1 \\ #2\addtocounter{casenum}{1}}

\newtheorem{assumption}[theorem]{Assumption}

\title{An Energy-Minimization Finite-Element Approach for the Frank-Oseen Model of Nematic Liquid Crystals: Continuum and Discrete Analysis}


\author{J. H. Adler, T. J. Atherton, D. B. Emerson, S. P. MacLachlan}

\begin{document}

\maketitle

\begin{abstract}
This paper outlines an energy-minimization finite-element approach to the computational modeling of equilibrium configurations for nematic liquid crystals under free elastic effects. The method targets minimization of the system free energy based on the Frank-Oseen free-energy model. Solutions to the intermediate discretized free elastic linearizations are shown to exist generally and are unique under certain assumptions. This requires proving continuity, coercivity, and weak coercivity for the accompanying appropriate bilinear forms within a mixed finite-element framework. Error analysis demonstrates that the method constitutes a convergent scheme. Numerical experiments are performed for problems with a range of physical parameters as well as simple and patterned boundary conditions. The resulting algorithm accurately handles heterogeneous constant coefficients and effectively resolves configurations resulting from complicated boundary conditions relevant in ongoing research.
 \end{abstract}

\begin{keywords}
nematic liquid crystals,  mixed finite elements, saddle-point problem, Newton linearization, energy optimization.
\end{keywords}

\begin{AMS}
76A15, 65H10, 65N30, 49M15, 65N22
\end{AMS}

\pagestyle{myheadings}
\thispagestyle{plain}
\markboth{\sc Adler, Atherton, Emerson, MacLachlan}{\sc Nematic LC Energy Minimization}

\section{Introduction}

Liquid crystals, whose discovery is attributed to Reinitzer in 1888 \cite{Reinitzer1}, are substances that possess mesophases with properties intermediate between liquids and crystals. The mesophases exist at different temperatures or solvent concentrations. In recent years, research on the novel properties of liquid crystals has rapidly expanded. Modern applications include  nanoparticle organization, liquid crystal-functionalized polymer fibers \cite{Lagerwall1}, and liquid crystal elastomers designed to produce effective actuator devices such as light driven motors \cite{Yamada1} and artificial muscles \cite{Thomsen1}.

The focus of this paper is on nematic liquid crystal phases, which are formed by rod-like molecules that self-assemble into an ordered structure, such that the molecules tend to align along a preferred orientation. The preferred average direction at any point in a domain, $\Omega$, is known as the director, denoted $\director(x,y,z) = (n_1, n_2, n_3)^T$. The director is taken to be of unit length at every point and headless, that is $\director$ and $-\director$ are indistinguishable, reflecting the observed experimental symmetry of the phase. 

In addition to their self-structuring properties, the orientation of a nematic liquid crystal may be affected by applied electric fields. Moreover, since these materials are birefringent, that is these materials' refractive indices depend on the polarization of light, they can be used to control the propagation of light through a nematic structure. These traits have led, and continue to lead, to important discoveries in display technologies and beyond \cite{Lagerwall1}. Thorough overviews of liquid crystal physics and properties are found in \cite{Stewart1, deGennes1, Chandrasekhar1}. 

Many mathematical and computational models of liquid crystal continuum theory lead to complicated systems involving unit length constrained vector fields. Currently, the complexity of such systems has restricted the existence of known analytical solutions to simplified geometries in one (1-D) or two-dimensions (2-D), often under strong simplifying assumptions. When coupled with electric fields and other effects, far fewer analytical solutions exist, even in 1-D \cite{Stewart1}. In addition, associated systems of partial differential equations, such as the equilibrium equations \cite{Stewart1, Ericksen4}, suffer from non-unique solutions, which must be distinguished via energy arguments. Due to such difficulties, efficient, theoretically supported, numerical approaches to the modeling of nematic liquid crystals under free elastic and augmented electric effects are of great importance. This paper discusses the modeling of free elastic effects. The addition of electric field effects will be the subject of future work. A number of computational techniques for liquid crystal equilibrium and dynamics problems exist \cite{Liu1, Liu2, Liu3, Stewart1}, including least-squares finite-element methods \cite{Atherton1} and discrete Lagrange multiplier approaches \cite{Ramage1, Ramage2}. 

In this paper, we propose a method that directly targets energy minimization in the continuum, via Lagrange multiplier theory on Banach spaces. The approach is derived absent the often used one-constant approximations \cite{Ramage1, Liu1, Liu2, Liu3, Ramage2, Stewart1, Cohen1}; that is, the method described here and the accompanying theory are applied for a wide range of physical parameters. This allows for significantly improved modeling of physical phenomena not captured in many models. Furthermore, most models and analytical approaches rely on assumptions to reduce the dimensionality of the problem. Here, the method and theory are suitable for use on 2-D and 3-D domains and are easily combined with additional energy effects. 

After defining the energy functional to be minimized, first-order optimality conditions are computed. These first-order conditions contain highly nonlinear terms and are, therefore, linearized with a generalized Newton's method. The resulting Newton linearization resembles a typical mixed finite-element method formulation \cite{Brenner1, Braess1, Boffi1}. However, these forms present unique difficulties not found, for instance, in the Stokes' problem. In particular, the forms related to the nonlinear unit-length constraint for $\director$ require novel treatment.  Additionally, the proofs of continuity and coercivity differ significantly from many standard approaches due to the inherent complexity of the bilinear forms. 

In the continuum, it is possible to demonstrate coercivity for the relevant bilinear form with moderate simplifying assumptions. With auxiliary regularity assumptions, continuity of the involved bilinear forms is also established. On the other hand, for a pair of discrete spaces, continuity, coercivity, and weak coercivity for the relevant bilinear forms are proved. The main result of this paper proves the existence and uniqueness of solutions to each discrete Newton iteration. Error analysis is also performed to elaborate the convergence order of the approximations. The method is implemented and run for a number of configurations, including those relevant to ongoing research.

This paper is organized as follows. We first introduce the liquid crystal model under consideration, derive the method, and discuss Dirichlet boundary condition simplifications in Section \ref{energymodels}. In Section \ref{existuniquesection}, well-posedness of the Newton iterations for a pair of discrete spaces is proved and an error analysis is performed. The numerical methodology and numerical experiments are detailed in Section \ref{nummethodology}. Finally, Section \ref{conclusion} gives some concluding remarks and future work is discussed.

\section{Energy Model} \label{energymodels}

At equilibrium, absent any external forces, fields, or boundary conditions, the free elastic energy present in a liquid crystal sample is given by an integral functional, $\mathcal{F}$, which depends on the state variables of the system. A liquid crystal sample tends to the state of lowest free energy. While a number of free-energy models exist cf. \cite{Davis1}, this paper considers the Frank-Oseen free elastic model \cite{Stewart1, Virga1, Frank1}. The Frank-Oseen equations represent the free elastic energy density, $w_F$, in a sample as
\begin{equation*} \label{FrankOseenFree}
w_F = \frac{1}{2}K_1(\diverg \director)^2+ \frac{1}{2}K_2(\director \cdot \curl \director)^2+ \frac{1}{2}K_3\vert \director \times \curl \director \vert^2 + \frac{1}{2}(K_2+K_4)\diverg[(\director \cdot \vec{\nabla}) \director - (\diverg \director) \director].
\end{equation*}
Throughout this paper, the standard Euclidean inner product and norm are denoted $(\cdot, \cdot)$ and $\vert \cdot \vert$, respectively. The $K_i$, $i=1,2,3,4$, are known as the Frank elastic constants \cite{Frank1}, which vary depending on temperature and liquid crystal type. By Ericksen's inequalities \cite{Ericksen2}, $K_j \geq 0$ for $j=1,2,3$. Each term represents an energy penalty for the presence of splay, twist, bend, and saddle-splay, respectively. 

It can be shown that
\begin{equation} \label{SaddleSplayIdentity}
\diverg[(\director \cdot \vec{\nabla}) \director - (\diverg \director) \director] = \nabla n_1 \cdot \frac{\partial \director}{\partial x}+\nabla n_2 \cdot \frac{\partial \director}{\partial y}+\nabla n_3 \cdot \frac{\partial \director}{\partial z}-(\diverg \director)^2.
\end{equation}
Additionally, let
\begin{equation} \label{matrixD}
\vec{Z} = \kappa \director \otimes \director + (\vec{I} - \director \otimes \director) = \vec{I} - (1-\kappa) \director \otimes \director,
\end{equation}
where $\kappa = K_2/K_3$; in general, we consider the case that  $K_2, K_3 > 0$. Denote the classical $\Ltwo{\Omega}$ inner product and norm as $\Ltwoinner{\cdot}{\cdot}{\Omega}$ and $\Ltwonorm{\cdot}{\Omega}$, respectively. Employing \eqref{SaddleSplayIdentity}, \eqref{matrixD}, and the fact that $\director$ has unit length, the total free energy for a domain, $\Omega$, is
\begin{align}
\int_{\Omega} w_F \diff{V} &=\frac{1}{2}(K_1-K_2-K_4) \Ltwonorm{\diverg \director}{\Omega}^2 + \frac{1}{2}K_3\Ltwoinnerndim{\vec{Z} \curl \director}{\curl \director}{\Omega}{3} \nonumber \\
& \qquad + \frac{1}{2}(K_2+K_4) \big(\Ltwoinnerndim{\nabla n_1}{\frac{\partial \director}{\partial x}}{\Omega}{3}+ \Ltwoinnerndim{\nabla n_2}{\frac{\partial \director}{\partial y}}{\Omega}{3}+ \Ltwoinnerndim{\nabla n_3}{\frac{\partial \director}{\partial z}}{\Omega}{3} \big). \label{SystemEnergy}
\end{align} 

For the special case of full Dirichlet boundary conditions, we consider a fixed director $\director$ at each point on the boundary of $\Omega$. Considering the integration carried out on the terms in \eqref{SaddleSplayIdentity},
\begin{align} \label{stronganchoringdivthm}
&\frac{1}{2}(K_2+K_4) \int_{\Omega} \diverg[(\director \cdot \vec{\nabla}) \director - (\diverg \director) \director] \diff{V} \nonumber \\
&\qquad \qquad \qquad = \frac{1}{2}(K_2+K_4)\int_{\partial \Omega} [(\director \cdot \vec{\nabla}) \director - (\diverg \director) \director] \cdot \mathbf{\nu} \diff{S},
\end{align}
by the divergence theorem. Further, since $\director$ is fixed along $\partial \Omega$, the energy contributed by $\director$ on the boundary is constant regardless of the configuration of $\director$ on the interior of $\Omega$. Thus, in the minimization to follow, the energy contribution from this term is ignored. For this reason, \eqref{stronganchoringdivthm} is often referred to as a null Lagrangian \cite{Virga1}.

A number of methods involving computation of liquid crystal equilibria or dynamics utilize the so called one-constant approximation that $K_1=K_2=K_3$ and $K_4 = 0$ \cite{Ramage1, Liu1, Stewart1, Cohen1}, in order to significantly simplify the free elastic energy density to
\begin{equation*}
\hat{w}_F = \frac{1}{2}K_1 \vert \nabla \director \vert^2, \qquad \text{where } \vert \nabla \director \vert^2 = \sum_{i,j=1}^3 \left ( \pd{n_i}{x_j} \right)^2.
\end{equation*}
This expression for the free energy density is more amenable to theoretical development but ignores significant physical characteristics of the nematic \cite{Lee1, Atherton2}. The following method is derived without such an assumption.

\subsection{Free Elastic Energy Minimization} \label{freeenergymin}

In this section, a general approach for computing the free elastic equilibrium state for $\director$ is derived. This equilibrium state corresponds to the configuration which minimizes the system free energy subject to the local constraint that $\director$ is of unit length throughout the sample volume, $\Omega$. That is, the minimizer must satisfy $\director \cdot \director = 1$ pointwise throughout the volume. To compute this state, define the functional, equivalent to \eqref{SystemEnergy},
\begin{align} \label{functional2}
\mathcal{F}_1(\director) &= (K_1-K_2-K_4) \Ltwonorm{\diverg \director}{\Omega}^2 + K_3\Ltwoinnerndim{\vec{Z} \curl \director}{\curl \director}{\Omega}{3} \nonumber \\
& \qquad + (K_2+K_4) \big(\Ltwoinnerndim{\nabla n_1}{\frac{\partial \director}{\partial x}}{\Omega}{3}+ \Ltwoinnerndim{\nabla n_2}{\frac{\partial \director}{\partial y}}{\Omega}{3}+ \Ltwoinnerndim{\nabla n_3}{\frac{\partial \director}{\partial z}}{\Omega}{3} \big).
\end{align}
Define 
\begin{align*}
\Hdiv{\Omega} &= \{\vec{v} \in L^2(\Omega)^3 : \diverg \vec{v} \in L^2(\Omega) \},\\
\Hcurl{\Omega} &= \{ \vec{v} \in L^2(\Omega)^3 : \curl \vec{v} \in L^2(\Omega)^3 \}.
\end{align*}
Further, let
\begin{align*}
\Hdivnot{\Omega} &= \{\vec{v} \in \Hdiv{\Omega} : \mathbf{\nu} \cdot \vec{v} = 0 \text{ on } \partial \Omega\}, \\
\Hcurlnot{\Omega} &= \{\vec{v} \in \Hcurl{\Omega} : \mathbf{\nu} \times \vec{v} = \vec{0} \text{ on } \partial \Omega\},
\end{align*}
where $\mathbf{\nu}$ is the outward unit normal for $\partial \Omega$. Define 
\begin{equation*}
\Hdc= \{ \vec{v} \in \Hdiv{\Omega} \cap \Hcurl{\Omega} : B(\vec{v}) = g \},
\end{equation*}
with norm $\Hdcnorm{\vec{v}}{\Omega}^2 = \Ltwonormndim{\vec{v}}{\Omega}{3}^2 + \Ltwonorm{\diverg \vec{v}}{\Omega}^2 + \Ltwonormndim{\curl \vec{v}}{\Omega}{3}^2$ and appropriate boundary conditions $B(\vec{v})=g$. Further, let $\Hdcnot = \{ \vec{v} \in \Hdiv{\Omega} \cap \Hcurl{\Omega} : B(\vec{v}) = \vec{0} \}$. Finally,  denote the unit sphere as $\mathcal{S}^2$. The desired minimization becomes
\begin{equation*} \label{minioversphere}
\director_{*} = \argmin_{\director \in \mathcal{S}^2 \cap \Hdc} \mathcal{F}_1(\director).
\end{equation*}

In the presence of full Dirichlet boundary conditions, the functional to be minimized is significantly simplified as
\begin{equation} \label{functional3}
\mathcal{F}_2(\director) = K_1 \Ltwonorm{\diverg \director}{\Omega}^2 + K_3\Ltwoinnerndim{\vec{Z} \curl \director}{\curl \director}{\Omega}{3},
\end{equation}
by the application of \eqref{stronganchoringdivthm}. However, the functional still contains nonlinear terms introduced by the presence of $\vec{Z} = \vec{Z}(\director)$. Note that this simplification is also applicable to a rectangular domain with mixed Dirichlet and periodic boundary conditions. Such a domain is considered in the numerical experiments presented here. 

We proceed with the functional in \eqref{functional2} in building a framework for minimization under general boundary conditions. However, in the treatment of existence and uniqueness theory, we assume the application of full Dirichlet or mixed Dirichlet and periodic boundary conditions and, therefore, utilize the simplified form in \eqref{functional3}.

\subsection{First-Order Optimality Conditions and Newton Linearization} \label{newtonstepssection}

Since $\director$ must be of unit length, it is natural to employ a Lagrange multiplier approach. This length requirement represents a pointwise equality constraint, such that $\ltwoinner{\director}{\director} - 1 = 0$. Thus, following general constrained optimization theory \cite{Luenberger1}, define the Lagrangian
\begin{align*}
\mathcal{L}(\director, \lambda) &= \mathcal{F}_1(\director) + \int_{\Omega} \lambda(\vec{x})(\ltwoinner{\director}{\director}-1) \diff{V},
\end{align*}
where $\lambda \in \Ltwo{\Omega}$. In order to minimize \eqref{functional2}, we compute the G\^{a}teaux derivatives of $\mathcal{L}$ with respect to $\director$ and $\lambda$ in the directions $\vec{v} \in \Hdcnot$ and $\gamma \in L^2(\Omega)$, respectively. Hence, the necessary continuum first-order optimality conditions are
\begin{align}
\lagdivn &= \frac{\partial}{\partial \director} \mathcal{L}(\director, \lambda) [\vec{v}] =0, & & \forall \vec{v} \in \Hdcnot, \label{lagrangeweakforminitial} \\
\lagdivlam &= \frac{\partial}{\partial \lambda} \mathcal{L}(\director, \lambda) [\gamma] =0,& & \forall \gamma \in L^2(\Omega). \label{constraintinitial}
\end{align}
Computing these derivatives yields
\begin{align*}
\lagdivn = &2(K_1-K_2-K_4)\Ltwoinner{\diverg \director}{\diverg \vec{v}}{\Omega} + 2K_3\Ltwoinnerndim{\vec{Z}(\director) \curl \director}{\curl \vec{v}}{\Omega}{3} \nonumber \\
&+ 2(K_2-K_3)\Ltwoinner{\director \cdot \curl \director}{\vec{v} \cdot \curl \director}{\Omega} + 2(K_2+K_4)\big(\Ltwoinnerndim{\nabla n_1}{\pd{\vec{v}}{x}}{\Omega}{3} \nonumber \\ 
&+\Ltwoinnerndim{\nabla n_2}{\pd{\vec{v}}{y}}{\Omega}{3} +\Ltwoinnerndim{\nabla n_3}{\pd{\vec{v}}{z}}{\Omega}{3} \big) +2 \int_{\Omega} \lambda \ltwoinner{\director}{\vec{v}} \diff{V},
\end{align*}
and
\begin{align*}
\lagdivlam&= \int_{\Omega} \gamma (\ltwoinner{\director}{\director} -1) \diff{V}.
\end{align*}

The variational system contains nonlinearities in both \eqref{lagrangeweakforminitial} and \eqref{constraintinitial}. Therefore, Newton iterations are employed by computing a generalized first-order Taylor series expansion, requiring computation of the Hessian \cite{Benzi1, Nocedal1}.

Let $\kdirector$ and $\lambda_k$ be the current approximations for $\director$ and $\lambda$, respectively. Additionally, let $\ddirector= \director_{k+1} - \kdirector$ and $\dlambda = \lambda_{k+1}-\klambda$ be updates to these approximations. Then, the Newton iterations are denoted
 \begin{equation} \label{newtonhessian}
\left [ \begin{array}{c c}
\mathcal{L}_{\director \director} &  \mathcal{L}_{\director \lambda} \\
\mathcal{L}_{\lambda \director} & \mathcal{L}_{\lambda \lambda}
\end{array} \right ]
\left [ \begin{array}{c} 
\ddirector \\
\dlambda
\end{array} \right]
= - \left[ \begin{array}{c}
\mathcal{L}_{\director} \\
\mathcal{L}_{\lambda} 
\end{array} \right],
\end{equation}
where each of the system components are evaluated at $\kdirector$ and $\klambda$. The matrix-vector multiplication indicates the direction that the derivatives in the Hessian are taken. That is,
\begin{align*}
\mathcal{L}_{\director \director}[\vec{v}] \cdot \ddirector = \pd{ }{\director} \left ( \mathcal{L}_{\director} (\kdirector, \klambda)[\vec{v}]\right)[\ddirector], & &
\mathcal{L}_{\director \lambda}[\vec{v}] \cdot \dlambda = \pd{ }{\lambda} \left( \mathcal{L}_{\director} (\kdirector, \klambda)[\vec{v}] \right ) [\dlambda], \\
\mathcal{L}_{\lambda \director}[\gamma] \cdot \ddirector = \pd{ }{\director} \left( \mathcal{L}_{\lambda} (\kdirector, \klambda)[\gamma] \right)[\ddirector], & &
\mathcal{L}_{\lambda \lambda}[\gamma] \cdot \dlambda = \pd{ }{\lambda} \left( \mathcal{L}_{\lambda} (\kdirector, \klambda)[\gamma] \right)[\dlambda],
\end{align*}
where the partials denote G\^{a}teaux derivatives in the respective variables. 

Since $\mathcal{L}(\director, \lambda)$ is linear in $\lambda$, $\mathcal{L}_{\lambda \lambda}[\gamma] \cdot \dlambda = 0$. Hence, the Hessian in $\eqref{newtonhessian}$ simplifies to a saddle-point structure,
\begin{equation*}
\left [ \begin{array}{c c}
\mathcal{L}_{\director \director} &  \mathcal{L}_{\director \lambda} \\
\mathcal{L}_{\lambda \director} & \vec{0}
\end{array} \right ].
\end{equation*}
The discrete form of this Hessian leads to a saddle-point matrix, which poses unique difficulties for the efficient computation of the solution to the resulting linear system. Such structures commonly appear in constrained optimization and other settings; for a comprehensive overview of discrete saddle-point problems see \cite{Benzi2}. Here, we focus only on the linearization step rather than the underlying linear solvers, which will be investigated in future work. Computing the remaining G\^{a}teaux derivatives yields
\begin{align}
\mathcal{L}_{\director \lambda}[\vec{v}] \cdot \dlambda &=  2 \int_{\Omega} \dlambda \ltwoinner{\kdirector}{\vec{v}} \diff{V} \label{freeElasticLagnlam},\\
\mathcal{L}_{\lambda \director}[\gamma] \cdot \ddirector &= 2 \int_{\Omega} \gamma \ltwoinner{\kdirector}{\ddirector} \diff{V} \label{freeElasticLaglamn},
\end{align}
and
\begin{align}
\mathcal{L}_{\director \director}[\vec{v}] \cdot \ddirector =&2(K_1 - K_2 - K_4)\Ltwoinner{\diverg \ddirector}{\diverg \vec{v}}{\Omega} + 2K_3 \Ltwoinnerndim{\vec{Z}(\kdirector) \curl \ddirector}{\curl \vec{v}}{\Omega}{3} \nonumber \\
& + 2(K_2 - K_3) \Big(\Ltwoinner{\ddirector \cdot \curl \vec{v}}{\kdirector \cdot \curl \kdirector}{\Omega}+\Ltwoinner{\kdirector \cdot \curl \vec{v}}{\ddirector \cdot \curl \kdirector}{\Omega} \nonumber \\
& + \Ltwoinner{\kdirector \cdot \curl \kdirector}{\vec{v} \cdot \curl \ddirector}{\Omega} + \Ltwoinner{\kdirector \cdot \curl \ddirector}{\vec{v} \cdot \curl \kdirector}{\Omega} \nonumber \\
& + \Ltwoinner{\ddirector \cdot \curl \kdirector}{\vec{v} \cdot \curl \kdirector}{\Omega}\Big) + 2(K_2+K_4) \big( \Ltwoinnerndim{\nabla \delta n_1}{\pd{\vec{v}}{x}}{\Omega}{3} \nonumber \\
& +\Ltwoinnerndim{\nabla \delta n_2}{\pd{\vec{v}}{y}}{\Omega}{3} + \Ltwoinnerndim{\nabla \delta n_3}{\pd{\vec{v}}{z}}{\Omega}{3} \big) + 2\int_{\Omega} \klambda \ltwoinner{\ddirector}{\vec{v}} \diff{V}\label{freeElasticLagnn}.
\end{align}
Constructing \eqref{newtonhessian} using \eqref{freeElasticLagnlam}-\eqref{freeElasticLagnn} yields a linearized variational system. For these iterations, we compute $\ddirector$ and $\dlambda$ satisfying this system for all $\vec{v} \in \Hdcnot$ and $\gamma \in L^2(\Omega)$ with the current approximations $\kdirector$ and $\klambda$. The current approximations are then corrected with the solutions $\ddirector$ and $\dlambda$ to yield $\director_{k+1}$ and $\lambda_{k+1}$. While they typically improve robustness and efficiency, we do not consider the use of line searches or trust regions \cite{Nocedal1} in the work presented here, leaving this for future work.

If we are considering a system with Dirichlet or mixed periodic and Dirichlet boundary conditions, as described above, we eliminate the $(K_2 + K_4)$ terms from \eqref{newtonhessian}, simplifying the linearization.

\subsection{Uniform Symmetric Positive Definiteness of \vec{Z}}

In subsequent sections, theory establishing the existence and uniqueness of solutions to the Newton linearizations is developed. A key property exploited in these proofs is that $\vec{Z}$ is uniformly symmetric positive definite (USPD) under reasonable assumptions.

It is relatively routine to show that $\vec{Z}$ is symmetric, self-adjoint in $\Ltwondim{\Omega}{3}$, and has, at each point in $\Omega$, eigenvalues $\mu = 1,1, 1+(\kappa-1)(n_1^2 + n_2^2+n_3^2)$. Ericksen's inequalities \cite{Ericksen2} guarantee that $K_2, K_3 \geq 0$. Throughout this paper, we consider the case where the inequality is strict; thus, $\kappa > 0$. 
We also assume that, in the Newton iterations, control has been maintained over the director length such that
\begin{equation} \label{limitsonnlength}
\alpha \leq n_1^2 + n_2^2+n_3^2 \leq \beta, \qquad \forall \vec{x} \in \Omega,
\end{equation}
with constants $0 < \alpha \leq 1 \leq \beta$.
\begin{lemma} \label{USPDlemma}
Assume that $\alpha \leq \ltwoinner{\director}{\director} \leq \beta \text{ for all } \vec{x} \in \Omega$. If $\kappa>1$, then $\vec{Z}$ is USPD on $\Omega$. For $0<\kappa<1$, if $\beta<\frac{1}{1-\kappa}$, then $\vec{Z}$ is USPD on $\Omega$.
\end{lemma}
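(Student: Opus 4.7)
The plan is to reduce USPD to a pointwise lower bound on the smallest eigenvalue of $\vec{Z}(\vec{x})$, and then bound that eigenvalue from below using the assumed control on $\ltwoinner{\director}{\director}$.

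First I would recall the spectral structure of $\vec{Z}$ that was noted just before the lemma: at each $\vec{x} \in \Omega$, the matrix $\vec{Z}=\vec{I}-(1-\kappa)\director\otimes\director$ has eigenvalues $1,1,$ and $\mu(\vec{x})=1+(\kappa-1)\ltwoinner{\director(\vec{x})}{\director(\vec{x})}$, with $\director(\vec{x})$ itself an eigenvector for the third eigenvalue. Consequently, for any $\vec{w}\in\Rthree$ and any $\vec{x}$,
\begin{equation*}
\vec{w}^T \vec{Z}(\vec{x}) \vec{w} \;\geq\; \min\{1,\mu(\vec{x})\}\,|\vec{w}|^2,
\end{equation*}
so USPD will follow as soon as I produce a constant $C>0$, independent of $\vec{x}$, with $\min\{1,\mu(\vec{x})\}\geq C$ on $\Omega$.

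Next I would split on the sign of $\kappa-1$. If $\kappa>1$, then $(\kappa-1)\ltwoinner{\director}{\director}\geq 0$ since $\ltwoinner{\director}{\director}\geq\alpha>0$ by \eqref{limitsonnlength}, giving $\mu(\vec{x})\geq 1$ pointwise; hence $\min\{1,\mu(\vec{x})\}\geq 1$ and I can take $C=1$. If $0<\kappa<1$, the variable eigenvalue is the small one: using the upper bound in \eqref{limitsonnlength},
\begin{equation*}
\mu(\vec{x})=1-(1-\kappa)\ltwoinner{\director}{\director}\;\geq\;1-(1-\kappa)\beta.
\end{equation*}
The hypothesis $\beta<\tfrac{1}{1-\kappa}$ is exactly what is needed to make $1-(1-\kappa)\beta>0$, so I can take $C=1-(1-\kappa)\beta>0$, again independent of $\vec{x}$.

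Combining the two cases gives a uniform positive lower bound on $\vec{w}^T\vec{Z}(\vec{x})\vec{w}/|\vec{w}|^2$, establishing USPD. I do not expect a genuine obstacle here: the argument is essentially an eigenvalue computation together with a two-case estimate using the assumed bounds on $\ltwoinner{\director}{\director}$. The only subtlety worth flagging is that in the $0<\kappa<1$ branch the strict inequality $\beta<1/(1-\kappa)$ is essential, since $\beta=1/(1-\kappa)$ would allow $\mu(\vec{x})$ to vanish and destroy the uniform lower bound.
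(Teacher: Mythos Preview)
Your proposal is correct and follows essentially the same approach as the paper: both arguments read off the eigenvalues $1,1,1+(\kappa-1)\ltwoinner{\director}{\director}$ of $\vec{Z}$, split on the sign of $\kappa-1$, and use the bounds in \eqref{limitsonnlength} to obtain a uniform positive lower bound on the smallest eigenvalue. The only minor difference is that the paper also records the corresponding uniform \emph{upper} bounds (e.g.\ $1+(\kappa-1)\beta$ when $\kappa>1$), which are not needed for USPD itself but are invoked later as the constant $\Lambda$ in the continuity estimates.
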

\begin{proof}
For a fixed $\vec{x} \in \Omega$, note that 
\begin{align*}
\ltwonorm{\vec{Z}(\vec{x})} &=\max_{1 \leq i \leq 3} \mu_i(\vec{Z}(\vec{x}) \label{spectralnormeq}),
\end{align*}
where $\mu_i(\vec{Z}(\vec{x}))$ denotes the $i^{\text{th}}$ eigenvalue of $\vec{Z}$. In order to keep the eigenvalues of $\vec{Z}$ positive, it is neccessary that $\mu_3 = (1+(\kappa-1)(n_1^2 + n_2^2+n_3^2)) > 0$. We consider two cases.
\begin{caseof}
\case{$\kappa >1$.}{
If $\kappa >1$, then \begin{equation}
0<1+(\kappa-1) \alpha \leq \mu_3 \leq 1+(\kappa-1)\beta, \qquad \forall \vec{x} \in \Omega. \label{kappalargeeigenineq}
\end{equation}
Thus, \eqref{kappalargeeigenineq} implies that the eigenvalues of $\vec{Z}(\vec{x})$ are bounded by 
\begin{equation}
1 \leq \mu_i \leq 1+(\kappa-1)\beta \label{bigkapbound}, \qquad \forall \vec{x} \in \Omega.
\end{equation}
Using standard functional analysis arguments \cite{Griffel1}, Inequality \eqref{bigkapbound} implies that
\begin{equation} \label{bigkappaUSPD}
1 \leq \frac{\vec{\xi}^T \vec{Z}(\vec{x}) \vec{\xi}}{\vec{\xi}^T \vec{\xi}} \leq 1+(\kappa-1)\beta, \qquad \forall \vec{x} \in \Omega, \vec{\xi} \in \Rthree.
\end{equation}
}
\case{$0< \kappa <1$.}{
For this case,
\begin{equation*}
1+(\kappa-1)\beta \leq \mu_3 \leq 1+(\kappa-1) \alpha.
\end{equation*}
However, the assumption that $\beta < \frac{1}{1-\kappa}$ implies that
\begin{equation*}
0 < 1+(\kappa-1)\beta \leq \mu_3 \leq 1+(\kappa-1) \alpha \label{kappasmalleigenineq}.
\end{equation*}
Hence, the eigenvalues of $\vec{Z}(\vec{x})$ are bounded by
\begin{equation}
0 < 1+(\kappa -1)\beta \leq \mu_i \leq 1 \label{smallkapbound}, \qquad \forall \vec{x} \in \Omega.
\end{equation}
As in the previous case, \eqref{smallkapbound} implies
\begin{equation}
1+(\kappa-1)\beta \leq \frac{\vec{\xi}^T \vec{Z}(\vec{x}) \vec{\xi}}{\vec{\xi}^T \vec{\xi}} \leq 1, \qquad \forall \vec{x} \in \Omega, \vec{\xi} \in \Rthree.\label{smallkappaUSPD}
\end{equation}
}
\end{caseof}
\noindent Thus, $\vec{Z}$ is USPD for any $\kappa>0$, as long as sufficient control is maintained on the length of $\director$.
\end{proof}

The USPD property of $\vec{Z}$ plays an important role in the proofs of existence and uniqueness of solutions to the linearization undertaken in the next section.

\section{Existence and Uniqueness for the Newton Linearizations} \label{existuniquesection} 

Here and in the following subsections, we will routinely make use of the following set of assumptions.
\begin{assumption} \label{secass}
Consider an open bounded domain, $\Omega$, which is a convex polyhedron or has a $C^{1,1}$ boundary. Note that this implies that the boundary is also Lipschitz continuous. Further, assume that $\alpha \leq \ltwonorm{\kdirector}^2 \leq \beta$, such that $\vec{Z}(\kdirector(\vec{x}))$ remains USPD with lower and upper bounds, $\eta$ and $\Lambda$, respectively. Finally, Dirichlet boundary conditions are applied. Therefore, both $\ddirector$ and $\vec{v}$ are in $\Hdivnot{\Omega} \cap \Hcurlnot{\Omega}$.
\end{assumption}

In the continuum, the above Newton systems are written in a general form as
\begin{align}
a(\ddirector, \vec{v}) + b(\vec{v}, \dlambda) &= F(\vec{v}),& & \forall \vec{v} \in \Hdcnot, \label{contgeneralizedNewtoniterationweakform1}  \\
b(\ddirector, \gamma) &= G(\gamma),& & \forall \gamma \in \Ltwo{\Omega}, \label{contgeneralizedNewtoniterationweakform2} 
\end{align}
where $a(\cdot, \cdot)$ is a symmetric bilinear form, $b(\cdot, \cdot)$ is a bilinear form, and $F$ and $G$ are linear functionals. For simplicity, throughout this section, we drop the notation of $\ddirector$, $\dlambda$. Thus,
\begin{align}
a(\vec{u}, \vec{v}) = &K_1\Ltwoinner{\diverg \vec{u}}{\diverg \vec{v}}{\Omega} + K_3 \Ltwoinnerndim{\vec{Z}(\kdirector) \curl \vec{u}}{\curl \vec{v}}{\Omega}{3} \nonumber \\
& \qquad + (K_2-K_3) \Big(\Ltwoinner{\vec{u} \cdot \curl \vec{v}}{\kdirector \cdot \curl \kdirector}{\Omega}+\Ltwoinner{\kdirector \cdot \curl \vec{v}}{\vec{u} \cdot \curl \kdirector}{\Omega} \nonumber \\
&\qquad + \Ltwoinner{\kdirector \cdot \curl \kdirector}{\vec{v} \cdot \curl \vec{u}}{\Omega} + \Ltwoinner{\kdirector \cdot \curl \vec{u}}{\vec{v} \cdot \curl \kdirector}{\Omega} \nonumber \\
& \qquad + \Ltwoinner{\vec{u} \cdot \curl \kdirector}{\vec{v} \cdot \curl \kdirector}{\Omega}\Big) + \int_{\Omega} \klambda \ltwoinner{\vec{u}}{\vec{v}} \diff{V} \label{auvform},
\end{align}
and
\begin{align*} 
 b(\vec{v}, \gamma) = \int_{\Omega} \gamma \ltwoinner{\kdirector}{\vec{v}} \diff{V}.
\end{align*}
Moreover,
\begin{align*}
F(\vec{v}) &= - \Big(K_1\Ltwoinner{\diverg \kdirector}{\diverg \vec{v}}{\Omega} + K_3\Ltwoinnerndim{\vec{Z}(\kdirector) \curl \kdirector}{\curl \vec{v}}{\Omega}{3} \nonumber \\
&\qquad + (K_2-K_3)\Ltwoinner{\kdirector \cdot \curl \kdirector}{\vec{v} \cdot \curl \kdirector}{\Omega}  + \int_{\Omega} \klambda \ltwoinner{\kdirector}{\vec{v}} \diff{V}  \Big),
\end{align*}
and
\begin{align*}
G(\gamma) = -\frac{1}{2} \int_{\Omega} \gamma (\ltwoinner{\kdirector}{\kdirector} -1) \diff{V}.
\end{align*}

In this section, we aim to show that the system in \eqref{newtonhessian} is well-posed. Therefore, continuity, coercivity, and weak coercivity results are desired for the bilinear forms $a(\cdot, \cdot)$ and $b(\cdot, \cdot)$. Due to the complexity of the bilinear forms, deriving theoretical results in the continuum is challenging. However, the following lemmas hold.
\begin{lemma} \label{contcoercivityauv}
Under Assumption \ref{secass} and the assumption that $\klambda$ is pointwise non-negative,  if $\kappa =1$, there exists an $\alpha_0 >0$ such that $\alpha_0 \Hdcnorm{\vec{v}}{\Omega}^2 \leq a(\vec{v}, \vec{v})$ for all $\vec{v} \in \Hdcnot$.
\end{lemma}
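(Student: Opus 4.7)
The plan is to exploit the dramatic collapse of $a(\cdot,\cdot)$ that occurs at $\kappa=1$ and then reduce coercivity to a single Poincar\'e--Friedrichs-type inequality for vector fields.

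First, I would specialize \eqref{auvform} to $\vec{u}=\vec{v}$ and use $\kappa=1$ carefully. Since $\kappa = K_2/K_3 = 1$ forces $K_2=K_3$, every cross-term in $a(\vec{v},\vec{v})$ carrying the coefficient $(K_2-K_3)$ drops out. Moreover, by \eqref{matrixD}, $\vec{Z}(\kdirector) = \vec{I} - (1-\kappa)\kdirector \otimes \kdirector = \vec{I}$, so the $\vec{Z}$-weighted curl term reduces to $K_3 \Ltwonormndim{\curl \vec{v}}{\Omega}{3}^2$. What remains is the clean expression
\begin{equation*}
a(\vec{v},\vec{v}) = K_1 \Ltwonorm{\diverg \vec{v}}{\Omega}^2 + K_3 \Ltwonormndim{\curl \vec{v}}{\Omega}{3}^2 + \int_{\Omega} \klambda |\vec{v}|^2 \diff{V}.
\end{equation*}

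Second, since $\klambda \geq 0$ pointwise by hypothesis, the mass term is non-negative and may be discarded. Together with $K_1, K_3 > 0$ (Ericksen's inequalities together with $\kappa>0$), this yields
\begin{equation*}
a(\vec{v},\vec{v}) \geq \min(K_1,K_3)\Bigl( \Ltwonorm{\diverg \vec{v}}{\Omega}^2 + \Ltwonormndim{\curl \vec{v}}{\Omega}{3}^2 \Bigr), \qquad \forall \vec{v} \in \Hdcnot.
\end{equation*}

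Third, the only component of $\Hdcnorm{\vec{v}}{\Omega}^2$ not yet controlled is $\Ltwonormndim{\vec{v}}{\Omega}{3}^2$. Here I would invoke a Poincar\'e--Friedrichs inequality for vector fields: since $\Omega$ is a convex polyhedron or has a $C^{1,1}$ boundary (Assumption \ref{secass}) and $\vec{v}\in \Hdivnot \cap \Hcurlnot$ has both vanishing normal and vanishing tangential trace, the classical embedding $\Hdivnot \cap \Hcurlnot \hookrightarrow H^1(\Omega)^3$ applies and the combined boundary condition forces the relevant harmonic-fields kernel to be trivial. Consequently there is $C(\Omega)>0$ with
\begin{equation*}
\Ltwonormndim{\vec{v}}{\Omega}{3}^2 \leq C(\Omega) \Bigl( \Ltwonorm{\diverg \vec{v}}{\Omega}^2 + \Ltwonormndim{\curl \vec{v}}{\Omega}{3}^2 \Bigr), \qquad \forall \vec{v} \in \Hdcnot.
\end{equation*}
Combining the two displays and setting $\alpha_0 = \min(K_1,K_3)/(1+C(\Omega))$ produces $\alpha_0 \Hdcnorm{\vec{v}}{\Omega}^2 \leq a(\vec{v},\vec{v})$, as required.

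The main obstacle is the third step: identifying the precise vector-valued Poincar\'e--Friedrichs statement that applies in the present geometric setting. The regularity on $\partial\Omega$ built into Assumption \ref{secass} is exactly what makes this work, since for rougher domains the $H^1$ regularity of $\Hdivnot \cap \Hcurlnot$ fields can fail and additional harmonic fields can appear, either of which would break the estimate.
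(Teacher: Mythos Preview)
Your proof is correct and follows essentially the same route as the paper: the paper simply refers to the proof of the later discrete Lemma~\ref{coercivityauv}, which likewise collapses the $(K_2-K_3)$ terms and $\vec{Z}$ at $\kappa=1$, invokes the vector Poincar\'e--Friedrichs inequality (via \cite[Remark~2.7]{Girault1} combined with the scalar Poincar\'e inequality) to control $\Ltwonormndim{\vec{v}}{\Omega}{3}^2$ by $\Ltwonorm{\diverg \vec{v}}{\Omega}^2 + \Ltwonormndim{\curl \vec{v}}{\Omega}{3}^2$, and discards the $\klambda$ term by non-negativity, arriving at the same constant $\alpha_0 = \min(K_1,K_3)/(C+1)$.
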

\begin{proof}
The proof of this lemma is identical to that of Lemma \ref{coercivityauv} below.
\end{proof} \\
If additional regularity is asserted, such that $\ddirector$ and $\vec{v}$ are elements of $\Hdcone = \{ \vec{w} \in \Hdcnot : \curl \vec{w} \in \Hone{\Omega}^3 \}$ with norm $\Hdconenorm{\vec{w}}{\Omega}^2 = \Ltwonormndim{\vec{w}}{\Omega}{3}^2 + \Ltwonorm{\diverg \vec{w}}{\Omega}^2 + \Honenorm{\curl \vec{w}}{\Omega}^2$, where $\Honenorm{\cdot}{\Omega}$ denotes the standard norm on $\Hone{\Omega}$, then the next two lemmas hold for arbitrary $\kappa$.
\begin{lemma} \label{contboundedlinearforms}
Under Assumption \ref{secass}, $F$ and $G$ are bounded linear functionals on $\Hdcone$ and $\Ltwo{\Omega}$, respectively.
\end{lemma}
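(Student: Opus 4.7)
The plan is to bound $G$ first, using the $L^\infty$ control on $|\kdirector|^2$ supplied by Assumption \ref{secass}, and then to bound $F$ term by term, appealing to the extra regularity $\curl \kdirector \in \Hone{\Omega}^3$ that the $\Hdcone$ setting affords (and which we likewise presume of $\kdirector$, since it appears inside $F$ on the same footing as the iterate). Linearity of both functionals in their arguments is immediate from inspection of the integrals, so the work lies entirely in the boundedness estimates.

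For $G$, I would apply Cauchy--Schwarz in $\Ltwo{\Omega}$ to obtain
\begin{equation*}
|G(\gamma)| \leq \tfrac{1}{2} \Ltwonorm{\gamma}{\Omega} \Ltwonorm{\ltwoinner{\kdirector}{\kdirector} - 1}{\Omega},
\end{equation*}
and then bound the remaining factor using the pointwise estimate $|\,|\kdirector|^2 - 1\,| \leq \max(\beta - 1, 1 - \alpha)$ from Assumption \ref{secass} together with the boundedness of $\Omega$, producing a constant that depends only on $\alpha$, $\beta$, and $|\Omega|$.

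For $F$, I would bound its four summands separately. Plain Cauchy--Schwarz controls the divergence term, while the Lagrange-multiplier term yields to the pointwise bound $|\kdirector| \leq \sqrt{\beta}$ together with the assumption $\klambda \in \Ltwo{\Omega}$ (natural, since $\lambda$ is sought there). For the $\vec{Z}$-weighted curl term, the upper bound $\Lambda$ from Lemma \ref{USPDlemma} would furnish
\begin{equation*}
|\Ltwoinnerndim{\vec{Z}(\kdirector) \curl \kdirector}{\curl \vec{v}}{\Omega}{3}| \leq \Lambda \Ltwonormndim{\curl \kdirector}{\Omega}{3} \Ltwonormndim{\curl \vec{v}}{\Omega}{3}.
\end{equation*}
Each of these three estimates is dominated by a constant multiple of $\Hdconenorm{\vec{v}}{\Omega}$.

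The main obstacle is the trilinear summand $(K_2 - K_3) \Ltwoinner{\kdirector \cdot \curl \kdirector}{\vec{v} \cdot \curl \kdirector}{\Omega}$, in which two factors of $\curl \kdirector$ appear and which therefore cannot be handled by $L^2$-Cauchy--Schwarz alone. I would first use $|\kdirector| \leq \sqrt{\beta}$ pointwise to reduce the integrand to $\sqrt{\beta} |\vec{v}| |\curl \kdirector|^2$, and then apply H\"older's inequality with exponents $(2, 4, 4)$ to obtain
\begin{equation*}
\int_\Omega |\vec{v}| |\curl \kdirector|^2 \diff{V} \leq \Ltwonormndim{\vec{v}}{\Omega}{3} \Vert \curl \kdirector \Vert_{L^4(\Omega)^3}^2.
\end{equation*}
The regularity of $\Omega$ built into Assumption \ref{secass} (convex polyhedron or $C^{1,1}$ boundary) secures the Sobolev embedding $\Hone{\Omega} \hookrightarrow L^4(\Omega)$ in both two and three dimensions, whence $\Vert \curl \kdirector \Vert_{L^4}^2 \leq C \Honenorm{\curl \kdirector}{\Omega}^2$. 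Collecting the four bounds and noting that $\Ltwonormndim{\vec{v}}{\Omega}{3}$, $\Ltwonorm{\diverg \vec{v}}{\Omega}$, and $\Honenorm{\curl \vec{v}}{\Omega}$ are each controlled by $\Hdconenorm{\vec{v}}{\Omega}$ would yield the required continuity constant.
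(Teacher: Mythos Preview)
Your proposal is correct and follows essentially the same approach as the paper: Cauchy--Schwarz for $G$ and for the divergence, $\vec{Z}$-weighted curl, and Lagrange-multiplier summands of $F$, with the trilinear term handled by exploiting $\curl\kdirector\in\Hone{\Omega}^3$. The only cosmetic difference is in that last step: you invoke H\"older with exponents $(2,4,4)$ and the Sobolev embedding $\Hone{\Omega}\hookrightarrow L^4(\Omega)$, whereas the paper rewrites the term as $\Ltwoinnerndim{(\kdirector\cdot\curl\kdirector)\curl\kdirector}{\vec{v}}{\Omega}{3}$, applies Cauchy--Schwarz, and then cites the fact that the product of two $\Hone{\Omega}$ functions lies in $\Ltwo{\Omega}$ to bound $\Ltwonorm{\curl\kdirector\cdot\curl\kdirector}{\Omega}$. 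Since $\Ltwonorm{\curl\kdirector\cdot\curl\kdirector}{\Omega}=\Vert\curl\kdirector\Vert_{L^4}^2$, these are the same estimate in different clothing.
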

\begin{proof}
A simple application of the Cauchy-Schwarz inequality shows that $G(\gamma)$ is a bounded linear functional. 

For $F(\vec{v})$, observe that
\begin{align}
\vert F(\vec{v}) \vert &\leq K_1 \vert \Ltwoinner{\diverg \kdirector}{\diverg \vec{v}}{\Omega} \vert +  K_3 \vert \Ltwoinnerndim{\vec{A}(\kdirector) \curl \kdirector}{\curl \vec{v}}{\Omega}{3} \vert \nonumber \\
&\qquad + \vert K_2-K_3 \vert \vert \Ltwoinner{\kdirector \cdot \curl \kdirector}{\vec{v} \cdot \curl \kdirector}{\Omega} \vert  + \vert\int_{\Omega} \klambda \ltwoinner{\kdirector}{\vec{v}} \diff{V} \vert \label{Ftriangleinequalcont},
\end{align}
by the triangle inequality. Applying Cauchy-Schwarz inequalities to \eqref{Ftriangleinequalcont}, one obtains
\begin{align}
\vert F(\vec{v}) \vert & \leq K_1 \Ltwonorm{\diverg \kdirector}{\Omega} \Ltwonorm{\diverg \vec{v}}{\Omega} + K_3 \Ltwonormndim{\vec{A}(\kdirector) \curl \kdirector}{\Omega}{3} \Ltwonormndim{\curl \vec{v}}{\Omega}{3} \nonumber \\
&\qquad + \vert K_2-K_3 \vert \vert \Ltwoinner{\kdirector \cdot \curl \kdirector}{\vec{v} \cdot \curl \kdirector}{\Omega} \vert  + \Ltwonormndim{\klambda \kdirector}{\Omega}{3}\Ltwonormndim{\vec{v}}{\Omega}{3} \nonumber \\
&\leq K_1 \Ltwonorm{\diverg \kdirector}{\Omega} \Hdconenorm{\vec{v}}{\Omega} + K_3 \Ltwonormndim{\vec{A}(\kdirector) \curl \kdirector}{\Omega}{3} \Hdconenorm{\vec{v}}{\Omega} \nonumber \\
&\qquad + \vert K_2-K_3 \vert \vert \Ltwoinner{\kdirector \cdot \curl \kdirector}{\vec{v} \cdot \curl \kdirector}{\Omega} \vert  + \Ltwonormndim{\klambda \kdirector}{\Omega}{3}\Hdconenorm{\vec{v}}{\Omega}. \label{partialFinequalitycont}
\end{align}
In order to bound $\vert F(\vec{v}) \vert$, consider the final three summands separately. Note that since $\ltwonorm{\vec{A}(\kdirector)} \leq \Lambda$, where $\Lambda$ is the relevant upper bound from Lemma \ref{USPDlemma}, it is evident that 
\begin{equation}
\Ltwonormndim{\vec{A}(\kdirector)\curl \kdirector}{\Omega}{3} \leq \Lambda \Ltwonormndim{\curl \kdirector}{\Omega}{3}, \label{matrixcurltermcont}
\end{equation}
and that
\begin{align} \label{klambkdirtermcont}
\Ltwonormndim{\klambda \kdirector}{\Omega}{3}^2 &\leq \beta \int_{\Omega} \klambda^2 \diff{V} = C_{\lambda \director}^2,
\end{align}
where $\beta$ is the upper bound for \eqref{limitsonnlength}.
Finally, consider
\begin{equation*}
\vert \Ltwoinner{\kdirector \cdot \curl \kdirector}{\vec{v} \cdot \curl \kdirector}{\Omega} \vert = \vert \Ltwoinnerndim{(\kdirector \cdot \curl \kdirector) \curl \kdirector}{\vec{v}}{\Omega}{3} \vert.
\end{equation*}
Applying the Cauchy-Schwarz inequality,
\begin{align}
 \vert \Ltwoinnerndim{(\kdirector \cdot \curl \kdirector) \curl \kdirector}{\vec{v}}{\Omega}{3} \vert &\leq \Ltwonormndim{(\kdirector \cdot \curl \kdirector) \curl \kdirector}{\Omega}{3} \Ltwonormndim{\vec{v}}{\Omega}{3} \nonumber \\
 &\leq \Ltwonormndim{(\kdirector \cdot \curl \kdirector) \curl \kdirector}{\Omega}{3} \Hdconenorm{\vec{v}}{\Omega} \label{lastboundedfunctionalsummandcont}.
\end{align}
By Corollary 1.1 in \cite{Girault1}, since $\curl \kdirector \in \Hone{\Omega}^3$, $\curl \kdirector \cdot \curl \kdirector \in \Ltwo{\Omega}$. Note that
\begin{align*}
(\kdirector \cdot \curl \kdirector) \curl \kdirector \cdot (\kdirector \cdot \curl \kdirector) \curl \kdirector &= (\kdirector \cdot \curl \kdirector)^2  (\curl \kdirector \cdot \curl \kdirector) \nonumber \\
&\leq (\vert \kdirector \vert \cdot \vert \curl \kdirector \vert)^2 (\curl \kdirector \cdot \curl \kdirector) \\
&\leq \beta \cdot (\curl \kdirector \cdot \curl \kdirector)^2.
\end{align*}
Employing this in \eqref{lastboundedfunctionalsummandcont} and letting $\Ltwonorm{\curl \kdirector \cdot \curl \kdirector}{\Omega} = C_{\director \director}$,
\begin{align}
\Ltwonormndim{(\kdirector \cdot \curl \kdirector) \curl \kdirector}{\Omega}{3} &\leq \sqrt{\beta} \big( \int_{\Omega}(\curl \kdirector \cdot \curl \kdirector)^2 \diff{V} \big)^{1/2} \nonumber \\
&\leq \sqrt{\beta} C_{\director \director} \label{regularcurlcurltermcont}.
\end{align}
Therefore, using \eqref{partialFinequalitycont}-\eqref{klambkdirtermcont}, and \eqref{regularcurlcurltermcont},
\begin{align*}
\vert F(\vec{v}) \vert \leq& K_1 \Ltwonorm{\diverg \kdirector}{\Omega} \Hdconenorm{\vec{v}}{\Omega} + K_3 \Lambda \Ltwonormndim{\curl \kdirector}{\Omega}{3}\Hdconenorm{\vec{v}}{\Omega} \\ \nonumber
& + \vert K_2-K_3 \vert \sqrt{\beta} C_{\director \director}\Hdconenorm{\vec{v}}{\Omega} + C_{\lambda \director}\Hdconenorm{\vec{v}}{\Omega}.
\end{align*}
\end{proof}

\begin{lemma} \label{contbilinearformcontinuity}
Under Assumption \ref{secass}, $a(\vec{u}, \vec{v})$ and $b(\vec{v}, \gamma)$ are continuous for the norms $\Hdconenorm{\cdot}{\Omega}$ and $\Ltwonorm{\cdot}{\Omega}$.
\end{lemma}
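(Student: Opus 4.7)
The overall plan is to attack each summand of $a(\vec{u}, \vec{v})$ and $b(\vec{v}, \gamma)$ by a Cauchy--Schwarz application, and then to control each resulting factor by the appropriate norm via the pointwise bound $\ltwoinner{\kdirector}{\kdirector} \leq \beta$ from Assumption \ref{secass}, the USPD upper bound $\Lambda$ on $\vec{Z}(\kdirector)$ from Lemma \ref{USPDlemma}, and standard Sobolev embeddings available on the regular domain assumed. Continuity of $b(\vec{v}, \gamma) = \int_{\Omega} \gamma \, \ltwoinner{\kdirector}{\vec{v}} \diff{V}$ is immediate: applying Cauchy--Schwarz in $\Ltwo{\Omega}$ and the length bound yields $|b(\vec{v}, \gamma)| \leq \sqrt{\beta}\, \Ltwonorm{\gamma}{\Omega} \Hdconenorm{\vec{v}}{\Omega}$, since the $\Hdcone$ norm dominates the $\Ltwo{\Omega}$ norm.

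For $a(\vec{u}, \vec{v})$, the $K_1$ divergence term and the $K_3$ curl--$\vec{Z}$--curl term are dispatched directly by Cauchy--Schwarz combined with the bound $\Ltwonormndim{\vec{Z}(\kdirector)\curl \vec{u}}{\Omega}{3} \leq \Lambda \Ltwonormndim{\curl \vec{u}}{\Omega}{3}$ from Lemma \ref{USPDlemma}, and the $\klambda$ term is bounded by $\Linfinitynorm{\klambda}{\Omega} \Hdconenorm{\vec{u}}{\Omega} \Hdconenorm{\vec{v}}{\Omega}$ under the natural regularity that $\klambda$ is bounded at each Newton iterate. The main obstacle is the collection of five cross terms with the $(K_2 - K_3)$ coefficient, each of the form $\int_\Omega (\vec{w}_1 \cdot \curl \vec{w}_2)(\vec{w}_3 \cdot \curl \vec{w}_4) \diff{V}$ with the $\vec{w}_i$ drawn from $\{\vec{u}, \vec{v}, \kdirector\}$. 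These integrands are essentially quartic in the arguments, and they cannot be controlled solely by an $L^2$ Cauchy--Schwarz against the $H(\text{div}) \cap H(\text{curl})$ norm; they require the extra regularity built into $\Hdcone$.

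To handle those cross terms, I would first apply the pointwise Cauchy--Schwarz inequality to each integrand and pull out the factor $|\kdirector| \leq \sqrt{\beta}$ whenever $\kdirector$ appears, leaving integrals of products of the form $|\vec{w}| |\curl \vec{w}'| |\curl \kdirector|$ (terms one through four) or $|\vec{u}| |\vec{v}| |\curl \kdirector|^2$ (term five). At this point, the key tool is the Sobolev embedding $\Hone{\Omega} \hookrightarrow L^p(\Omega)$, valid on a convex polyhedron or $C^{1,1}$ domain, with $p=6$ in three dimensions and any $p < \infty$ in two dimensions. Since $\curl \vec{v}, \curl \kdirector \in \Hone{\Omega}^3$ by definition of $\Hdcone$, applying H\"older's inequality with exponents $(2, 6, 3)$ to integrals of the first type and a four-factor $(2, 6, 6, 6)$ split to the fifth term absorbs each factor into a bounded multiple of $\Ltwonormndim{\vec{u}}{\Omega}{3}$, $\Hnnorm{\curl \vec{v}}{1}$, and $\Hnnorm{\curl \kdirector}{1}$. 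For the term five estimate, an additional appeal to the embedding $\Hdivnot{\Omega} \cap \Hcurlnot{\Omega} \hookrightarrow \Hone{\Omega}^3$ on the regular domains under consideration provides $\vec{u}, \vec{v} \in L^6(\Omega)^3$ with $\|\vec{u}\|_{L^6} \leq C \Hdconenorm{\vec{u}}{\Omega}$. Collecting all of the constants yields $|a(\vec{u}, \vec{v})| \leq C \Hdconenorm{\vec{u}}{\Omega} \Hdconenorm{\vec{v}}{\Omega}$, with $C$ depending on the $K_i$, $\beta$, $\Lambda$, $\Linfinitynorm{\klambda}{\Omega}$, $\Hdconenorm{\kdirector}{\Omega}$, and the Sobolev embedding constant; the principal work is book-keeping of exponents rather than any single deep estimate.
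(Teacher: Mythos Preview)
Your approach is essentially the same as the paper's: triangle inequality to isolate summands, Cauchy--Schwarz on each, the USPD bound on $\vec{Z}$, the embedding $\Hdivnot{\Omega}\cap\Hcurlnot{\Omega}\hookrightarrow \Hone{\Omega}^3$ on convex polyhedra or $C^{1,1}$ domains, and Sobolev product estimates. Where you write out explicit H\"older exponents $(2,6,3)$ and $(2,6,6,6)$ using $H^1\hookrightarrow L^6$, the paper instead cites the packaged statement (Corollary~1.1 of Girault--Raviart) that pointwise multiplication $\Hone{\Omega}^3\times\Hone{\Omega}^3\to\Ltwo{\Omega}$ is continuous and bilinear, then feeds in the $H^1$ embedding. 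These are the same ingredients, organized slightly differently.

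There is one genuine discrepancy worth flagging: for the $\klambda$ term you invoke $\Linfinitynorm{\klambda}{\Omega}$, calling it ``natural regularity,'' but Assumption~\ref{secass} only places $\klambda$ in $\Ltwo{\Omega}$, and the lemma statement does not add any $L^\infty$ hypothesis. The paper avoids this by writing
\[
\Big|\int_\Omega \klambda\,\ltwoinner{\vec{u}}{\vec{v}}\,\diff{V}\Big|\leq \Ltwonorm{\klambda}{\Omega}\,\Ltwonorm{\vec{u}\cdot\vec{v}}{\Omega}\leq C\,\Ltwonorm{\klambda}{\Omega}\,\Honenorm{\vec{u}}{\Omega}\Honenorm{\vec{v}}{\Omega},
\]
again via the $H^1\times H^1\to L^2$ product bound (equivalently, H\"older $(2,4,4)$ and $H^1\hookrightarrow L^4$), and then the $\Hdcone$ embedding. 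Your argument is easily repaired by the same device, but as written it introduces an assumption not present in the hypotheses.
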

\begin{proof}
First consider
\begin{align*}
\vert b(\vec{v}, \gamma) \vert &= \vert \int_{\Omega} \gamma \ltwoinner{\vec{v}}{\kdirector} \diff{V} \vert \nonumber \\
&\leq \Ltwonorm{\gamma}{\Omega} \Ltwonorm{\vec{v} \cdot \kdirector}{\Omega} \nonumber \\
&\leq \Ltwonorm{\gamma}{\Omega} \sqrt{\beta} \Ltwonorm{\vec{v}}{\Omega},
\end{align*}
by H\"{o}lder's inequality and \eqref{limitsonnlength}. Therefore, $b(\vec{v}, \gamma)$ is a continuous bilinear form.

For the continuity of $a(\vec{u}, \vec{v})$, observe that
\begin{align}
\vert a(\vec{u}, \vec{v}) \vert \leq & K_1 \vert \Ltwoinner{\diverg \vec{u}}{\diverg \vec{v}}{\Omega} \vert + K_3 \vert \Ltwoinnerndim{\vec{A}(\kdirector) \curl \vec{u}}{\curl \vec{v}}{\Omega}{3} \vert \nonumber \\
& + \vert K_2-K_3 \vert \Big( \vert \Ltwoinner{\vec{u} \cdot \curl \vec{v}}{\kdirector \cdot \curl \kdirector}{\Omega} \vert+\vert\Ltwoinner{\kdirector \cdot \curl \vec{v}}{\vec{u} \cdot \curl \kdirector}{\Omega} \vert \nonumber \\
& + \vert \Ltwoinner{\kdirector \cdot \curl \kdirector}{\vec{v} \cdot \curl \vec{u}}{\Omega} \vert + \vert \Ltwoinner{\kdirector \cdot \curl \vec{u}}{\vec{v} \cdot \curl \kdirector}{\Omega} \vert \nonumber \\
& + \vert \Ltwoinner{\vec{u} \cdot \curl \kdirector}{\vec{v} \cdot \curl \kdirector}{\Omega} \vert \Big) + \vert \int_{\Omega} \klambda \ltwoinner{\vec{u}}{\vec{v}} \diff{V} \vert \label{triangleauvinequalitycont},
\end{align}
by the triangle inequality. For simplicity, consider the components of the sum above. Note that
\begin{equation} 
\vert \Ltwoinner{\diverg \vec{u}}{\diverg \vec{v}}{\Omega} \vert \leq \Ltwonorm{\diverg \vec{u}}{\Omega} \Ltwonorm{\diverg \vec{v}}{\Omega} \leq \Hdconenorm{ \vec{u}}{\Omega} \Hdconenorm{ \vec{v}}{\Omega}. \label{divergenceauvcontineqcont}
\end{equation}
Considering $\vert \Ltwoinnerndim{\vec{A}(\kdirector) \curl \vec{u}}{\curl \vec{v}}{\Omega}{3} \vert$, using \eqref{matrixcurltermcont} implies that
\begin{align}
\vert \Ltwoinnerndim{\vec{A}(\kdirector) \curl \vec{u}}{\curl \vec{v}}{\Omega}{3} \vert &\leq \Ltwonormndim{\curl \vec{v}}{\Omega}{3} \Ltwonormndim{\vec{A}(\kdirector)\curl \vec{u}}{\Omega}{3} \nonumber \\
&\leq \Lambda \Hdconenorm{\vec{v}}{\Omega}  \Ltwonormndim{\curl \vec{u}}{\Omega}{3} \nonumber \\
& \leq \Lambda \Hdconenorm{\vec{v}}{\Omega}  \Hdconenorm{\vec{u}}{\Omega}. \label{Amatrixauvcontineqcont}
\end{align}

From the imbedding in Lemma 2.5 of \cite{Girault1}, if $\Omega$ is a convex polyhedron or has a $C^{1,1}$ boundary, then for any $\vec{w} \in \Hdivnot{\Omega} \cap \Hcurlnot{\Omega}$ there exists a $C_{\text{imb}}>0$ such that
\begin{equation*}
\Honenorm{\vec{w}}{\Omega}^2 \leq C_{\text{imb}} \Hdconenorm{\vec{w}}{\Omega}^2.
\end{equation*}
Furthermore, $\vec{w} \in \Honenot{\Omega}^3$ by \cite[Lemma 2.5]{Girault1}. Consider $\vert \Ltwoinner{\vec{u} \cdot \curl \vec{v}}{\kdirector \cdot \curl \kdirector}{\Omega} \vert$ from \eqref{triangleauvinequalitycont}. By Corollary 1.1 in \cite{Girault1}, the map $\vec{u} \cdot \curl \vec{v}$ is a \emph{continuous} bilinear map, $\Hone{\Omega}^3 \times \Hone{\Omega}^3 \to \Ltwo{\Omega}$. Therefore, there exists a $C_{\text{con}} >0$ such that
\begin{equation*}
\Ltwonorm{\vec{u} \cdot \curl \vec{v}}{\Omega} \leq C_{\text{con}} \Honenorm{\vec{u}}{\Omega} \Honenorm{\curl \vec{v}}{\Omega}.
\end{equation*}
By the Cauchy-Schwarz inequality
\begin{equation}
\vert \Ltwoinner{\vec{u} \cdot \curl \vec{v}}{\kdirector \cdot \curl \kdirector}{\Omega} \vert \leq \Ltwonorm{\vec{u} \cdot \curl \vec{v}}{\Omega} \Ltwonorm{\kdirector \cdot \curl \kdirector}{\Omega} \label{csk3kappafirstcont}.
\end{equation}
Let $C' = C_{\text{con}} C_{\text{imb}}$ and note that
\begin{align}
\Ltwonorm{\vec{u} \cdot \curl \vec{v}}{\Omega} &\leq C_{\text{con}} \Honenorm{\vec{u}}{\Omega} \Honenorm{\curl \vec{v}}{\Omega} \label{ucurlvcontinuitycont} \\
&\leq C' \Hdconenorm{\vec{u}}{\Omega} \Hdconenorm{\vec{v}}{\Omega} \label{ucurlvembiddingcont},
\end{align}
where \eqref{ucurlvcontinuitycont} is given by continuity and \eqref{ucurlvembiddingcont} is given by the imbedding. Hence,
\begin{align}
\vert \Ltwoinner{\vec{u} \cdot \curl \vec{v}}{\kdirector \cdot \curl \kdirector}{\Omega} \vert &\leq C' \Hdconenorm{\vec{u}}{\Omega} \Hdconenorm{\vec{v}}{\Omega} \Ltwonorm{\kdirector \cdot \curl \kdirector}{\Omega} \nonumber \\
& \leq C' \sqrt{\beta} \Ltwonormndim{\curl \kdirector}{\Omega}{3} \Hdconenorm{\vec{u}}{\Omega} \Hdconenorm{\vec{v}}{\Omega} \label{ucurlvinftyinequalcont}.
\end{align}

The next summand from \eqref{triangleauvinequalitycont} is
\begin{equation*} 
\vert\Ltwoinner{\kdirector \cdot \curl \vec{v}}{\vec{u} \cdot \curl \kdirector}{\Omega} \vert \leq \Ltwonorm{\kdirector \cdot \curl \vec{v}}{\Omega} \Ltwonorm{\vec{u} \cdot \curl \kdirector}{\Omega}.
\end{equation*}
Again bound
\begin{align*}
 \Ltwonorm{\kdirector \cdot \curl \vec{v}}{\Omega}& \leq \sqrt{\beta} \Hdconenorm{\vec{v}}{\Omega}.
\end{align*}
Since $\vec{u} \in \Honenot{\Omega}^3$ and $\curl \kdirector \in \Hone{\Omega}^3$,
\begin{align*}
\Ltwonorm{\vec{u} \cdot \curl \kdirector}{\Omega} &\leq C_{\text{con}}\Honenorm{\vec{u}}{\Omega} \Honenorm{\curl \kdirector}{\Omega} \nonumber \\
& \leq C' \Hdconenorm{\vec{u}}{\Omega}\Honenorm{\curl \kdirector}{\Omega}.
\end{align*}
Therefore,
\begin{equation} \label{secondk3kappacontineqcont}
\vert\Ltwoinner{\kdirector \cdot \curl \vec{v}}{\vec{u} \cdot \curl \kdirector}{\Omega} \vert \leq \sqrt{\beta} C' \Honenorm{\curl \kdirector}{\Omega} \Hdconenorm{\vec{u}}{\Omega}\Hdconenorm{\vec{v}}{\Omega}.
\end{equation}
Now consider $\vert \Ltwoinner{\kdirector \cdot \curl \kdirector}{\vec{v} \cdot \curl \vec{u}}{\Omega} \vert$ and note that this inner product is the same as that in \eqref{csk3kappafirstcont} with the roles of $\vec{u}$ and $\vec{v}$ reversed. Since $\vec{u}$ and $\vec{v}$ are from the same space, the steps for deriving \eqref{ucurlvinftyinequalcont} are equally valid. Thus,
\begin{equation}
\vert \Ltwoinner{\kdirector \cdot \curl \kdirector}{\vec{v} \cdot \curl \vec{u}}{\Omega} \vert\leq C' \sqrt{\beta} \Ltwonormndim{\curl \kdirector}{\Omega}{3} \Hdconenorm{\vec{u}}{\Omega} \Hdconenorm{\vec{v}}{\Omega} \label{thirdk3kappacontineqcont}.
\end{equation}
Similarly, the inequality for $\vert \Ltwoinner{\kdirector \cdot \curl \vec{u}}{\vec{v} \cdot \curl \kdirector}{\Omega} \vert$ is derived in an analogous manner to that of \eqref{secondk3kappacontineqcont}. Thus,
\begin{equation}
\vert \Ltwoinner{\kdirector \cdot \curl \vec{u}}{\vec{v} \cdot \curl \kdirector}{\Omega} \vert \leq \sqrt{\beta} C' \Honenorm{\curl \kdirector}{\Omega} \Hdconenorm{\vec{u}}{\Omega}\Hdconenorm{\vec{v}}{\Omega} \label{forthk3kappacontineqcont}.
\end{equation}

Next, examine
\begin{equation*}
\vert \Ltwoinner{\vec{u} \cdot \curl \kdirector}{\vec{v} \cdot \curl \kdirector}{\Omega} \vert \leq \Ltwonorm{\vec{u} \cdot \curl \kdirector}{\Omega} \Ltwonorm{\vec{v} \cdot \curl \kdirector}{\Omega}.
\end{equation*}
Since $\curl \kdirector \in \Hone{\Omega}^3$,
\begin{align*}
\Ltwonorm{\vec{u} \cdot \curl \kdirector}{\Omega} \leq C_{\text{con}} \Honenorm{\vec{u}}{\Omega} \Honenorm{\curl \kdirector}{\Omega} \leq C' \Hdconenorm{\vec{u}}{\Omega} \Honenorm{\curl \kdirector}{\Omega} ,\\
 \Ltwonorm{\vec{v} \cdot \curl \kdirector}{\Omega} \leq C_{\text{con}} \Honenorm{\vec{v}}{\Omega} \Honenorm{\curl \kdirector}{\Omega} \leq C' \Hdconenorm{\vec{v}}{\Omega} \Honenorm{\curl \kdirector}{\Omega}.
\end{align*}
Thus,
\begin{equation}
\vert \Ltwoinner{\vec{u} \cdot \curl \kdirector}{\vec{v} \cdot \curl \kdirector}{\Omega} \vert \leq (C')^2 \Honenorm{\curl \kdirector}{\Omega}^2 \Hdconenorm{\vec{u}}{\Omega}\Hdconenorm{\vec{v}}{\Omega} \label{k3kappafiftcontineqcont}.
\end{equation}
Finally,
\begin{align}
\vert \int_{\Omega} \klambda \ltwoinner{\vec{u}}{\vec{v}} \diff{V} \vert &\leq \Ltwonorm{\klambda}{\Omega} \Ltwonorm{\vec{u} \cdot \vec{v}}{\Omega} \nonumber \\
&\leq \Ltwonorm{\klambda}{\Omega} C_{\text{con}} \Honenorm{\vec{u}}{\Omega} \Honenorm{\vec{v}}{\Omega}\nonumber \\
& \leq \Ltwonorm{\klambda}{\Omega} C'C_{\text{imb}} \Hdconenorm{\vec{u}}{\Omega} \Hdconenorm{\vec{v}}{\Omega} \label{lambdaimbedineqcont}.
\end{align}
Combining \eqref{divergenceauvcontineqcont}, \eqref{Amatrixauvcontineqcont}, and \eqref{ucurlvinftyinequal}-\eqref{lambdaimbedineqcont},
\begin{align*}
a(\vec{u},\vec{v}) &\leq \Big( K_1 + K_3 \Lambda + \vert K_2-K_3 \vert \big( 2C' \sqrt{\beta} \Ltwonormndim{\curl \kdirector}{\Omega}{3}+ 2\sqrt{\beta} C' \Honenorm{\curl \kdirector}{\Omega} \nonumber \\
&\qquad + (C')^2 \Honenorm{\curl \kdirector}{\Omega}^2\big) + \Ltwonorm{\klambda}{\Omega} C'C_{\text{imb}} \Big) \Hdconenorm{\vec{u}}{\Omega} \Hdconenorm{\vec{v}}{\Omega}.
\end{align*}
\end{proof}

The auxiliary regularity above poses a number of theoretical problems. For the well-posedness of the continuum system, coercivity and weak coercivity must be shown in the more intricate $\Hdconespace$ norm. Moreover, conforming finite elements for this space, such as Bogner-Fox-Schmit elements \cite{Bogner1}, are undesirably cumbersome and present notable difficulties in demonstrating stability for this linearization system. However, in the discrete setting, results guaranteeing the existence and uniqueness of solutions to the discrete Newton systems at each step are attained under less strict assumptions.

\subsection{Discrete System Preliminaries}

Performing the outlined Newton iterations for free elastic effects necessitates solving the above Newton systems for the update functions $\ddirector$ and $\dlambda$. Thus, finite elements are used to numerically approximate the updates. Finite dimensional spaces $V_h \subset \Hdcnot$ and $\Pi_h \subset \Ltwo{\Omega}$ are considered, yielding the discrete variational problem
\begin{align}
\vec{a}(\ddirector_h, \vec{v}_h) + b(\vec{v}_h, \dlambda_h) &= \vec{F}(\vec{v}_h),& &\forall \vec{v}_h \in V_h, \label{generalizedNewtoniterationweakform1} \\
b(\ddirector_h, \gamma_h) &= G(\gamma_h),& &\forall \gamma_h \in \Pi_h. \label{generalizedNewtoniterationweakform2}
\end{align}
Note that Assumption \ref{secass} implies that $\ddirector_h$ and $\vec{v}_h$ are also elements of $\Hdivnot{\Omega} \cap \Hcurlnot{\Omega}$. Throughout the rest of this section, the developed theory applies exclusively to discrete spaces. Therefore, except when necessary for clarity, we drop the subscript $h$ along with the notation $\ddirector$ and $\dlambda$. For instance, we write $a(\vec{u}, \vec{v})$ to indicate the bilinear form in \eqref{auvform} operating on the discrete space $V_h \times V_h$. 

The existence and uniqueness theory in the following subsections is explicitly developed in the presence of full Dirichlet boundary conditions. However, the theory is equally applicable for a rectangular domain with mixed Dirichlet and periodic boundary conditions. Such a domain is considered in the numerical experiments presented herein.

Let $\{\triangulation \}$, $0< h \leq 1$, be a family of quadrilateral subdivisions of $\Omega$, such that
\begin{equation} 
\max \{\diam T : T \in \triangulation \} \leq h\ \diam \Omega. \label{maxuniformity}
\end{equation}
Further, assume that $\{\triangulation \}$ is quasi-uniform so that there exists a $\rho > 0$, such that
\begin{equation} \label{quasiuniform}
\min \{\diam B_T : T \in \triangulation \} \geq \rho\ h\ \diam \Omega,
\end{equation}
for all $h \in (0, 1]$, where $B_T$ is the largest ball contained in $T$, such that $T$ is star-shaped with respect to $B_T$ \cite{Brenner1}. Denote the measure of $T \in \triangulation$ as $\vert T \vert$. Furthermore, let $Q_p$ denote piecewise $C^0$ polynomials of degree $p \geq 1$ on $\triangulation$ and $P_0$ denote the space of piecewise constants on $\triangulation$. Next, define a bubble space 
\begin{equation*}
V_h^b = \{\vec{v} \in C_c(\Omega)^3 : \vec{v}|_{T} =a_T b_T \kdirector |_T, \forall T \in \triangulation \},
\end{equation*}
where $C_c(\Omega)$ denotes the space of compactly supported continuous functions on $\Omega$, $b_T$ is the quadratic bubble function \cite{Mourad1} that vanishes on $\partial T \in \triangulation$, and $a_T$ is a constant coefficient associated with $b_T$. The bubble functions are constructed \cite{Pierre1}, such that
\begin{align} 
\int_T b_T \diff{V} &= 1,& &\forall T \in \triangulation, \label{bubbleprop1} \\
b_T &> 0,& &\forall \vec{x} \in T \label{bubbleprop2}.
\end{align}
Then, we consider the pair of spaces
\begin{align}
\Pi_h &= P_0, \label{pispace} \\
V_h &= \{ \vec{v} \in Q_m \times Q_m \times Q_m \oplus V_h^b : \vec{v} = \vec{0} \text{ on } \partial \Omega \}. \label{vspace}
\end{align}

In the following sections, to demonstrate the existence and uniqueness of solutions to the system given by \eqref{generalizedNewtoniterationweakform1} and \eqref{generalizedNewtoniterationweakform2}, we show that $a(\vec{u}, \vec{v})$ is a coercive and continuous bilinear form and that $b(\vec{v}, \gamma)$ is a continuous and weakly coercive bilinear form \cite{Brenner1, Boffi1, Braess1, Babuska1} for the above spaces, $V_h$ and $\Pi_h$. Throughout the remainder of this section, we further assume that $\kdirector \in Q_p$, for some $p \geq 1$, so that $V_h \subset Q_l \times Q_l \times Q_l$ for $l = \max (m, p+2)$.

\subsection{Discrete Continuity}

In this section, we show that the right hand sides of \eqref{generalizedNewtoniterationweakform1} and \eqref{generalizedNewtoniterationweakform2} are continuous linear functionals and that the bilinear forms $a(\vec{u}, \vec{v})$ and $b(\vec{v}, \gamma)$ are continuous for the assumptions discussed above.
\begin{lemma} \label{boundedlinearforms}
Under Assumption \ref{secass}, $F$ and $G$ are bounded linear functionals on $V_h$ and $\Pi_h$, respectively.
\end{lemma}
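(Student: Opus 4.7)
The plan is to follow the structure of the continuum proof (Lemma~\ref{contboundedlinearforms}), but to avoid the Sobolev embedding $\Hdcone \hookrightarrow \Hone{\Omega}$ used there, since we are no longer assuming the auxiliary regularity $\curl\kdirector \in \Hone{\Omega}^3$. Instead, I would rely on three discrete-setting ingredients: the pointwise length bound \eqref{limitsonnlength} on $\kdirector$, the uniform upper bound $\ltwonorm{\vec{Z}(\kdirector)}\le \Lambda$ from Lemma~\ref{USPDlemma}, and the fact that $\kdirector \in Q_p$ on the mesh $\triangulation$ forces $\curl\kdirector$ to be piecewise polynomial, hence an element of $L^\infty(\Omega)$. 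The constants obtained may depend on $\kdirector$, $\klambda$, the elastic constants, and the mesh, but must be independent of $\vec{v}$ and $\gamma$, so that $|F(\vec{v})|\le C\Hdcnorm{\vec{v}}{\Omega}$ and $|G(\gamma)|\le C\Ltwonorm{\gamma}{\Omega}$.

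The functional $G$ is immediate: a direct Cauchy-Schwarz gives $|G(\gamma)| \le \tfrac{1}{2}\Ltwonorm{\gamma}{\Omega}\Ltwonorm{\ltwoinner{\kdirector}{\kdirector}-1}{\Omega}$, and Assumption~\ref{secass} bounds $|\ltwoinner{\kdirector}{\kdirector}-1|$ pointwise by $\max(\beta-1,\,1-\alpha)$, so the second factor is at most $\max(\beta-1,1-\alpha)|\Omega|^{1/2}$.

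For $F(\vec{v})$, I would first apply the triangle inequality to split into four summands. Three of them are handled essentially as in the continuum: Cauchy-Schwarz and $\Ltwonorm{\diverg\vec{v}}{\Omega}\le \Hdcnorm{\vec{v}}{\Omega}$ dispose of the $K_1$ term; Cauchy-Schwarz together with $\Ltwonormndim{\vec{Z}(\kdirector)\curl\kdirector}{\Omega}{3}\le \Lambda\Ltwonormndim{\curl\kdirector}{\Omega}{3}$ dispose of the $K_3$ term; and the $\klambda$ term is controlled by $\sqrt{\beta}\,\Ltwonorm{\klambda}{\Omega}\,\Hdcnorm{\vec{v}}{\Omega}$ using the pointwise bound on $\ltwonorm{\kdirector}$, in direct analogy with \eqref{klambkdirtermcont}.

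The main obstacle, and the only place where the argument genuinely departs from the continuum version, is the $(K_2-K_3)$ summand $\Ltwoinner{\kdirector\cdot\curl\kdirector}{\vec{v}\cdot\curl\kdirector}{\Omega}$, which the continuum proof dispatched using the continuity of $\Hone{\Omega}^3 \times \Hone{\Omega}^3 \to \Ltwo{\Omega}$ together with the imbedding. Here I would instead rewrite it as $\Ltwoinnerndim{(\kdirector\cdot\curl\kdirector)\curl\kdirector}{\vec{v}}{\Omega}{3}$, apply Cauchy-Schwarz, and then estimate the new factor pointwise by $|\kdirector|\,|\curl\kdirector|^2 \le \sqrt{\beta}\,|\curl\kdirector|^2$, obtaining
\[
\Ltwonormndim{(\kdirector\cdot\curl\kdirector)\curl\kdirector}{\Omega}{3} \le \sqrt{\beta}\,\Linfinitynorm{\curl\kdirector}{\Omega}\Ltwonormndim{\curl\kdirector}{\Omega}{3}.
\]
The crucial observation is that $\Linfinitynorm{\curl\kdirector}{\Omega}$ is finite because $\curl\kdirector$ is piecewise polynomial on $\triangulation$; this replaces the Sobolev embedding of the continuum argument. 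Combining the four bounds yields $|F(\vec{v})|\le C\Hdcnorm{\vec{v}}{\Omega}$ for some $C$ independent of $\vec{v}$, which together with the evident linearity of $F$ and $G$ completes the proof.
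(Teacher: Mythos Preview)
Your proposal is correct and follows essentially the same approach as the paper: the same triangle-inequality decomposition of $F$, the same treatment of the $K_1$, $K_3$, and $\klambda$ terms, and the same rewriting of the $(K_2-K_3)$ term as $\Ltwoinnerndim{(\kdirector\cdot\curl\kdirector)\curl\kdirector}{\vec{v}}{\Omega}{3}$ followed by Cauchy--Schwarz and the observation that $\curl\kdirector$ is piecewise polynomial. The only cosmetic difference is that the paper bounds the resulting factor by $\sqrt{\beta}\,\Ltwonorm{\curl\kdirector\cdot\curl\kdirector}{\Omega}$ (using that $|\curl\kdirector|^2\in L^2$), whereas you bound it by $\sqrt{\beta}\,\Linfinitynorm{\curl\kdirector}{\Omega}\Ltwonormndim{\curl\kdirector}{\Omega}{3}$; both are immediate consequences of the piecewise-polynomial structure and neither is materially stronger.
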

\begin{proof}
A simple application of the Cauchy-Schwarz inequality shows that $G(\gamma)$ is a bounded linear functional. 

For $F(\vec{v})$, observe that
\begin{align}
\vert F(\vec{v}) \vert &\leq  K_1 \vert \Ltwoinner{\diverg \kdirector}{\diverg \vec{v}}{\Omega} \vert +  K_3 \vert \Ltwoinnerndim{\vec{Z}(\kdirector) \curl \kdirector}{\curl \vec{v}}{\Omega}{3} \vert \nonumber \\
&\qquad + \vert K_2-K_3 \vert \vert \Ltwoinner{\kdirector \cdot \curl \kdirector}{\vec{v} \cdot \curl \kdirector}{\Omega} \vert  + \left \vert \int_{\Omega} \klambda \ltwoinner{\kdirector}{\vec{v}} \diff{V} \right \vert \label{Ftriangleinequal},
\end{align}
by the triangle inequality. Applying Cauchy-Schwarz inequalities to \eqref{Ftriangleinequal}, one obtains
\begin{align}
\vert F(\vec{v}) \vert & \leq K_1 \Ltwonorm{\diverg \kdirector}{\Omega} \Ltwonorm{\diverg \vec{v}}{\Omega} +  K_3 \Ltwonormndim{\vec{Z}(\kdirector) \curl \kdirector}{\Omega}{3} \Ltwonormndim{\curl \vec{v}}{\Omega}{3} \nonumber \\
&\qquad + \vert K_2-K_3 \vert \vert \Ltwoinner{\kdirector \cdot \curl \kdirector}{\vec{v} \cdot \curl \kdirector}{\Omega} \vert  + \Ltwonormndim{\klambda \kdirector}{\Omega}{3}\Ltwonormndim{\vec{v}}{\Omega}{3} \nonumber \\
&\leq K_1 \Ltwonorm{\diverg \kdirector}{\Omega} \Hdcnorm{\vec{v}}{\Omega} +  K_3 \Ltwonormndim{\vec{Z}(\kdirector) \curl \kdirector}{\Omega}{3} \Hdcnorm{\vec{v}}{\Omega} \nonumber \\
&\qquad + \vert K_2-K_3 \vert \vert \Ltwoinner{\kdirector \cdot \curl \kdirector}{\vec{v} \cdot \curl \kdirector}{\Omega} \vert  + \Ltwonormndim{\klambda \kdirector}{\Omega}{3}\Hdcnorm{\vec{v}}{\Omega}. \label{partialFinequality}
\end{align}
In order to bound $\vert F(\vec{v}) \vert$, consider the final three summands separately. Note that since $\ltwonorm{\vec{Z}(\kdirector)} \leq \Lambda$, where $\Lambda$ is the relevant upper bound from Lemma \ref{USPDlemma}, it is evident that 
\begin{equation}
\Ltwonormndim{\vec{Z}(\kdirector)\curl \kdirector}{\Omega}{3} \leq \Lambda \Ltwonormndim{\curl \kdirector}{\Omega}{3}, \label{matrixcurlterm}
\end{equation}
and that
\begin{align}
\Ltwonormndim{\klambda \kdirector}{\Omega}{3}^2 &\leq \beta \int_{\Omega} \klambda^2 \diff{V} = C_1^2 \label{klambkdirterm},
\end{align}
where $\beta$ is the upper bound in \eqref{limitsonnlength}.
Finally, consider
\begin{equation*}
\vert \Ltwoinner{\kdirector \cdot \curl \kdirector}{\vec{v} \cdot \curl \kdirector}{\Omega} \vert = \vert \Ltwoinnerndim{(\kdirector \cdot \curl \kdirector) \curl \kdirector}{\vec{v}}{\Omega}{3} \vert.
\end{equation*}
Applying the Cauchy-Schwarz inequality,
\begin{align}
 \vert \Ltwoinnerndim{(\kdirector \cdot \curl \kdirector) \curl \kdirector}{\vec{v}}{\Omega}{3} \vert &\leq \Ltwonormndim{(\kdirector \cdot \curl \kdirector) \curl \kdirector}{\Omega}{3} \Ltwonormndim{\vec{v}}{\Omega}{3} \nonumber \\
 &\leq \Ltwonormndim{(\kdirector \cdot \curl \kdirector) \curl \kdirector}{\Omega}{3} \Hdcnorm{\vec{v}}{\Omega} \label{lastboundedfunctionalsummand}.
\end{align}
Next, note that
\begin{align}
(\kdirector \cdot \curl \kdirector) \curl \kdirector \cdot (\kdirector \cdot \curl \kdirector) \curl \kdirector &= (\kdirector \cdot \curl \kdirector)^2  (\curl \kdirector \cdot \curl \kdirector) \nonumber \\
&\leq (\vert \kdirector \vert \cdot \vert \curl \kdirector \vert)^2 (\curl \kdirector \cdot \curl \kdirector) \nonumber \\
&\leq \beta \cdot (\curl \kdirector \cdot \curl \kdirector)^2 \label{curlcurlinequal}.
\end{align}
Furthermore, $\curl \kdirector$ is a vector of piecewise polynomials. Therefore, $\curl \kdirector \cdot \curl \kdirector \in \Ltwo{\Omega}$. Employing \eqref{curlcurlinequal} and letting $\Ltwonorm{\curl \kdirector \cdot \curl \kdirector}{\Omega} = C_2$,
\begin{align}
\Ltwonormndim{(\kdirector \cdot \curl \kdirector) \curl \kdirector}{\Omega}{3} &\leq \sqrt{\beta} \big( \int_{\Omega}(\curl \kdirector \cdot \curl \kdirector)^2 \diff{V} \big)^{1/2} \nonumber \\
&\leq \sqrt{\beta} C_2 \label{regularcurlcurlterm}.
\end{align}
Therefore, using \eqref{partialFinequality}-\eqref{lastboundedfunctionalsummand}, and \eqref{regularcurlcurlterm},
\begin{align*}
\vert F(\vec{v}) \vert \leq& K_1 \Ltwonorm{\diverg \kdirector}{\Omega} \Hdcnorm{\vec{v}}{\Omega} + K_3 \Lambda \Ltwonormndim{\curl \kdirector}{\Omega}{3}\Hdcnorm{\vec{v}}{\Omega} \\ \nonumber
& + \vert K_2-K_3 \vert \sqrt{\beta} C_2\Hdcnorm{\vec{v}}{\Omega} + C_1\Hdcnorm{\vec{v}}{\Omega},
\end{align*}
implying $F(\vec{v})$ is a bounded linear functional on $V_h$.
\end{proof}

\begin{lemma} \label{bilinearformcontinuity}
Under Assumption \ref{secass}, $a(\vec{u}, \vec{v})$ and $b(\vec{v}, \gamma)$ are continuous.
\end{lemma}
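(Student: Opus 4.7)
The plan is to mirror the structure of Lemma \ref{contbilinearformcontinuity}, but exploit the fact that in the discrete setting $\kdirector \in Q_p$ is a piecewise polynomial on a fixed triangulation, so that $\kdirector$ and $\curl \kdirector$ lie in $L^\infty(\Omega)$. Likewise $\klambda \in P_0 \subset L^\infty(\Omega)$. This removes the need for the $\Hdcone$ regularity and the continuous bilinear map $\Hone{\Omega}^3 \times \Hone{\Omega}^3 \to \Ltwo{\Omega}$ that drove the continuum argument, so continuity can be established in the natural norm $\Hdcnorm{\cdot}{\Omega}$.

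For $b(\vec{v}, \gamma)$, a single Cauchy-Schwarz inequality combined with the pointwise bound $\ltwonorm{\kdirector}^2 \leq \beta$ from \eqref{limitsonnlength} immediately gives $\vert b(\vec{v}, \gamma) \vert \leq \sqrt{\beta}\, \Ltwonorm{\gamma}{\Omega}\, \Hdcnorm{\vec{v}}{\Omega}$, so the continuity of $b$ is routine.

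For $a(\vec{u}, \vec{v})$, I would apply the triangle inequality in the same way as \eqref{triangleauvinequalitycont}, splitting the form into the divergence term, the $\vec{Z}$-curl term, the five $(K_2 - K_3)$ cross terms, and the $\klambda$ mass term. The divergence term is handled by direct Cauchy-Schwarz; the $\vec{Z}$-curl term by Cauchy-Schwarz combined with $\ltwonorm{\vec{Z}(\kdirector)} \leq \Lambda$ from Lemma \ref{USPDlemma}, exactly as in \eqref{matrixcurltermcont}-\eqref{Amatrixauvcontineqcont}; and the $\klambda$ mass term by pulling $\Linfinitynorm{\klambda}{\Omega}$ out of the integral and applying Cauchy-Schwarz. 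The new ingredient, replacing the continuum $H^1$ machinery, is to set $M_k := \Linfinitynorm{\curl \kdirector}{\Omega}$ (finite because $\curl \kdirector$ is a piecewise polynomial on $\triangulation$) and push the $L^\infty$ burden onto the factors containing $\kdirector$ and $\curl \kdirector$, leaving only $L^2$ factors in $\vec{u}, \vec{v}, \curl \vec{u}, \curl \vec{v}$. For instance,
\begin{equation*}
\vert \Ltwoinner{\vec{u} \cdot \curl \vec{v}}{\kdirector \cdot \curl \kdirector}{\Omega} \vert \leq \sqrt{\beta}\, M_k\, \Ltwonormndim{\vec{u}}{\Omega}{3}\, \Ltwonormndim{\curl \vec{v}}{\Omega}{3} \leq \sqrt{\beta}\, M_k\, \Hdcnorm{\vec{u}}{\Omega}\, \Hdcnorm{\vec{v}}{\Omega}.
\end{equation*}

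The remaining four cross terms admit analogous bounds by permuting the roles of $\vec{u}$ and $\vec{v}$, with the last of the five terms (the one with no $\curl \vec{u}$ or $\curl \vec{v}$) carrying an $M_k^2$ constant instead of $\sqrt{\beta} M_k$. Collecting all contributions yields a single continuity constant depending on $K_1, K_2, K_3, \beta, \Lambda, M_k$, and $\Linfinitynorm{\klambda}{\Omega}$. The only step requiring real care is the bookkeeping across the five permutations of the $(K_2 - K_3)$ terms, together with the observation that at least one factor of $\kdirector$ or $\curl \kdirector$ appears in each summand; this is precisely what makes the $L^\infty$ pull-out work without invoking an inverse inequality on $\vec{u}$ or $\vec{v}$.
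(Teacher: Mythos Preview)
Your proposal is correct and follows essentially the same approach as the paper's proof: both exploit that $\kdirector \in Q_p$ and $\klambda \in P_0$ are piecewise polynomials, so $\curl \kdirector$ and $\klambda$ lie in $L^\infty(\Omega)$, and then estimate each term via Cauchy--Schwarz by placing the $\kdirector$, $\curl \kdirector$, and $\klambda$ factors in $L^\infty$. The paper's notation $C_{\text{sup}} = \sup_{\vec{x}\in\Omega}\ltwonorm{\curl \kdirector}^2$ and $C_\lambda = \sup_{\vec{x}\in\Omega}\klambda^2$ are simply your $M_k^2$ and $\Linfinitynorm{\klambda}{\Omega}^2$, and the final continuity constant is identical in form.
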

\begin{proof}
First consider
\begin{align*}
\vert b(\vec{v}, \gamma) \vert &= \left \vert \int_{\Omega} \gamma \ltwoinner{\vec{v}}{\kdirector} \diff{V} \right \vert \nonumber \\
&\leq \Ltwonorm{\gamma}{\Omega} \Ltwonorm{\vec{v} \cdot \kdirector}{\Omega} \nonumber \\
&\leq \Ltwonorm{\gamma}{\Omega} \sqrt{\beta} \Ltwonorm{\vec{v}}{\Omega},
\end{align*}
by H\"{o}lder's inequality and \eqref{limitsonnlength}. Therefore, $b(\vec{v}, \gamma)$ is a continuous bilinear form.

For the continuity of $a(\vec{u}, \vec{v})$, observe that
\begin{align}
\vert a(\vec{u}, \vec{v}) \vert \leq & K_1 \vert \Ltwoinner{\diverg \vec{u}}{\diverg \vec{v}}{\Omega} \vert + K_3 \vert \Ltwoinnerndim{\vec{Z}(\kdirector) \curl \vec{u}}{\curl \vec{v}}{\Omega}{3} \vert \nonumber \\
& + \vert K_2-K_3 \vert \Big( \vert \Ltwoinner{\vec{u} \cdot \curl \vec{v}}{\kdirector \cdot \curl \kdirector}{\Omega} \vert+\vert\Ltwoinner{\kdirector \cdot \curl \vec{v}}{\vec{u} \cdot \curl \kdirector}{\Omega} \vert \nonumber \\
& + \vert \Ltwoinner{\kdirector \cdot \curl \kdirector}{\vec{v} \cdot \curl \vec{u}}{\Omega} \vert + \vert \Ltwoinner{\kdirector \cdot \curl \vec{u}}{\vec{v} \cdot \curl \kdirector}{\Omega} \vert \nonumber \\
& + \vert \Ltwoinner{\vec{u} \cdot \curl \kdirector}{\vec{v} \cdot \curl \kdirector}{\Omega} \vert \Big) + \left \vert \int_{\Omega} \klambda \ltwoinner{\vec{u}}{\vec{v}} \diff{V} \right \vert \label{triangleauvinequality},
\end{align}
by the triangle inequality. For simplicity, consider the components of the sum above. Note that
\begin{equation} 
\vert \Ltwoinner{\diverg \vec{u}}{\diverg \vec{v}}{\Omega} \vert \leq \Ltwonorm{\diverg \vec{u}}{\Omega} \Ltwonorm{\diverg \vec{v}}{\Omega} \leq \Hdcnorm{ \vec{u}}{\Omega} \Hdcnorm{ \vec{v}}{\Omega}. \label{divergenceauvcontineq}
\end{equation}
Considering $\vert \Ltwoinnerndim{\vec{Z}(\kdirector) \curl \vec{u}}{\curl \vec{v}}{\Omega}{3} \vert$, using \eqref{matrixcurlterm} implies that
\begin{align}
\vert \Ltwoinnerndim{\vec{Z}(\kdirector) \curl \vec{u}}{\curl \vec{v}}{\Omega}{3} \vert &\leq \Ltwonormndim{\curl \vec{v}}{\Omega}{3} \Ltwonormndim{\vec{Z}(\kdirector)\curl \vec{u}}{\Omega}{3} \nonumber \\
&\leq \Lambda \Hdcnorm{\vec{v}}{\Omega}  \Ltwonormndim{\curl \vec{u}}{\Omega}{3} \nonumber \\
& \leq \Lambda \Hdcnorm{\vec{v}}{\Omega}  \Hdcnorm{\vec{u}}{\Omega}. \label{Amatrixauvcontineq}
\end{align}
By the Cauchy-Schwarz inequality,
\begin{align}
\vert \Ltwoinner{\vec{u} \cdot \curl \vec{v}}{\kdirector \cdot \curl \kdirector}{\Omega} \vert &= \vert \Ltwoinner{(\kdirector \cdot \curl \kdirector)\vec{u}}{\curl \vec{v}}{\Omega} \vert \nonumber \\
&\leq \Ltwonorm{(\kdirector \cdot \curl \kdirector)\vec{u}}{\Omega} \Ltwonorm{\curl \vec{v}}{\Omega} \label{csk3kappafirst}.
\end{align}
Note that
\begin{equation*}
(\kdirector \cdot \curl \kdirector)^2 \leq \ltwonorm{\kdirector}^2 \ltwonorm{\curl \kdirector}^2 \leq \beta \ltwonorm{\curl \kdirector}^2.
\end{equation*}
Furthermore, since $\curl \kdirector$ is a vector of piecewise polynomials, $\ltwonorm{\curl \kdirector}^2$ is bounded. Letting $\displaystyle{C_{\text{sup}} = \sup_{\vec{x} \in \Omega} \ltwonorm{\curl \kdirector}^2}$, 
\begin{align*}
\Ltwonormndim{(\kdirector \cdot \curl \kdirector)\vec{u}}{\Omega}{3} &= \left ( \int_{\Omega} (\kdirector \cdot \curl \kdirector)^2 (\vec{u} \cdot \vec{u}) \diff{V} \right)^{1/2} \nonumber \\
& \leq \sqrt{\beta} \left ( \int_{\Omega} \ltwonorm{\curl \kdirector}^2 (\vec{u} \cdot \vec{u}) \diff{V} \right)^{1/2} \nonumber \\
& \leq \sqrt{\beta C_{\text{sup}}} \Ltwonormndim{\vec{u}}{\Omega}{3}.
\end{align*}
Hence,
\begin{equation}
\vert \Ltwoinner{\vec{u} \cdot \curl \vec{v}}{\kdirector \cdot \curl \kdirector}{\Omega} \vert \leq \sqrt{\beta C_{\text{sup}}} \Hdcnorm{\vec{u}}{\Omega}\Hdcnorm{\vec{v}}{\Omega} \label{ucurlvinftyinequal}.
\end{equation}
The next summand from \eqref{triangleauvinequality} is
\begin{equation*} \label{thirdk3kappainner}
\vert\Ltwoinner{\kdirector \cdot \curl \vec{v}}{\vec{u} \cdot \curl \kdirector}{\Omega} \vert \leq \Ltwonorm{\kdirector \cdot \curl \vec{v}}{\Omega} \Ltwonorm{\vec{u} \cdot \curl \kdirector}{\Omega},
\end{equation*}
with
\begin{align*}
 \Ltwonorm{\kdirector \cdot \curl \vec{v}}{\Omega}& \leq \sqrt{\beta} \Hdcnorm{\vec{v}}{\Omega}.
\end{align*}
Furthermore,
\begin{equation*}
\Ltwonorm{\vec{u} \cdot \curl \kdirector}{\Omega} \leq \sqrt{C_{\sup}} \Ltwonormndim{\vec{u}}{\Omega}{3}.
\end{equation*}
Therefore,
\begin{equation} \label{secondk3kappacontineq}
\vert\Ltwoinner{\kdirector \cdot \curl \vec{v}}{\vec{u} \cdot \curl \kdirector}{\Omega} \vert \leq \sqrt{\beta C_{\text{sup}}} \Hdcnorm{\vec{v}}{\Omega} \Hdcnorm{\vec{u}}{\Omega}.
\end{equation}
Now consider $\vert \Ltwoinner{\kdirector \cdot \curl \kdirector}{\vec{v} \cdot \curl \vec{u}}{\Omega} \vert$ and note that this inner product is the same as that in \eqref{csk3kappafirst} with the roles of $\vec{u}$ and $\vec{v}$ reversed. Since $\vec{u}$ and $\vec{v}$ are from the same space, the steps for deriving \eqref{ucurlvinftyinequal} are equally valid. Thus,
\begin{equation}
\vert \Ltwoinner{\kdirector \cdot \curl \kdirector}{\vec{v} \cdot \curl \vec{u}}{\Omega} \vert \leq \sqrt{\beta C_{\text{sup}}} \Hdcnorm{\vec{u}}{\Omega}\Hdcnorm{\vec{v}}{\Omega} \label{thirdk3kappacontineq}.
\end{equation}
Similarly, the inequality for $\vert \Ltwoinner{\kdirector \cdot \curl \vec{u}}{\vec{v} \cdot \curl \kdirector}{\Omega} \vert$ is derived in an analogous manner to that of \eqref{secondk3kappacontineq}. Thus,
\begin{equation}
\vert \Ltwoinner{\kdirector \cdot \curl \vec{u}}{\vec{v} \cdot \curl \kdirector}{\Omega} \vert \leq \sqrt{\beta C_{\text{sup}}} \Hdcnorm{\vec{v}}{\Omega} \Hdcnorm{\vec{u}}{\Omega}. \label{forthk3kappacontineq}
\end{equation}
Next, examine
\begin{equation*}
\vert \Ltwoinner{\vec{u} \cdot \curl \kdirector}{\vec{v} \cdot \curl \kdirector}{\Omega} \vert \leq \Ltwonorm{\vec{u} \cdot \curl \kdirector}{\Omega} \Ltwonorm{\vec{v} \cdot \curl \kdirector}{\Omega}.
\end{equation*}
Since $\curl \kdirector$ is a vector of piecewise polynomials,
\begin{align*}
\Ltwonorm{\vec{u} \cdot \curl \kdirector}{\Omega} \leq \sqrt{C_{\sup}} \Ltwonormndim{\vec{u}}{\Omega}{3},\\
 \Ltwonorm{\vec{v} \cdot \curl \kdirector}{\Omega} \leq \sqrt{C_{\sup}} \Ltwonormndim{\vec{v}}{\Omega}{3}.
\end{align*}
Thus,
\begin{equation}
\vert \Ltwoinner{\vec{u} \cdot \curl \kdirector}{\vec{v} \cdot \curl \kdirector}{\Omega} \vert \leq C_{\text{sup}} \Hdcnorm{\vec{u}}{\Omega}\Hdcnorm{\vec{v}}{\Omega} \label{k3kappafiftcontineq}.
\end{equation}
Finally, since $\klambda$ is piecewise constant, $\klambda^2$ is bounded. Letting $\displaystyle{C_{\lambda} = \sup_{\vec{x} \in \Omega} \klambda^2}$,
\begin{align}
\left \vert \int_{\Omega} \klambda \ltwoinner{\vec{u}}{\vec{v}} \diff{V} \right \vert &\leq \Ltwonormndim{\klambda \vec{u}}{\Omega}{3} \Ltwonorm{\vec{v}}{\Omega} \nonumber \\
& \leq \sqrt{C_{\lambda}} \Ltwonormndim{\vec{u}}{\Omega}{3} \Hdcnorm{\vec{v}}{\Omega} \nonumber \\
&\leq \sqrt{C_{\lambda}} \Hdcnorm{\vec{u}}{\Omega} \Hdcnorm{\vec{v}}{\Omega} \label{lambdaimbedineq}.
\end{align}
Combining \eqref{divergenceauvcontineq}, \eqref{Amatrixauvcontineq}, and \eqref{ucurlvinftyinequal}-\eqref{lambdaimbedineq},
\begin{align*}
a(\vec{u},\vec{v}) &\leq \Big( K_1 + K_3 \Lambda + \vert K_2-K_3 \vert \big( 4 \sqrt{\beta C_{\text{sup}}} + C_{\text{sup}} \big) + \sqrt{C_{\lambda}} \Big) \Hdcnorm{\vec{u}}{\Omega} \Hdcnorm{\vec{v}}{\Omega}.
\end{align*}
\end{proof}

\subsection{Discrete Coercivity}

In this section, two proofs of the coercivity of $a(\vec{u}, \vec{v})$ are given. The first is for the case when $\kappa=1$. The second addresses coercivity when $\kappa$ lies in a neighborhood of unity. For both proofs, we use the additional assumption that the approximation is close enough to the solution such that the Lagrange multiplier, $\klambda$, is pointwise non-negative. This assumption is reasonable since at the solution, $\director_{*}$, $\lambda_{*}$ may be chosen arbitrarily.
\begin{lemma} \label{coercivityauv}
Under Assumption \ref{secass} and the assumption that $\klambda$ is pointwise non-negative,  if $\kappa =1$, there exists an $\alpha_0 >0$ such that $\alpha_0 \Hdcnorm{\vec{v}}{\Omega}^2 \leq a(\vec{v}, \vec{v})$ for all $\vec{v} \in V_h$.
\end{lemma}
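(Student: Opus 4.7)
The plan is to exploit the substantial simplifications that occur when $\kappa=1$, reducing $a(\vec{v},\vec{v})$ to a sum of non-negative terms whose leading part resembles the energy inner product on $\Hdiv{\Omega} \cap \Hcurl{\Omega}$, and then to close the argument with a Friedrichs-type inequality made available by Assumption~\ref{secass}.

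First, I observe that $\kappa = 1$ forces $K_2 = K_3$, so every contribution in \eqref{auvform} with prefactor $(K_2 - K_3)$ drops out. Simultaneously, \eqref{matrixD} gives $\vec{Z}(\kdirector) = \vec{I}$, collapsing the second term into the scaled square of $\Ltwonormndim{\curl \vec{v}}{\Omega}{3}$. The assumption that $\klambda$ is pointwise non-negative shows that the Lagrange multiplier contribution satisfies $\int_{\Omega} \klambda \ltwoinner{\vec{v}}{\vec{v}} \diff{V} \geq 0$. Combining these reductions yields the pointwise estimate
\begin{equation*}
a(\vec{v}, \vec{v}) \;\geq\; K_1 \Ltwonorm{\diverg \vec{v}}{\Omega}^2 + K_3 \Ltwonormndim{\curl \vec{v}}{\Omega}{3}^2,
\end{equation*}
which is the key inequality from which coercivity will follow.

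Next, I need to control the $\Ltwo{\Omega}$ part of $\Hdcnorm{\vec{v}}{\Omega}^2$ by the divergence and curl norms. Because Assumption~\ref{secass} ensures $\Omega$ is a convex polyhedron or has a $C^{1,1}$ boundary, and because $\vec{v} \in V_h \subset \Hdivnot{\Omega} \cap \Hcurlnot{\Omega}$, the imbedding invoked in Lemma~\ref{contbilinearformcontinuity} (Lemma~2.5 of \cite{Girault1}) shows $\vec{v} \in \Honenot{\Omega}^3$ with $\Honenorm{\vec{v}}{\Omega}^2 \leq C_{\text{imb}} \bigl( \Ltwonorm{\diverg \vec{v}}{\Omega}^2 + \Ltwonormndim{\curl \vec{v}}{\Omega}{3}^2 + \Ltwonormndim{\vec{v}}{\Omega}{3}^2 \bigr)$. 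A standard Poincaré inequality on $\Honenot{\Omega}^3$ then converts this into the Friedrichs-type bound
\begin{equation*}
\Ltwonormndim{\vec{v}}{\Omega}{3}^2 \;\leq\; C_{F} \bigl( \Ltwonorm{\diverg \vec{v}}{\Omega}^2 + \Ltwonormndim{\curl \vec{v}}{\Omega}{3}^2 \bigr)
\end{equation*}
for a constant $C_F > 0$ depending only on $\Omega$.

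Finally, adding $\Ltwonorm{\diverg \vec{v}}{\Omega}^2 + \Ltwonormndim{\curl \vec{v}}{\Omega}{3}^2$ to both sides of the Friedrichs inequality gives $\Hdcnorm{\vec{v}}{\Omega}^2 \leq (1 + C_F) \bigl( \Ltwonorm{\diverg \vec{v}}{\Omega}^2 + \Ltwonormndim{\curl \vec{v}}{\Omega}{3}^2 \bigr)$. Combining with the first display and setting $\alpha_0 = \min(K_1, K_3)/(1 + C_F) > 0$ delivers $\alpha_0 \Hdcnorm{\vec{v}}{\Omega}^2 \leq a(\vec{v}, \vec{v})$, as required. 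The only non-routine ingredient is the Friedrichs-type inequality on $\Hdivnot{\Omega} \cap \Hcurlnot{\Omega}$, but this is precisely what Assumption~\ref{secass} was arranged to provide, so no additional hypothesis is needed. The $\kappa = 1$ restriction is doing the heavy lifting in suppressing the indefinite cross-terms; relaxing it is what necessitates the more delicate neighborhood-of-unity argument referenced for the subsequent lemma.
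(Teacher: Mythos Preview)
Your proof is correct and follows essentially the same approach as the paper: exploit $\kappa=1$ to drop the $(K_2-K_3)$ cross-terms and reduce $\vec{Z}$ to the identity, use non-negativity of $\klambda$ to discard the Lagrange multiplier term, and close via a Friedrichs-type inequality on $\Hdivnot{\Omega}\cap\Hcurlnot{\Omega}$ together with $\alpha_0=\min(K_1,K_3)/(1+C_F)$. The only cosmetic difference is that the paper obtains the Friedrichs bound by citing Remark~2.7 of \cite{Girault1} directly (giving $\Ltwonormndim{\nabla\vec{v}}{\Omega}{3}^2\leq C_3^2(\Ltwonorm{\diverg\vec{v}}{\Omega}^2+\Ltwonormndim{\curl\vec{v}}{\Omega}{3}^2)$) rather than routing through Lemma~2.5, which makes the passage to $\Ltwonormndim{\vec{v}}{\Omega}{3}^2\leq C_F(\Ltwonorm{\diverg\vec{v}}{\Omega}^2+\Ltwonormndim{\curl\vec{v}}{\Omega}{3}^2)$ a one-line application of Poincar\'e.
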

\begin{proof}
Note that since $\kappa=1$, $(K_2-K_3) = 0$, and
\begin{align*}
a(\vec{v}, \vec{v}) = &K_1\Ltwoinner{\diverg \vec{v}}{\diverg \vec{v}}{\Omega} + K_3 \Ltwoinnerndim{\curl \vec{v}}{\curl \vec{v}}{\Omega}{3} + \int_{\Omega} \klambda \ltwoinner{\vec{v}}{\vec{v}} \diff{V}.
\end{align*}
Thus, it remains to show that there exists $\alpha_0 >0$ such that
\begin{align*}
\alpha_0 \Hdcnorm{\vec{v}}{\Omega}^2 \leq &K_1\Ltwoinner{\diverg \vec{v}}{\diverg \vec{v}}{\Omega} + K_3 \Ltwoinnerndim{\curl \vec{v}}{\curl \vec{v}}{\Omega}{3} + \int_{\Omega} \klambda \ltwoinner{\vec{v}}{\vec{v}} \diff{V}.
\end{align*}
From Remark 2.7 in \cite{Girault1}, there exists $C_3 >0$ such that
\begin{equation*}
\Ltwonormndim{\nabla \vec{v}}{\Omega}{3}^2 \leq C_3^2 \big(\Ltwonorm{\diverg \vec{v}}{\Omega}^2 + \Ltwonormndim{\curl \vec{v}}{\Omega}{3}^2 \big).
\end{equation*}
Moreover, recall that $\Ltwonormndim{\vec{v}}{\Omega}{3}^2 \leq C_4 \Ltwonormndim{\nabla \vec{v}}{\Omega}{3}^2$ by the classical Poincar\'{e}-Friedrichs' inequality. Hence, for $C = C_4C_3^2>0$,
\begin{equation} \label{PFineqforDCuse}
\Ltwonormndim{\vec{v}}{\Omega}{3}^2 \leq C \big(\Ltwonorm{\diverg \vec{v}}{\Omega}^2 + \Ltwonormndim{\curl \vec{v}}{\Omega}{3}^2 \big).
\end{equation}
Since $\Hdcnorm{\vec{v}}{\Omega}^2 = \Ltwonormndim{\vec{v}}{\Omega}{3}^2 + \Ltwonorm{\diverg \vec{v}}{\Omega}^2 + \Ltwonormndim{\curl \vec{v}}{\Omega}{3}^2$, then
\begin{equation*}
\Hdcnorm{\vec{v}}{\Omega}^2 \leq (C+1) \big(\Ltwonorm{\diverg \vec{v}}{\Omega}^2 + \Ltwonormndim{\curl \vec{v}}{\Omega}{3}^2 \big).
\end{equation*}
Letting $K = \min(K_1, K_3) >0$ and $\alpha_0 = K/(C+1)$, it follows that
\begin{equation}
\alpha_0 \Hdcnorm{\vec{v}}{\Omega}^2 \leq K \big(\Ltwonorm{\diverg \vec{v}}{\Omega}^2 + \Ltwonormndim{\curl \vec{v}}{\Omega}{3}^2 \big) \leq K_1 \Ltwonorm{\diverg \vec{v}}{\Omega}^2 + K_3\Ltwonormndim{\curl \vec{v}}{\Omega}{3}^2. \label{divcurlcoercivity}
\end{equation}
Finally, it was assumed that $\klambda$ is pointwise non-negative, implying
\begin{equation*}
\int_{\Omega} \klambda \ltwoinner{\vec{v}}{\vec{v}} \diff{V} \geq 0.
\end{equation*}
Therefore, \eqref{divcurlcoercivity} implies that
\begin{equation*}
\alpha_0 \Hdcnorm{\vec{v}}{\Omega}^2 \leq K_1\Ltwoinner{\diverg \vec{v}}{\diverg \vec{v}}{\Omega} + K_3 \Ltwoinnerndim{\curl \vec{v}}{\curl \vec{v}}{\Omega}{3} + \int_{\Omega} \klambda \ltwoinner{\vec{v}}{\vec{v}} \diff{V}.
\end{equation*}
\end{proof}

The assumption that $\kappa=1$ is a common modeling approach. In fact, this supposition represents a weaker constraint than is seen in the many models that utilize the one-constant approximation, cf. \cite{Ramage1, Liu1, Stewart1, Cohen1}. However, it is possible to loosen the restriction that $\kappa = 1$ and still maintain the coercivity of $a(\vec{u},\vec{v})$ with a small data type assumption on $\kappa$. That is, we assume that $\kappa$ varies within a certain, possibly small, range of unity. Small data assumptions are common, for instance, in the study of solutions to the Navier-Stokes' equations \cite{Fujita1, Leray1, Marusic-Paloka1}, where bounds are imposed on certain norms of the initial data in order to demonstrate existence and uniqueness of solutions.

\begin{lemma}[Small Data] \label{coercivitysmalldata}
Under Assumption \ref{secass} and the assumption that $\lambda_k$ is pointwise non-negative, there exists $\epsilon_1, \epsilon_2 > 0$, dependent on $\beta=\max \ltwonorm{\director}^2$, such that if $\kappa \in (1-\epsilon_2, 1+\epsilon_1)$, then $a(\vec{u},\vec{v})$ is coercive. 
\end{lemma}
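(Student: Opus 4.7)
The plan is to treat the case $\kappa \neq 1$ as a perturbation of the $\kappa = 1$ case handled in Lemma \ref{coercivityauv}. Splitting $a(\vec{v},\vec{v})$ into the $K_1$, $K_3\vec{Z}$, and $\klambda$ contributions (which should give coercivity) plus the five summands prefaced by $(K_2-K_3)$ (treated as perturbations), I would first extract a lower bound $\tilde\alpha_0 \Hdcnorm{\vec{v}}{\Omega}^2$ from the first group and then bound the second group in absolute value by $\vert\kappa-1\vert$ times a constant times $\Hdcnorm{\vec{v}}{\Omega}^2$, choosing $\vert\kappa-1\vert$ small enough to absorb the perturbation.

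For the positive part, Lemma \ref{USPDlemma} yields $\Ltwoinnerndim{\vec{Z}(\kdirector)\curl\vec{v}}{\curl\vec{v}}{\Omega}{3} \geq \eta \Ltwonormndim{\curl\vec{v}}{\Omega}{3}^2$ with lower bound $\eta = \eta(\kappa,\beta) > 0$. Pointwise non-negativity of $\klambda$ implies $\int_{\Omega} \klambda \vert\vec{v}\vert^2 \diff{V} \geq 0$, and the Poincar\'{e}--Friedrichs inequality \eqref{PFineqforDCuse} then allows me to deduce, exactly as in \eqref{divcurlcoercivity},
$$K_1 \Ltwonorm{\diverg\vec{v}}{\Omega}^2 + K_3 \Ltwoinnerndim{\vec{Z}(\kdirector)\curl\vec{v}}{\curl\vec{v}}{\Omega}{3} + \int_{\Omega} \klambda \vert\vec{v}\vert^2 \diff{V} \geq \tilde\alpha_0 \Hdcnorm{\vec{v}}{\Omega}^2,$$
with $\tilde\alpha_0 = \min(K_1,K_3\eta)/(C+1)$.

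For the perturbation, the five cross terms with $\vec{u}=\vec{v}$ are exactly those analyzed in the continuity proof of Lemma \ref{bilinearformcontinuity}. Summing inequalities \eqref{ucurlvinftyinequal}, \eqref{secondk3kappacontineq}, \eqref{thirdk3kappacontineq}, \eqref{forthk3kappacontineq}, and \eqref{k3kappafiftcontineq} at $\vec{u}=\vec{v}$ gives
$$\vert K_2-K_3\vert \bigl(4\sqrt{\beta C_{\text{sup}}} + C_{\text{sup}}\bigr) \Hdcnorm{\vec{v}}{\Omega}^2,$$
where $C_{\text{sup}} = \sup_{\vec{x}\in\Omega} \vert \curl\kdirector \vert^2$. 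Since $K_2 = \kappa K_3$, one has $\vert K_2-K_3\vert = K_3 \vert\kappa-1\vert$, and combining the two pieces yields
$$a(\vec{v},\vec{v}) \geq \Bigl(\tilde\alpha_0 - K_3\vert\kappa-1\vert \bigl(4\sqrt{\beta C_{\text{sup}}} + C_{\text{sup}}\bigr)\Bigr) \Hdcnorm{\vec{v}}{\Omega}^2,$$
so coercivity follows as soon as the bracketed quantity is strictly positive.

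The main obstacle is the entanglement between $\kappa$ and $\eta$. For $\kappa \geq 1$, Lemma \ref{USPDlemma} yields $\eta = 1$ for free, but for $\kappa < 1$, $\eta = 1 + (\kappa-1)\beta$ shrinks toward zero as $\kappa$ decreases, which forces $\tilde\alpha_0$ to shrink as well. I would therefore choose $\epsilon_2 < 1/\beta$ small enough that $\eta$ is uniformly bounded below (say by $1/2$) for $\kappa\in(1-\epsilon_2,1]$, fixing a uniform $\tilde\alpha_0 > 0$; afterwards, shrink $\epsilon_1$ and $\epsilon_2$ further so that $K_3 \epsilon_i (4\sqrt{\beta C_{\text{sup}}} + C_{\text{sup}}) < \tilde\alpha_0$. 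Both constraints are explicit, depend only on $\beta$, $K_1$, $K_3$, $C$, and $C_{\text{sup}}$, and are compatible, so together they determine admissible $\epsilon_1,\epsilon_2 > 0$ as required by the statement.
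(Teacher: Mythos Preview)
Your proposal is correct and follows essentially the same perturbation strategy as the paper: extract coercivity from the $K_1$, $K_3\vec{Z}$, and $\klambda$ terms via the USPD lower bound $\eta$ together with the Poincar\'e--Friedrichs inequality, then absorb the five $(K_2-K_3)$ cross terms using the continuity estimates from Lemma \ref{bilinearformcontinuity}. The only organizational difference is that the paper splits into three cases (according to the sign of $\kappa-1$ and whether $K_1<K_3$ or $K_3\le K_1$) in order to track explicitly when $\alpha_1=\min(K_1,\eta K_3)/(C+1)$ is independent of $\kappa$, and in the case $\kappa>1$ it retains the nonnegative term $\epsilon_1 K_3\Ltwoinner{\vec{v}\cdot\curl\kdirector}{\vec{v}\cdot\curl\kdirector}{\Omega}$ on the coercive side, bounding only four perturbation terms; your uniform bound $\eta\ge 1/2$ for $\epsilon_2\le 1/(2\beta)$ sidesteps the case analysis at the cost of slightly looser constants, but the argument is otherwise the same.
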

\begin{proof}
Since $\vec{Z}(\kdirector)$ is USPD by assumption,
\begin{align*}
\eta K_3 \Ltwoinnerndim{\curl \vec{v}}{\curl \vec{v}}{\Omega}{3} \leq  K_3\Ltwoinnerndim{\vec{Z}(\kdirector) \curl \vec{v}}{\curl \vec{v}}{\Omega}{3},\label{USPDinnerineq}
\end{align*}
where $\eta$ is the relevant lower bound from Lemma \ref{USPDlemma}. Defining $K' = \min (K_1, \eta K_3)>0$ and $\alpha_1 = K'/(C+1)$, where $C = C_4C_3^2$ is the constant defined in \eqref{PFineqforDCuse}, then,
\begin{equation*}
\alpha_1 \Hdcnorm{\vec{v}}{\Omega}^2 \leq K_1 \Ltwoinner{\diverg \vec{v}}{\diverg \vec{v}}{\Omega} + \eta K_3 \Ltwoinnerndim{\curl \vec{v}}{\curl \vec{v}}{\Omega}{3}.
\end{equation*}
Thus, using the assumption that $\klambda$ is pointwise non-negative,
\begin{equation}
\alpha_1 \Hdcnorm{\vec{v}}{\Omega}^2 \leq K_1 \Ltwoinner{\diverg \vec{v}}{\diverg \vec{v}}{\Omega} + K_3 \Ltwoinnerndim{\vec{Z}(\kdirector) \curl \vec{v}}{\curl \vec{v}}{\Omega}{3}+\int_{\Omega} \lambda_k \ltwoinner{\vec{v}}{\vec{v}} \diff{V}. \label{partialcoercivitysmalldata}
\end{equation}
It should be noted that the constant $\eta$ may depend on $\kappa$. Thus, the following three cases are considered.
\begin{caseof}
\case{$\kappa=1+\epsilon_1$, for $\epsilon_1 >0$.}{If this case holds, then $\eta=1$. Hence, $\alpha_1$, defined for \eqref{partialcoercivitysmalldata}, is independent of $\kappa$. Since $K_2-K_3 = K_3(\kappa-1)$, the discrete bilinear form of \eqref{auvform} becomes
\begin{align}
a(\vec{v}, \vec{v}) =& K_1 \Ltwoinner{\diverg \vec{v}}{\diverg \vec{v}}{\Omega} + K_3 \Ltwoinnerndim{\vec{Z}(\kdirector) \curl \vec{v}}{\curl \vec{v}}{\Omega}{3} \nonumber \\
&+ \epsilon_1 K_3 \Big(2 \Ltwoinner{\vec{v} \cdot \curl \vec{v}}{\kdirector \cdot \curl \kdirector}{\Omega} + 2\Ltwoinner{\kdirector \cdot \curl \vec{v}}{\vec{v} \cdot \curl \kdirector}{\Omega} \nonumber \\
&+\Ltwoinner{\vec{v} \cdot \curl \kdirector}{\vec{v} \cdot \curl \kdirector}{\Omega} \Big) + \int_{\Omega} \klambda \ltwoinner{\vec{v}}{\vec{v}} \diff{V} \label{avvsmalldata1}.
\end{align}
Observe that from \eqref{partialcoercivitysmalldata},
\begin{align}
\alpha_1 \Hdcnorm{\vec{v}}{\Omega}^2 \leq& K_1 \Ltwoinner{\diverg \vec{v}}{\diverg \vec{v}}{\Omega} + K_3 \Ltwoinnerndim{\vec{Z}(\kdirector) \curl \vec{v}}{\curl \vec{v}}{\Omega}{3}+\int_{\Omega} \lambda_k \ltwoinner{\vec{v}}{\vec{v}} \diff{V} \nonumber \\
&+\epsilon_1 K_3 \Ltwoinner{\vec{v} \cdot \curl \kdirector}{\vec{v} \cdot \curl \kdirector}{\Omega}. \label{partialcoercivitysmalldatacase1}
\end{align}
Consider the magnitude of the terms in \eqref{avvsmalldata1} not bounded from below in \eqref{partialcoercivitysmalldatacase1}, denoted as $\mathcal{G}(\vec{v}, \vec{v})$,
\begin{align*}
\vert \mathcal{G}(\vec{v}, \vec{v}) \vert &= \vert 2\epsilon_1 K_3 \big(\Ltwoinner{\vec{v} \cdot \curl \vec{v}}{\kdirector \cdot \curl \kdirector}{\Omega}+\Ltwoinner{\kdirector \cdot \curl \vec{v}}{\vec{v} \cdot \curl \kdirector}{\Omega} \big) \vert  \nonumber \\
&\leq 2\epsilon_1 K_3 \big( \vert \Ltwoinner{\vec{v} \cdot \curl \vec{v}}{\kdirector \cdot \curl \kdirector}{\Omega}\vert + \Ltwonorm{\kdirector \cdot \curl \vec{v}}{\Omega} \Ltwonorm{\vec{v} \cdot \curl \kdirector}{\Omega} \big).
\end{align*}
Using bounds derived in the proof of Lemma \ref{bilinearformcontinuity},
\begin{align*}
\vert \mathcal{G}(\vec{v}, \vec{v}) \vert \leq& 4\epsilon_1 K_3\sqrt{\beta C_{\text{sup}}} \Hdcnorm{\vec{v}}{\Omega}^2.
\end{align*}
Denoting $\alpha_3 =4 K_3\sqrt{\beta C_{\text{sup}}}$, then
\begin{equation*}
\vert \mathcal{G}(\vec{v}, \vec{v}) \vert \leq \epsilon_1 \alpha_3 \Hdcnorm{\vec{v}}{\Omega}^2.
\end{equation*}
Utilizing \eqref{partialcoercivitysmalldatacase1},
\begin{equation*}
a(\vec{v}, \vec{v}) \geq \alpha_1 \Hdcnorm{\vec{v}}{\Omega}^2 - \epsilon_1 \alpha_3 \Hdcnorm{\vec{v}}{\Omega}^2 = (\alpha_1-\epsilon_1 \alpha_3) \Hdcnorm{\vec{v}}{\Omega}^2.
\end{equation*}
It is, thus, sufficient to have $\epsilon_1 < \alpha_1/\alpha_3$, guaranteeing that $(\alpha_1-\epsilon_1 \alpha_3)>0$.}
\case{$\kappa=1-\epsilon_2>0$, for $\epsilon_2>0$, and $K_1<K_3$.}
{Since $\kappa<1$, $\eta = 1+(\kappa-1)\beta=(1-\epsilon_2 \beta)$. For $K_1 < K_3$, there exists an $\epsilon_2$ small enough, such that $K_1 < (1-\epsilon_2 \beta)K_3$. This implies that, for small enough $\epsilon_2$,
\begin{equation*}
\alpha_1 = \frac{\min(K_1, (1-\epsilon_2 \beta)K_3)}{(C+1)} = \frac{K_1}{(C+1)}.
\end{equation*}
Therefore, $\alpha_1$ is again independent of $\kappa$. Since $K_2-K_3 = K_3 (\kappa-1)$, the discrete bilinear form of \eqref{auvform} becomes
\begin{align}
a(\vec{v}, \vec{v}) =& K_1 \Ltwoinner{\diverg \vec{v}}{\diverg \vec{v}}{\Omega} + K_3 \Ltwoinnerndim{\vec{Z}(\kdirector) \curl \vec{v}}{\curl \vec{v}}{\Omega}{3} \nonumber \\
&- \epsilon_2 K_3 \Big(2 \Ltwoinner{\vec{v} \cdot \curl \vec{v}}{\kdirector \cdot \curl \kdirector}{\Omega}+2\Ltwoinner{\kdirector \cdot \curl \vec{v}}{\vec{v} \cdot \curl \kdirector}{\Omega}\nonumber \\
&+\Ltwoinner{\vec{v} \cdot \curl \kdirector}{\vec{v} \cdot \curl \kdirector}{\Omega} \Big) + \int_{\Omega} \klambda \ltwoinner{\vec{v}}{\vec{v}} \diff{V} \label{avvsmalldatacase2}.
\end{align}
The terms of \eqref{avvsmalldatacase2}, not already bounded from below in \eqref{partialcoercivitysmalldata}, are bounded as
\begin{align*}
\vert \mathcal{G}(\vec{v}, \vec{v}) \vert &= \vert \epsilon_2 K_3 \big(2 \Ltwoinner{\vec{v} \cdot \curl \vec{v}}{\kdirector \cdot \curl \kdirector}{\Omega} \nonumber \\
&\qquad +2\Ltwoinner{\kdirector \cdot \curl \vec{v}}{\vec{v} \cdot \curl \kdirector}{\Omega}+\Ltwoinner{\vec{v} \cdot \curl \kdirector}{\vec{v} \cdot \curl \kdirector}{\Omega} \big) \vert& \nonumber \\
&\leq \epsilon_2 K_3 \big( 2 \vert \Ltwoinner{\vec{v} \cdot \curl \vec{v}}{\kdirector \cdot \curl \kdirector}{\Omega}\vert \nonumber \\
&\qquad + 2 \Ltwonorm{\kdirector \cdot \curl \vec{v}}{\Omega} \Ltwonorm{\vec{v} \cdot \curl \kdirector}{\Omega} + \Ltwonorm{\vec{v} \cdot \curl \kdirector}{\Omega} \Ltwonorm{\vec{v} \cdot \curl \kdirector}{\Omega} \big).
\end{align*}
Again using the bounds derived in the proof of Lemma \ref{bilinearformcontinuity},
\begin{align*}
\vert \mathcal{G}(\vec{v}, \vec{v}) \vert \leq \epsilon_2 K_3 \big(4 \sqrt{\beta C_{\text{sup}}} + C_{\text{sup}} \big)  \Hdcnorm{\vec{v}}{\Omega}^2.
\end{align*}
Denoting $\alpha_4 =K_3 \big(4 \sqrt{\beta C_{\text{sup}}} + C_{\text{sup}} \big)$, then,
\begin{equation*}
\vert \mathcal{G}(\vec{v}, \vec{v}) \vert \leq \epsilon_2 \alpha_4 \Hdcnorm{\vec{v}}{\Omega}^2.
\end{equation*}
Using \eqref{partialcoercivitysmalldata} implies,
\begin{equation*}
a(\vec{v}, \vec{v}) \geq \alpha_1 \Hdcnorm{\vec{v}}{\Omega}^2 - \epsilon_2 \alpha_4 \Hdcnorm{\vec{v}}{\Omega}^2 = (\alpha_1-\epsilon_2 \alpha_4) \Hdcnorm{\vec{v}}{\Omega}^2.
\end{equation*}
Thus, possibly requiring $\epsilon_2$ to be even smaller, $\epsilon_2 < \alpha_1/\alpha_4$, so that $(\alpha_1-\epsilon_2 \alpha_4)>0$.
 
In the case that $\kappa<1$, the additional restriction that $\beta < \frac{1}{1-\kappa}$ for $\vec{Z}$ to be USPD is necessary, which implies that $\epsilon_2\beta < 1$ is required. Therefore, for any fixed choice of $\beta$, $\epsilon_2$ must also be taken small enough to satisfy this condition. Hence,
\begin{equation*}
\epsilon_2 < \min \left (\frac{\alpha_1}{\alpha_4}, \frac{K_3 - K_1}{\beta K_3}, \frac{1}{\beta} \right).
\end{equation*}
}
\case{$\kappa=1-\epsilon_2>0$, for $\epsilon_2>0$, and $K_3 \leq K_1$.}{Here, again, $\eta = (1-\epsilon_2 \beta)$. For this case, it is clear that $(1-\epsilon_2 \beta)K_3 < K_1$. Thus,
\begin{equation*}
\alpha_1 = \frac{(1-\epsilon_2 \beta) K_3}{(C+1)}.
\end{equation*}
Using the same $\alpha_4$ as in the previous case and similar arguments,
\begin{equation*}
a(\vec{v}, \vec{v}) \geq \alpha_1 \Hdcnorm{\vec{v}}{\Omega}^2 - \epsilon_2 \alpha_4 \Hdcnorm{\vec{v}}{\Omega}^2 = (\alpha_1-\epsilon_2 \alpha_4) \Hdcnorm{\vec{v}}{\Omega}^2.
\end{equation*}
Hence, in order for $(\alpha_1-\epsilon_2 \alpha_4)>0$ to hold, it is necessary that
\begin{equation*}
\epsilon_2 < \frac{K_3}{K_3 \beta + \alpha_4 (C+1)}.
\end{equation*}
Finally, $\epsilon_2$ must still be chosen sufficiently small with respect to $\beta$ such that $\epsilon_2 \beta < 1$, as in Case 2. Therefore,
\begin{equation*}
\epsilon_2 < \min \left (\frac{K_3}{K_3 \beta + \alpha_4 (C+1)}, \frac{1}{\beta} \right).
\end{equation*}
}
\end{caseof}

Thus, if $\epsilon_1$, $\epsilon_2>0$ satisfy the applicable conditions in the cases above, then at each Newton iteration, $a(\vec{u}, \vec{v})$ is coercive for $\kappa \in (1 - \epsilon_2, 1+ \epsilon_1)$.
\end{proof}

\subsection{Discrete Weak Coercivity} \label{DiscreteInfSup}

For this section, we consider the weak coercivity of $b(\cdot, \cdot)$, under Assumption \ref{secass}, with the restriction that $\Omega$ is a polyhedral domain. That is, we show that there exists a $\zeta > 0$ such that
\begin{equation}
\zeta \Ltwonorm{\gamma}{\Omega} \leq \sup_{\vec{v} \in V_h} \frac{\vert b(\vec{v}, \gamma) \vert}{\Hdcnorm{\vec{v}}{\Omega}}, \qquad \forall \gamma \in \Pi_h. \label{discreteLBBcondition}
\end{equation}
Before proving the weak coercivity result for $V_h$ and $\Pi_h$, we prove two critical lemmas. Let $N=2,3$ denote the dimension of $\Omega$.
\begin{lemma} \label{maxbubblelemma}
For the bubble functions, $b_T$, satisfying \eqref{bubbleprop1} and \eqref{bubbleprop2} on a rectangle $T$,  $\displaystyle{\sup_{\vec{x} \in T} b_T =  C_d / \vert T \vert}$, where $C_d=(\frac{3}{2})^N$.
\end{lemma}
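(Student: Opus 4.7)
The plan is to reduce to a reference rectangle by an affine change of variables and exploit the tensor-product structure of the quadratic bubble. Since $T$ is a rectangle in $\mathbb{R}^N$, it can be written as $T = \prod_{i=1}^N [a_i, b_i]$, and the quadratic bubble on $T$ has the tensor-product form $b_T(\vec{x}) = c_T \prod_{i=1}^N (x_i - a_i)(b_i - x_i)$ for some normalizing constant $c_T > 0$ chosen to enforce \eqref{bubbleprop1}.

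First I would compute everything on a reference element $\hat{T} = [0,1]^N$. The one-dimensional factor $\hat{x}(1-\hat{x})$ satisfies $\int_0^1 \hat{x}(1-\hat{x}) \diff{\hat{x}} = 1/6$ and attains its maximum $1/4$ at $\hat{x} = 1/2$. Taking the tensor product, the normalized reference bubble
\begin{equation*}
\hat{b}(\hat{\vec{x}}) = 6^N \prod_{i=1}^N \hat{x}_i (1-\hat{x}_i)
\end{equation*}
satisfies $\int_{\hat{T}} \hat{b} \diff{\hat{V}} = 1$, is strictly positive on the interior, vanishes on $\partial \hat{T}$, and attains its maximum at the center $\hat{\vec{x}} = (1/2, \ldots, 1/2)$ with value $\sup_{\hat{T}} \hat{b} = 6^N (1/4)^N = (3/2)^N$.

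Next I would transfer the result to $T$ via the affine diffeomorphism $F: \hat{T} \to T$ defined by $F(\hat{\vec{x}})_i = a_i + (b_i - a_i)\hat{x}_i$, whose Jacobian determinant equals $\prod_i (b_i - a_i) = \vert T \vert$. The uniquely determined normalized bubble on $T$ is $b_T(\vec{x}) = \vert T \vert^{-1} \hat{b}(F^{-1}(\vec{x}))$; indeed the change of variables gives $\int_T b_T \diff{V} = \vert T \vert^{-1} \int_{\hat{T}} \hat{b} \cdot \vert T \vert \diff{\hat{V}} = 1$, verifying \eqref{bubbleprop1}, and positivity \eqref{bubbleprop2} is inherited from $\hat{b}$. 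Since $F$ is a bijection, the supremum is preserved up to the normalizing factor, yielding
\begin{equation*}
\sup_{\vec{x} \in T} b_T(\vec{x}) = \vert T \vert^{-1} \sup_{\hat{\vec{x}} \in \hat{T}} \hat{b}(\hat{\vec{x}}) = \frac{(3/2)^N}{\vert T \vert} = \frac{C_d}{\vert T \vert},
\end{equation*}
as claimed. There is no real obstacle here; the calculation is a direct consequence of the tensor-product form of the quadratic bubble and the invariance of the integral-normalization under affine scaling.
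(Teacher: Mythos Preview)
Your proof is correct and follows essentially the same approach as the paper: both compute the integral and maximum of the tensor-product quadratic bubble and take their ratio. The paper carries out the calculation directly on $[0,a]\times[0,b]$ (and then $[0,a]\times[0,b]\times[0,c]$) rather than via a reference element, but your version is a cleaner, dimension-uniform repackaging of the same computation.
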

\begin{proof}
For $N=2$, without loss of generality, assume that $T$ is a rectangle at the origin given by $[0,a] \times [0, b]$. Let $\bar{b}_T = xy(a-x)(b-y)$ on $T$ and zero elsewhere. Note that $\bar{b}_T$ is the bubble function on $T$ that has not been normalized such that \eqref{bubbleprop1} holds. Integrating over $T$ yields
\begin{align}
\int_T \bar{b}_T \diff{V} =\frac{\vert T \vert^3}{36} \label{bubbleintegral}.
\end{align}
Computing the maximum value of $\bar{b}_T$ shows that $\displaystyle{\sup_{\vec{x} \in T} \bar{b}_T = \frac{\vert T \vert^2}{16}}$. Normalizing $\bar{b}_T$, using \eqref{bubbleintegral}, to define $b_T$ implies that 
\begin{equation*}
\sup_{\vec{x} \in T} b_T = \frac{\vert T \vert^2 / 16}{\vert T \vert^3 / 36} = \frac{9}{4 \vert T \vert}.
\end{equation*}
The case for $N=3$ is derived analogously for $T$, the rectangular box $[0,a]\times[0,b]\times[0,c]$, and $\bar{b}_T = xyz(a-x)(b-y)(c-z)$. The corresponding $b_T$ satisfies
\begin{equation*}
\sup_{\vec{x} \in T} b_T = \frac{\vert T \vert^2 / 64}{\vert T \vert^3 / 216} = \frac{27}{8 \vert T \vert}.
\end{equation*}
\end{proof}

Following the notation in \cite{Brenner1}, consider two finite elements $(T, \mathcal{P}, \mathcal{N})$ and $(\hat{T}, \hat{\mathcal{P}}, \hat{\mathcal{N}})$, where $T$ and $\hat{T}$ are element domains, $\mathcal{P}$ and $\hat{\mathcal{P}}$ are the respective sets of basis functions, and $\mathcal{N}$ and $\hat{\mathcal{N}}$ are the associated dual bases. We say that $(\hat{T}, \hat{\mathcal{P}}, \hat{\mathcal{N}})$ is affine equivalent to $(T, \mathcal{P}, \mathcal{N})$ if there exists an affine mapping, $G: T \to \hat{T}$, such that for $\vec{x} \in T$
\begin{equation*}
G\vec{x} = \vec{x}_0 + M\vec{x},
\end{equation*}
with non-singular matrix $M$, satisfying
\begin{itemize}
\item $G(T) = \hat{T}$
\item $G^* \hat{\mathcal{P}} = \mathcal{P}$ and
\item $G_{*} \mathcal{N} = \hat{\mathcal{N}}$.
\end{itemize}
Here, the pullback $G^*$ is defined by $G^*(\hat{f}) := \hat{f} \circ G$, and the push-forward $G_{*}$ is defined by $(G_{*} N)(\hat{f}) := N(G^*(\hat{f}))$.

\begin{lemma} \label{affinebubblelemma}
Consider a rectangular reference element $(T, \mathcal{P}, \mathcal{N})$, where $\mathcal{P}$ is the basis of shape functions for $T$ associated with $V_h \times \Pi_h$, defined above. If, for all $\hat{T} \in \mathcal{T}_h$, $(\hat{T}, \hat{\mathcal{P}}, \hat{\mathcal{N}})$ is affine equivalent to $(T, \mathcal{P}, \mathcal{N})$, then $\displaystyle{\sup_{\hat{\vec{x}} \in \hat{T}} b_{\hat{T}} = C_d/ \vert \hat{T} \vert}$, where $b_{\hat{T}}$ is the normalized bubble function satisfying \eqref{bubbleprop1} and \eqref{bubbleprop2} on $\hat{T}$.
\end{lemma}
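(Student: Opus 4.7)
The plan is to reduce the computation on $\hat{T}$ to the one already carried out on the reference rectangle $T$ in Lemma \ref{maxbubblelemma} via a change of variables induced by the affine map $G$. The argument is entirely elementary; the only delicate point is reconciling the affine-equivalence definition of the shape functions with the \emph{normalization} condition \eqref{bubbleprop1} that distinguishes $b_{\hat{T}}$ from an arbitrary scalar multiple of a pulled-back bubble.

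First, let $G\vec{x} = \vec{x}_0 + M\vec{x}$ be the affine map from $T$ to $\hat{T}$ guaranteed by hypothesis, and let $b_T \in \mathcal{P}$ denote the quadratic bubble on $T$ normalized so that \eqref{bubbleprop1} and \eqref{bubbleprop2} hold. Define the pulled-back function $\tilde{b}_{\hat{T}} := b_T \circ G^{-1}$ on $\hat{T}$; by the definition of affine equivalence, $\tilde{b}_{\hat{T}}$ lies in $\hat{\mathcal{P}}$ and vanishes on $\partial \hat{T}$, and it is positive on the interior of $\hat{T}$ since $b_T$ is positive on the interior of $T$. Hence $\tilde{b}_{\hat{T}}$ agrees with $b_{\hat{T}}$ up to a multiplicative constant, say $b_{\hat{T}} = c\,\tilde{b}_{\hat{T}}$, and $c$ is determined by the normalization requirement \eqref{bubbleprop1} on $\hat{T}$.

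Second, I would compute $c$ via the change-of-variables formula:
\begin{equation*}
\int_{\hat{T}} \tilde{b}_{\hat{T}}\diff{\hat{V}} = \int_T b_T\, \vert \det M \vert \diff{V} = \vert \det M \vert = \frac{\vert \hat{T} \vert}{\vert T \vert},
\end{equation*}
using $\int_T b_T \diff{V} = 1$ together with $\vert \hat{T} \vert = \vert \det M \vert\, \vert T \vert$. Therefore $c = \vert T \vert / \vert \hat{T} \vert$, and
\begin{equation*}
b_{\hat{T}}(\hat{\vec{x}}) = \frac{\vert T \vert}{\vert \hat{T} \vert}\, b_T\bigl(G^{-1}(\hat{\vec{x}})\bigr).
\end{equation*}

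Finally, since $G$ is a bijection from $T$ to $\hat{T}$, taking suprema commutes with the affine relabelling, so
\begin{equation*}
\sup_{\hat{\vec{x}} \in \hat{T}} b_{\hat{T}}(\hat{\vec{x}}) = \frac{\vert T \vert}{\vert \hat{T} \vert} \sup_{\vec{x} \in T} b_T(\vec{x}) = \frac{\vert T \vert}{\vert \hat{T} \vert}\cdot \frac{C_d}{\vert T \vert} = \frac{C_d}{\vert \hat{T} \vert},
\end{equation*}
where the middle equality invokes Lemma \ref{maxbubblelemma}. This is precisely the claim. The only mild obstacle is the normalization bookkeeping in step one: one must verify that the pullback of $b_T$ under $G$ genuinely yields \emph{the} normalized bubble on $\hat{T}$, which is why the Jacobian $\vert \det M \vert$ appears as the renormalization factor in step two and then cancels against $\vert \hat{T} \vert / \vert T \vert$ in step three.
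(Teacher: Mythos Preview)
Your proof is correct and follows essentially the same approach as the paper: define the pulled-back bubble $b_T\circ G^{-1}$, compute its integral via change of variables to identify the renormalization constant, and then read off the supremum from Lemma \ref{maxbubblelemma}. The only cosmetic difference is that you write the normalization constant as $\vert T\vert/\vert\hat T\vert$ while the paper writes it as $1/\vert\det M\vert$; these agree since $\vert\hat T\vert=\vert\det M\vert\,\vert T\vert$.
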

\begin{proof}
Note that the non-normalized bubble function on $\hat{T}$, $\bar{b}_{\hat{T}}$, is given by
\begin{equation*}
\bar{b}_{\hat{T}} = b_T \circ G^{-1},
\end{equation*}
where $b_T$ is the normalized bubble function on $T$. Therefore, the maximum value for $\bar{b}_{\hat{T}}$ corresponds to the maximum value for $b_T$, which, as shown in Lemma \ref{maxbubblelemma}, is $C_d/\vert T \vert$. Observe that
\begin{align*}
\int_{\hat{T}} \bar{b}_{\hat{T}} \diff{V} &= \int_T b_T \vert \det M \vert \diff{V} \nonumber \\
&= \vert \det M \vert,
\end{align*}
where $\det M$ denotes the determinant of the matrix $M$. Thus, $b_{\hat{T}}$ is given by dividing $\bar{b}_{\hat{T}}$ by $\vert \det M \vert$. Therefore,
\begin{align*}
\sup_{\hat{\vec{x}} \in \hat{T}} b_{\hat{T}} &= \frac{1}{\vert \det M \vert} \sup_{\vec{x} \in T} b_T \nonumber \\
&=\frac{C_d}{\vert \det M \vert \vert T \vert} \nonumber \\
&= \frac{C_d}{\vert \hat{T} \vert}.
\end{align*}
\end{proof}

In the following, we will make use of the following second set of assumptions when necessary.
\begin{assumption} \label{secass2}
Let $\{ \triangulation \}$ be a family of quadrilateral subdivisions of a polyhedral domain $\Omega$ satisfying \eqref{maxuniformity} and \eqref{quasiuniform}. Moreover, assume that for each $T \in \triangulation$, the element $(T, \mathcal{P}_T, \mathcal{N}_T)$ is affine equivalent to a rectangular reference element for all $h$.
\end{assumption}

Prior to considering the following lemma, recall that $\alpha$ and $\beta$ are the bounds on the length of $\director$ in \eqref{limitsonnlength}, $\rho$ is the quasi-uniform mesh parameter defined in \eqref{quasiuniform}, and $C_d$ is the constant derived in Lemma \ref{maxbubblelemma} depending on $N$, the dimension of $\Omega$.
\begin{lemma} \label{bubblespacelemma}
Under Assumptions \ref{secass} and \ref{secass2}, $V_h$ and $\Pi_h$ constitute a pair satisfying \eqref{discreteLBBcondition} with constant $\zeta = h \left[\frac{2 \alpha \rho^N}{9C_f C_{*} \sqrt{\beta C_d}} \right]$, for $C_f$ and $C_{*}$ defined below.
\end{lemma}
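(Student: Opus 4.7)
The plan is to verify the discrete inf-sup condition \eqref{discreteLBBcondition} by an explicit element-wise bubble construction. Given $\gamma \in \Pi_h$ with value $\gamma_T$ on each $T \in \triangulation$, I would set $a_T = \gamma_T$ in the bubble-space definition to obtain a test function $\vec{v}_\gamma \in V_h^b \subset V_h$ with $\vec{v}_\gamma|_T = \gamma_T\, b_T\, \kdirector|_T$. This choice is motivated by the form $b(\vec{v}, \gamma) = \int_\Omega \gamma \ltwoinner{\kdirector}{\vec{v}}\diff{V}$, for which this particular $\vec{v}_\gamma$ produces an integrand that factorizes cleanly as $\gamma_T^2\, b_T\, |\kdirector|^2$ on each $T$. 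Since $\gamma$ is piecewise constant, $\int_T b_T\diff{V} = 1$ by \eqref{bubbleprop1}, $b_T \geq 0$ by \eqref{bubbleprop2}, and $|\kdirector|^2 \geq \alpha$ by \eqref{limitsonnlength}, the lower bound $b(\vec{v}_\gamma, \gamma) \geq \alpha \sum_T \gamma_T^2$ follows immediately.

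The second step is an upper bound on $\Hdcnorm{\vec{v}_\gamma}{\Omega}$. On each element, combining $\int_T b_T^2 \diff{V}\leq (\sup_T b_T)\int_T b_T \diff{V}= C_d/|T|$ from Lemma \ref{affinebubblelemma} with $|\kdirector|^2 \leq \beta$ gives $\Ltwonormndim{\vec{v}_\gamma}{T}{3}^2 \leq \gamma_T^2 \beta C_d / |T|$. Because $\vec{v}_\gamma|_T$ is a polynomial of fixed degree on an affine-equivalent reference element, a standard inverse estimate with constant $C_*$ controls both $\Ltwonorm{\diverg \vec{v}_\gamma}{T}$ and $\Ltwonormndim{\curl \vec{v}_\gamma}{T}{3}$ by $C_* h_T^{-1} \Ltwonormndim{\vec{v}_\gamma}{T}{3}$; the quasi-uniformity bound $h_T \geq \rho h \diam \Omega$ from \eqref{quasiuniform} then converts $h_T^{-1}$ to a constant multiple of $h^{-1}$. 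Summing over $T$ and absorbing dimensional constants into $C_f$ produces an estimate of the form $\Hdcnorm{\vec{v}_\gamma}{\Omega}^2 \leq (C_f C_*/h)^2 \beta C_d \sum_T \gamma_T^2/|T|$.

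Finally, I would assemble the pieces. Using the two-sided volume bounds $(\rho h \diam\Omega)^N \lesssim |T| \lesssim (h \diam \Omega)^N$ implied by \eqref{maxuniformity}, \eqref{quasiuniform}, and the affine-rectangular structure of elements, I would convert both $\sum_T \gamma_T^2$ and $\sum_T \gamma_T^2/|T|$ into multiples of $\Ltwonorm{\gamma}{\Omega}^2 = \sum_T \gamma_T^2 |T|$, then take the ratio $|b(\vec{v}_\gamma, \gamma)|/\Hdcnorm{\vec{v}_\gamma}{\Omega}$ and simplify. Collecting constants into the stated form gives $\zeta = h[2\alpha \rho^N/(9 C_f C_* \sqrt{\beta C_d})]$ as a lower bound on this ratio, and hence on the supremum over $V_h$. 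The main technical obstacle is bookkeeping the three constants ($C_d$ from the bubble normalization, $C_*$ from the inverse estimate, and $\rho^N$ from the quasi-uniform volume bounds) together with the numerical factors $2$ and $9$, all of which must be tracked consistently through the element-wise sums in order to recover the sharp form of $\zeta$.
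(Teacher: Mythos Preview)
Your proposal is correct and follows essentially the same bubble-construction strategy as the paper: choose $a_T = \gamma_T$, bound $b(\vec{v}_\gamma,\gamma)$ below via \eqref{bubbleprop1}, \eqref{bubbleprop2}, and $|\kdirector|^2\geq\alpha$, bound the norm of $\vec{v}_\gamma$ above using Lemma~\ref{affinebubblelemma} and an inverse estimate, and finish with the quasi-uniform volume bounds. The only organizational difference is that the paper first applies the global chain $\Hdcnorm{\vec{v}}{\Omega}\leq C_f\Honenorm{\vec{v}}{\Omega}\leq C_f C_* h^{-1}\Ltwonormndim{\vec{v}}{\Omega}{3}$ (these inequalities are precisely where $C_f$ and $C_*$ are defined) and then works entirely with the $L^2$-ratio, whereas you propose bounding $\diverg$ and $\curl$ element-wise before summing; both routes arrive at the same constant.
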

\begin{proof}
Since $V_h \subset Q_l \times Q_l \times Q_l$, by \cite[Theorem 4.5.11]{Brenner1} there exists $C_{*}>0$ depending only on $\rho$ such that
\begin{equation*}
\Honenorm{\vec{v}}{\Omega} \leq C_{*} h^{-1} \Ltwonormndim{\vec{v}}{\Omega}{3}.
\end{equation*}
Furthermore, using the fact that $\Hdcnorm{\vec{v}}{\Omega} \leq C_f \Honenorm{\vec{v}}{\Omega}$,
\begin{equation} \label{Cfreference}
\sup_{\vec{v} \in V_h} \frac{\vert b(\vec{v}, \gamma) \vert}{\Hdcnorm{\vec{v}}{\Omega}} \geq \sup_{\vec{v} \in V_h} \frac{\vert b(\vec{v}, \gamma) \vert}{C_f \Honenorm{\vec{v}}{\Omega}} \geq \sup_{\vec{v} \in V_h} \frac{\vert b(\vec{v}, \gamma) \vert}{C_f C_{*} h^{-1} \Ltwonormndim{\vec{v}}{\Omega}{3}}.
\end{equation}
Therefore, \eqref{discreteLBBcondition} is reduced to finding $\zeta>0$ such that
\begin{align*}
\zeta \Ltwonorm{\gamma}{\Omega} \leq \sup_{\vec{v} \in V_h} \frac{\vert b(\vec{v}, \gamma) \vert}{C_f C_{*} h^{-1} \Ltwonormndim{\vec{v}}{\Omega}{3}}, \qquad \forall \gamma \in \Pi_h.
\end{align*}

Now consider constructing $\vec{v}_0$ on each $T \in \triangulation$ by letting $a_T = \gamma |_T$, where this denotes the restriction of $\gamma$ to the element $T$, and defining
\begin{equation*}
\vec{v}_0 |_T = a_T b_T \kdirector |_T.
\end{equation*}
Observe that, as defined, $\vec{v}_0 \in V_h$. Let $C_m = \max_{T \in \triangulation} \vert T \vert$. Then,
\begin{align}
b(\vec{v}_0, \gamma) = \sum_{T \in \triangulation} \int_T \gamma \ltwoinner{\vec{v}_0}{\kdirector} &\geq \alpha \sum_{T \in \triangulation} \gamma^2 \int_T b_T \diff{V} \nonumber \\
&= \alpha \sum_{T \in \triangulation} \gamma^2 \geq \frac{\alpha}{C_m} \Ltwonorm{\gamma}{\Omega}^2. \label{discreteinfsupnumerator}
\end{align}
It is also the case that
\begin{align*}
\Ltwonormndim{\vec{v}_0}{\Omega}{3}^2 &= \sum_{T \in \triangulation} \int_T a_T^2 b_T^2 \ltwoinner{\kdirector}{\kdirector} \diff{V} \leq \beta \sum_{T \in \triangulation} \gamma^2 \int_T b_T^2 \diff{V}.
\end{align*}
Since the bubble functions are fixed, let 
\begin{align*}
C_b = \max_{T \in \triangulation} \int_T b_T^2 \diff{V}, \qquad C_T = \min_{T \in \triangulation} \vert T \vert.
\end{align*}
Thus,
\begin{align}
\Ltwonormndim{\vec{v}_0}{\Omega}{3}^2 &\leq \beta C_b \sum_{T \in \triangulation} \gamma^2 \leq \frac{\beta C_b}{C_T} \Ltwonorm{\gamma}{\Omega}^2.\label{discreteinfsupdenominator}
\end{align}
Therefore, combining \eqref{discreteinfsupnumerator} and \eqref{discreteinfsupdenominator}, 
\begin{align} 
\sup_{\vec{v} \in V_h} \frac{\int_{\Omega} \gamma \ltwoinner{\vec{v}}{\kdirector} \diff{V}}{\Ltwonorm{\vec{v}}{\Omega}} &\geq  \frac{\int_{\Omega} \gamma \ltwoinner{\vec{v}_0}{\kdirector} \diff{V}}{\Ltwonorm{\vec{v}_0}{\Omega}}\nonumber \\ 
&\geq \frac{\frac{\alpha}{C_m} \Ltwonorm{\gamma}{\Omega}^2}{\sqrt{\frac{\beta C_b}{C_T}} \Ltwonorm{\gamma}{\Omega}} = \frac{\alpha \sqrt{C_T}}{C_m \sqrt{\beta C_b}} \Ltwonorm{\gamma}{\Omega} \label{meshdepinfsupconstant}.
\end{align}
Note that the final constant in \eqref{meshdepinfsupconstant} is mesh dependent. Let $N=2,3$ denote the dimension of $\Omega$. Observe that
\begin{equation*}
C_b \leq \max_{T \in \triangulation} \sup_{\vec{x} \in T} b_T \int_T b_T \diff{V} = \max_{T \in \triangulation} \sup_{\vec{x} \in T} b_T.
\end{equation*}
From Lemma \ref{affinebubblelemma}, for arbitrary $T \in \triangulation$,
\begin{equation*}
\sup_{\vec{x} \in T} b_T = C_d/ \vert T \vert,
\end{equation*}
where $C_d$ depends only on the dimension of $\Omega$. Therefore,
\begin{equation*}
\max_{T \in \triangulation} \sup_{\vec{x} \in T} b_T = \frac{C_d}{C_T}.
\end{equation*}
Hence,
\begin{equation} \label{CBinequal}
\frac{\sqrt{C_T}}{C_m \sqrt{C_b}} \geq \frac{C_T}{C_m \sqrt{C_d}}.
\end{equation}
Define the constants
\begin{align*}
&C_{2,1} = \frac{\pi}{4},&  &C_{2,2} = \pi, & &\text{for } N = 2, \\
&C_{3,1} = \frac{\pi}{6}, & &C_{3,2} = \frac{3\pi}{4},&  &\text{for } N=3.
\end{align*}
Using Properties \eqref{maxuniformity} and \eqref{quasiuniform} with the constants above, it is straightforward to show that
\begin{align*}
C_T &\geq C_{N,1} (\min \{ \diam B_T : T \in \triangulation \})^N \geq C_{N,1} \rho^N (h \diam \Omega)^N, \\
C_m &\leq C_{N,2} (\max \{ \diam T : T \in \triangulation \})^N \leq C_{N,2} (h \diam \Omega)^N.
\end{align*}
Therefore,
\begin{align} \label{CTCMInequal}
\frac{C_T}{C_m} \geq \frac{C_{N,1} \rho^N}{C_{N,2}}.
\end{align}
Utilizing \eqref{CBinequal} and \eqref{CTCMInequal}
\begin{equation*}
\frac{\alpha \sqrt{C_T}}{C_m \sqrt{\beta C_b}} \Ltwonorm{\gamma}{\Omega} \geq \frac{\alpha C_{N,1}\rho^N}{C_{N,2} \sqrt{\beta C_d}} \Ltwonorm{\gamma}{\Omega} \geq \frac{2\alpha \rho^N}{9\sqrt{\beta C_d}} \Ltwonorm{\gamma}{\Omega},
\end{equation*}
where $C_d$ depends only on the dimension of $\Omega$. Hence, \eqref{discreteLBBcondition} is satisfied with constant $\zeta = h \left[\frac{2 \alpha \rho^N}{9C_f C_{*} \sqrt{\beta C_d}} \right]$. Thus, $V_h$ and $\Pi_h$ represent a pair of spaces on which $b(\cdot, \cdot)$ is weakly coercive.
\end{proof}\\
For $\kdirector \in Q_p$, with $V_h \subset Q_m \times Q_m \times Q_m \oplus V_h^b$, as in \eqref{vspace}, and $l = \max (m, p+2)$, the above lemma yields an immediate corollary.
\begin{corollary} \label{stabilitycorollary}
Under Assumptions \ref{secass} and \ref{secass2}, $\kdirector \in Q_p$ implies that $b(\cdot, \cdot)$ is weakly coercive for the pair $Q_l$--$P_0$. The special case that $\kdirector \in P_0$ implies that $b(\cdot, \cdot)$ is weakly coercive on the pair $Q_{\max(m, 2)}$--$P_0$.
\end{corollary}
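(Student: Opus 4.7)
The plan is to deduce the corollary directly from Lemma~\ref{bubblespacelemma} via a subspace inclusion argument, once the polynomial degree of the bubble component has been accounted for. First, I would verify that, under the hypothesis $\kdirector \in Q_p$, each element-local shape function $a_T b_T \kdirector|_T$ making up $V_h^b$ lies in $Q_{p+2}$: the quadratic bubble $b_T$ belongs to $Q_2$, and $\kdirector|_T \in Q_p$, so their product raises the tensor-product degree by at most two. Combined with $Q_m \subset Q_l$ for $l = \max(m, p+2)$ and the common homogeneous Dirichlet boundary condition, this yields
\begin{equation*}
V_h \;\subset\; \big(Q_l \times Q_l \times Q_l\big) \cap \Hdcnot.
\end{equation*}

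Next, I would pass the inf-sup estimate up to the larger space. Since $V_h$ is a linear subspace of the conforming $Q_l^3$ velocity space, equipped with the identical norm $\Hdcnorm{\cdot}{\Omega}$, for each $\gamma \in \Pi_h = P_0$ one has
\begin{equation*}
\sup_{\vec{v} \in V_h} \frac{\vert b(\vec{v}, \gamma) \vert}{\Hdcnorm{\vec{v}}{\Omega}} \;\leq\; \sup_{\vec{w} \in Q_l^3} \frac{\vert b(\vec{w}, \gamma) \vert}{\Hdcnorm{\vec{w}}{\Omega}}.
\end{equation*}
Applying Lemma~\ref{bubblespacelemma} to the left-hand side then yields the inequality \eqref{discreteLBBcondition} on the $Q_l$--$P_0$ pair with at least the same constant $\zeta$. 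The special case $\kdirector \in P_0$ is handled identically: because $\kdirector|_T$ is merely constant per element, $b_T \kdirector|_T \in Q_2$, so $V_h^b \subset Q_2^3$ and $V_h \subset Q_{\max(m,2)}^3$, giving weak coercivity on the $Q_{\max(m,2)}$--$P_0$ pair.

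No serious obstacle is anticipated, as the corollary is essentially a repackaging of Lemma~\ref{bubblespacelemma} in the more standard mixed-element language. The only care required is the polynomial-degree bookkeeping for the bubble contributions and the observation that the Dirichlet boundary condition built into $V_h$ is compatible with the usual boundary condition imposed on the conforming $Q_l^3$ space, so that the monotonicity of the supremum over subspaces can be invoked with the same $\Hdcnorm{\cdot}{\Omega}$ norm on both sides.
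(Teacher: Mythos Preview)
Your proposal is correct and follows essentially the same argument as the paper: both establish the inclusion $V_h \subset Q_l^3$ by polynomial-degree bookkeeping on $b_T \kdirector|_T$, and then pass the inf-sup bound from Lemma~\ref{bubblespacelemma} to the larger space via monotonicity of the supremum over subspaces. The paper's version is more terse (it simply asserts that weak coercivity ``must also hold'' on $Q_l$--$P_0$), while you have written out the supremum inequality explicitly; for the $P_0$ case the paper also remarks that Lemma~\ref{bubblespacelemma} remains valid when $\kdirector$ is merely piecewise constant, which is worth noting since the standing assumption earlier in the section took $p\geq 1$.
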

\begin{proof}
Note that if $\kdirector \in Q_p$, the bubble space defined above satisfies $V_h^b \subset Q_{p+2} \times Q_{p+2} \times Q_{p+2}$, since $b_T \in Q_2$. This implies that $V_h \subset Q_l \times Q_l \times Q_l$. Therefore, since $b(\cdot, \cdot)$ is weakly coercive for the pair $V_h$--$P_0$, weak coercivity must also hold for the pair $Q_l$--$P_0$. If $\kdirector \in P_0$, then $V_h^b \subset Q_2 \times Q_2 \times Q_2$. Hence, $V_h \subset Q_{\max(m, 2)} \times Q_{\max(m, 2)} \times Q_{\max(m, 2)}$. The lemma above is equally valid for $\kdirector \in P_0$. Therefore, $b(\cdot, \cdot)$ is weakly coercive on the pair $Q_{\max(m, 2)}$--$P_0$ for the given $\kdirector$.
\end{proof}

In light of the lemmas discussed above, verification of weak coercivity allows for the formulation and proof of this paper's main theorem.
\begin{theorem} \label{existuniquetheorem}
Under Assumptions \ref{secass} and \ref{secass2}, existence of discrete solutions $(\ddirector_h, \dlambda_h)$ for each Newton linearization are guaranteed for the pair $V_h$--$\Pi_h$. In the case that $\kappa=1$ or that $\kappa$ satisfies the small data conditions of Lemma \ref{coercivitysmalldata}, such solutions are unique.
\end{theorem}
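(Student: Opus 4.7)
The plan is to verify the hypotheses of the standard Brezzi theory for mixed finite-element saddle-point systems (cf. \cite{Boffi1,Brenner1,Braess1}) and then invoke it to obtain well-posedness of \eqref{generalizedNewtoniterationweakform1}-\eqref{generalizedNewtoniterationweakform2}. Brezzi's theorem requires (i) continuity of $a$ and $b$, (ii) boundedness of the linear functionals $F$ and $G$, (iii) coercivity of $a$ on the kernel $\ker(b)\subset V_h$, and (iv) the discrete inf-sup condition on $b$. The preceding subsections have been engineered to supply precisely these ingredients.

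Concretely, condition (i) is Lemma \ref{bilinearformcontinuity}, condition (ii) is Lemma \ref{boundedlinearforms}, and condition (iv) is Lemma \ref{bubblespacelemma}, which invokes Assumption \ref{secass2}. For the uniqueness half of the theorem, condition (iii) is supplied by Lemma \ref{coercivityauv} when $\kappa=1$ and by Lemma \ref{coercivitysmalldata} in the small-data regime: in each of those cases $a$ is coercive on all of $V_h$, and hence in particular on $\ker(b)$. A direct application of Brezzi's theorem then delivers a unique discrete solution $(\ddirector_h,\dlambda_h)\in V_h\times \Pi_h$ for each Newton step.

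For the existence assertion in the general-$\kappa$ regime, where coercivity on $\ker(b)$ is not established, the plan is to exploit the symmetry of $a$ together with the inf-sup condition. The inf-sup of Lemma \ref{bubblespacelemma} implies that the operator induced by $b$ is surjective onto $\Pi_h$, so one can first choose $\vec{v}_0\in V_h$ with $b(\vec{v}_0,\gamma)=G(\gamma)$ and then reduce the problem to finding a correction in $\ker(b)$. Solvability of the reduced equation follows from the symmetry of $a$ together with the construction of the Newton right-hand side as G\^ateaux derivatives of the Lagrangian $\mathcal{L}$, which guarantees that $F-a(\vec{v}_0,\cdot)$ lies in the range of the restriction of $a$ to $\ker(b)$. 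The main obstacle is precisely this compatibility step: once coercivity on $\ker(b)$ is available the proof collapses to a citation of Brezzi's theorem, but in its absence one must carefully exploit the variational origin of the RHS to ensure that the Newton data is admissible. Uniqueness, of course, cannot be recovered by such a symmetry argument alone and genuinely requires the coercivity supplied by Lemmas \ref{coercivityauv} or \ref{coercivitysmalldata}.
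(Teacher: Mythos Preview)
Your uniqueness argument is essentially identical to the paper's: both invoke the Brezzi/Babu\v{s}ka mixed framework with Lemmas \ref{boundedlinearforms}, \ref{bilinearformcontinuity}, \ref{coercivityauv} (or \ref{coercivitysmalldata}), and \ref{bubblespacelemma} supplying boundedness of the data, continuity, coercivity, and inf-sup, respectively.

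The existence claim for general $\kappa$ is where you diverge. The paper's own proof is extremely terse here: it simply asserts that Lemmas \ref{boundedlinearforms} and \ref{bilinearformcontinuity} alone suffice for existence, with the inf-sup Lemma \ref{bubblespacelemma} and coercivity entering only for uniqueness. The paper does not attempt anything like the compatibility argument you sketch; it treats existence as following directly from the cited mixed-formulation references.

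Your route is more explicit about the difficulty, but the step you yourself flag as the main obstacle is a genuine gap. The claim that $F - a(\vec{v}_0,\cdot)$ lies in the range of $a|_{\ker(b)}$ because $F$ arises as a G\^ateaux derivative of $\mathcal{L}$ is not justified: the Lagrangian is nonlinear in $\director$ (through $\vec{Z}(\director)$ and the constraint term), so the first variation at an arbitrary iterate $(\kdirector,\klambda)$ bears no automatic range compatibility with the Hessian there. In finite dimensions this amounts to a Fredholm-alternative condition---the Newton residual must be orthogonal to the null space of the symmetric operator $a|_{\ker(b)}$---and nothing in the variational origin of $F$ forces that; the argument you allude to would work for a quadratic $\mathcal{L}$ but not here. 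If you want to rescue existence without coercivity you need an additional structural ingredient specific to this linearization, and neither your proposal nor the paper's brief citation supplies one.
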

\begin{proof}
Following a mixed formulation approach based on \cite{Brenner1, Braess1, Boffi1}, Lemmas \ref{boundedlinearforms} and \ref{bilinearformcontinuity} guarantee the existence of a solution to the system given by \eqref{generalizedNewtoniterationweakform1} and \eqref{generalizedNewtoniterationweakform2}. In the event that $\kappa =1$ or that $\kappa$ satisfies the small data assumptions, Lemma \ref{coercivityauv} or \ref{coercivitysmalldata} coupled with Lemma \ref{bubblespacelemma} implies that the solution is also unique.
\end{proof}

\subsection{Error Analysis}

In the previous section, the derived weak coercivity constant depends on the mesh parameter $h$. Therefore, as $h$ approaches zero so too does the weak coercivity constant for the pair $V_h$ and $\Pi_h$. However, the convergence of the scheme for the enriched Lagrangian finite-element spaces composing $V_h$ is only slightly compromised. In this section, we derive approximation error bounds for the discrete solution. Throughout this section, it is assumed that Assumptions \ref{secass} and \ref{secass2} apply. Let $(\vec{u}, q)$ represent a solution to the continuum variational system given by \eqref{contgeneralizedNewtoniterationweakform1} and \eqref{contgeneralizedNewtoniterationweakform2} and $(\vec{u}_h, q_h)$ be the unique solution to the discrete system in \eqref{generalizedNewtoniterationweakform1} and \eqref{generalizedNewtoniterationweakform2}. As above, denote the dimension of $\Omega$ by $N=2, 3$.

\begin{lemma}
Let $\Pi_h$ and $V_h$ be defined as in \eqref{pispace} and \eqref{vspace} with $m=2$. Under Assumptions \ref{secass} and \ref{secass2}, for $\vec{u} \in \Hn{3}{\Omega}^3$ and $q \in \Hone{\Omega}$ there exists $C_a>0$ such that
\begin{equation} \label{errorapprox}
\Hdcnorm{\vec{u}-\vec{u}_h}{\Omega} \leq C_a h \big (\Hnnorm{\vec{u}}{3} + \Honenorm{q}{\Omega} \big ).
\end{equation}
\end{lemma}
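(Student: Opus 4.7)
The plan is to combine the standard abstract Brezzi--Fortin error estimate for mixed finite-element schemes with explicit polynomial interpolation rates on $V_h$ and $\Pi_h$. The hypotheses of the abstract estimate are already established: Lemma \ref{bilinearformcontinuity} provides continuity of the bilinear forms $a$ and $b$, Lemma \ref{coercivityauv} (or \ref{coercivitysmalldata}) gives coercivity of $a$ on all of $V_h$, and Lemma \ref{bubblespacelemma} yields weak coercivity of $b$ on the pair $(V_h, \Pi_h)$ with inf-sup constant $\zeta \geq ch$ for some $c$ independent of $h$.

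First, I would invoke the Brezzi--Fortin error estimate (cf., e.g., \cite{Brenner1, Boffi1}), which, given \emph{full} coercivity of $a$ on $V_h$, yields
\begin{equation*}
\Hdcnorm{\vec{u}-\vec{u}_h}{\Omega} \leq \left( 1 + \frac{M_a}{\alpha_0}\right)\left( 1 + \frac{M_b}{\zeta}\right) \inf_{\vec{v}_h \in V_h} \Hdcnorm{\vec{u}-\vec{v}_h}{\Omega} + \frac{M_b}{\alpha_0} \inf_{\mu_h \in \Pi_h} \Ltwonorm{q - \mu_h}{\Omega},
\end{equation*}
with $M_a, M_b$ the continuity constants of $a, b$ and $\alpha_0$ the coercivity constant of $a$. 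It is essential to use this sharper variant, since in it the factor $1/\zeta$ multiplies only the velocity best-approximation term, leaving the pressure best-approximation coefficient $O(1)$; otherwise, the $O(h)$ convergence rate would collapse to $O(1)$ after substitution of $\zeta = O(h)$.

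Second, I would apply standard interpolation theory on quasi-uniform, affine-rectangular meshes \cite{Brenner1}. For the componentwise $Q_2$ Lagrange interpolant $I_h \vec{u}$ of $\vec{u} \in \Hn{3}{\Omega}^3$, one has $\Honenorm{\vec{u} - I_h \vec{u}}{\Omega} \leq C h^2 \Hnnorm{\vec{u}}{3}$, and since $\Hdcnorm{\cdot}{\Omega} \leq \Honenorm{\cdot}{\Omega}$, this transfers directly to $\inf_{\vec{v}_h \in V_h} \Hdcnorm{\vec{u} - \vec{v}_h}{\Omega} \leq C h^2 \Hnnorm{\vec{u}}{3}$. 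For $q \in \Hone{\Omega}$, the $L^2$ projection onto $P_0$ satisfies $\inf_{\mu_h \in \Pi_h} \Ltwonorm{q - \mu_h}{\Omega} \leq C h \Honenorm{q}{\Omega}$. Substituting these bounds into the Brezzi estimate and using $\zeta \geq ch$ produces contributions of $O(1/h)\cdot O(h^2) \Hnnorm{\vec{u}}{3} = O(h) \Hnnorm{\vec{u}}{3}$ from the first term and $O(1)\cdot O(h) \Honenorm{q}{\Omega} = O(h) \Honenorm{q}{\Omega}$ from the second, yielding the desired bound.

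The main obstacle, as flagged above, is selecting and applying the correct form of Brezzi's estimate — specifically one in which full $V_h$-coercivity of $a$ suppresses $1/\zeta$ in front of the pressure best-approximation term; otherwise the combined estimate cannot recover the $O(h)$ rate given that $\zeta = O(h)$. A secondary technicality is admissibility of the Lagrange interpolant in $V_h$: since $\vec{u} \in \Hdivnot{\Omega} \cap \Hcurlnot{\Omega}$ on a polyhedron implies $\vec{u}=\vec{0}$ on $\partial \Omega$, the $Q_2$ Lagrange interpolant vanishes on each boundary face and therefore lies in $V_h$.
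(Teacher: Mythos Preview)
Your proposal is correct and follows essentially the same route as the paper: invoke the abstract Brezzi-type estimate in which the factor $1/\zeta$ multiplies only the $V_h$ best-approximation term (the paper cites \cite[Theorem~5.2.2]{Boffi1} for the specific form $\Hdcnorm{\vec{u}-\vec{u}_h}{\Omega}\le\frac{4C_AC_B}{\alpha_0\zeta}E_u+\frac{C_B}{\alpha_0}E_q$), then feed in the $O(h^2)$ $Q_2$ interpolation bound for $\vec{u}\in H^3$ and the $O(h)$ $P_0$ approximation bound for $q\in H^1$, and finally absorb $\zeta=O(h)$ to recover the overall $O(h)$ rate. Your emphasis on needing the version of Brezzi's estimate that exploits full $V_h$-coercivity so that $1/\zeta$ does not contaminate the pressure term is exactly the crux, and your handling of the boundary admissibility of the interpolant is a nice clarification the paper leaves implicit.
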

\begin{proof}
Let $\alpha_0$ denote the coercivity constant from either Lemma \eqref{coercivityauv} or \eqref{coercivitysmalldata}. Furthermore, let $\zeta$ denote the $h$-dependent weak coercivity constant derived in Lemma \eqref{bubblespacelemma}. By Theorem 5.2.2 in \cite{Boffi1},
\begin{equation} \label{boffiinequality}
\Hdcnorm{\vec{u} - \vec{u}_h}{\Omega} \leq \frac{4 C_A C_B}{\alpha_0 \zeta} E_u + \frac{C_B}{\alpha_0} E_{q},
\end{equation}
where $C_A$ and $C_B$ are the continuity constants associated with $a(\cdot, \cdot)$ and $b(\cdot, \cdot)$, respectively, and
\begin{align*}
E_u = \inf_{v_h \in V_h} \Hdcnorm{\vec{u} - \vec{v}_h}{\Omega}, & & E_{q} = \inf_{\gamma_h \in \Pi_h} \Ltwonorm{q - \gamma_h}{\Omega}.
\end{align*}
Note that
\begin{equation*}
\inf_{v_h \in V_h} \Hdcnorm{\vec{u} - \vec{v}_h}{\Omega} \leq C_f \inf_{v_h \in V_h} \Honenorm{\vec{u}-\vec{v}_h}{\Omega},
\end{equation*}
where $C_f$ is the constant used in \eqref{Cfreference}. Let $\mathcal{I}^h f$ denote the global interpolant of $f$ over the appropriate finite-element space. Since $\{\triangulation\}$ is quasi-uniform, it is, in particular, non-degenerate. Therefore, applying \cite[Theorem 4.4.24]{Brenner1} to the discrete space $V_h$, there exists a $C_5>0$, such that
\begin{align*}
\left ( \sum_{T \in \triangulation} \Vert \vec{v} - \mathcal{I}^h \vec{v} \Vert^2_{H^1(T)} \right)^{1/2} = \Honenorm{\vec{v} - \mathcal{I}^h \vec{v}}{\Omega} \leq C_5 h^2 \Hnnorm{\vec{v}}{3},& &\forall \vec{v} \in \Hn{3}{\Omega}.
\end{align*}
This implies that if $\vec{u} \in \Hn{3}{\Omega}^3$, then
\begin{equation} \label{Vhbound}
\inf_{\vec{v}_h \in V_h} \Hdcnorm{\vec{u} - \vec{v}_h}{\Omega} \leq C_f C_5 h^2 \Hnnorm{\vec{u}}{3}.
\end{equation}

For $\Pi_h$, Theorem 3.1.6 in \cite{Ciarlet1} implies that there exists a $C_6>0$ such that
\begin{align*}
\Ltwonorm{\gamma-\mathcal{I}^h \gamma}{\Omega} \leq C_6 h \Honenorm{\gamma}{\Omega}, & &\forall \gamma \in \Hone{\Omega}.
\end{align*}
Hence, if $q \in \Hone{\Omega}$,
\begin{equation} \label{Pihbound}
\inf_{\gamma_h \in \Pi_h} \Ltwonorm{q-\gamma_h}{\Omega} \leq C_6 h \Honenorm{q}{\Omega}.
\end{equation}
Combining \eqref{Vhbound} and \eqref{Pihbound} with \eqref{boffiinequality} yields the error estimate
\begin{align*}
\Hdcnorm{\vec{u}-\vec{u}_h}{\Omega} &\leq \frac{4 C_A C_B}{\alpha_0 \zeta} C_f C_5 h^2 \Hnnorm{\vec{u}}{3} + \frac{C_B}{\alpha_0} C_6 h \Honenorm{q} \nonumber \\
&= \frac{18C_A C_B C_f^2 C_{*} \sqrt{\beta C_d} C_5}{\alpha \rho^N \alpha_0} h \Hnnorm{\vec{u}}{3} + \frac{C_B C_6}{\alpha_0} h\Honenorm{q}{\Omega}.
\end{align*}
Taking $C_a = \max \left ( \frac{18C_A C_B C_f^2 C_{*} \sqrt{\beta C_d} C_5}{\alpha \rho^N \alpha_0}, \frac{C_B C_6}{\alpha_0} \right)$, \eqref{errorapprox} is obtained.
\end{proof}

Thus, the approximation is convergent for $V_h$--$\Pi_h$ but with an order of sub-optimality, due to the weak coercivity constant's dependence on the mesh parameter. However, use of a discrete $\Hn{-1}{\Omega}$ norm for the space $\Pi_h$ is currently being considered as a means of eliminating this mesh dependence. 

\subsection{Practical Choice of Finite Elements}

The bubble enrichment discussed above is non-standard in its incorporation of $\kdirector$ in the construction of the bubbles. Therefore, during numerical implementation, it was desirable to find an experimentally stable, standard, finite-element pair closely related to the spaces discussed above. It was observed that $Q_1$--$Q_1$ finite-element discretizations resulted in singular matrices. This implies that $Q_1$--$Q_1$ is not a pair for which $b(\cdot, \cdot)$ is weakly coercive. Such a phenomenon is not unique. For example, instabilities arise for equal order elements in Galerkin approaches to both the Stokes' equations \cite{Bochev1} and the Navier-Stokes' equations \cite{Franca1}. 

On the other hand, in the numerical experiments to be discussed below, mixed finite-element approaches, such as $Q_2$--$P_0$ discretizations, experimentally appear to admit weak coercivity without the need for rising order finite-element implementations or bubble enrichments. Corollary \ref{stabilitycorollary} implies that for a piecewise constant initial iterate, the update element space $Q_2$--$P_0$ implies weak coercivity for the first Newton iteration. With this assurance, coupled with the empirical weak coercivity evidence for $Q_2$--$P_0$, we employ $Q_2$--$P_0$ spaces for all iterations in the experiments below. In the event that singular matrices occur for the $Q_2$--$P_0$ discretization of a particular problem, the bubble enriched finite-element pair $V_h$--$\Pi_h$, defined in \eqref{pispace} and \eqref{vspace}, may be implemented and is particularly attractive because the rising order of the bubble functions, $b_T \kdirector \vert_T$, on each element does not increase the number of unknowns at each Newton iteration.

\section{Numerical Methodology} \label{nummethodology}

The algorithm to perform the minimization discussed in previous sections has three stages; see Algorithm \ref{algo}. The outermost phase is nested iteration (NI) \cite{McCormick1, Starke1}, which begins on a specified coarsest grid level. Newton iterations are performed on each grid, updating the current approximation after each step. The stopping criterion for the Newton iterations at each level is based on a specified tolerance for the current approximation's conformance to the first-order optimality conditions in the standard Euclidean $l_2$ norm. In the numerical experiments to follow, this tolerance was always $10^{-3}$. The resulting approximation is then interpolated to a finer grid. The current implementation performs uniform grid refinement after each set of Newton iterations.

The Newton iteration systems are constructed by applying finite-element discretizations on each grid. The resulting, relatively sparse, matrix has the anticipated saddle-point block structure
\begin{equation*}
\left [ \begin{array}{c c}
\vec{A} & \vec{B} \\
\vec{B}^{T} & \vec{0} \end{array} \right ] .
\end{equation*}
The matrix is inverted using LU decomposition in order to solve for the discrete updates $\ddirector_h$ and $\dlambda_h$. Finally, an incomplete Newton correction is performed. That is, the new iterates are given by
\begin{equation} \label{corrections}
 \left [ \begin{array}{c}
 \director_{k+1} \\
 \lambda_{k+1} 
 \end{array} \right ]
= \left [ \begin{array}{c} 
\kdirector \\
\klambda \\
\end{array} \right ] + \omega
\left [ \begin{array}{c}
\ddirector_h \\
\dlambda_h
\end{array} \right ],
 \end{equation}
where $\omega \leq 1$. This is to ensure relatively strict adherence to the constraint manifold, which is necessary for the well-posedness discussed above. For this algorithm, $\omega$ is chosen to begin at $0.2$ on the coarsest grid and increases by $0.2$, to a maximum of $1$, after each grid refinement, so that as the approximation converges, larger Newton steps are taken. For complicated boundary conditions, such damped Newton steps are important in preventing method divergence. The grid management and discretizations are implemented using the deal.II finite-element library, which is an aggressively optimized and parallelized open-source library widely used in scientific computing \cite{BangerthHartmannKanschat2007, DealIIReference}. In practice, as discussed above, $Q_2$--$P_0$ discretizations were observed to experimentally admit weak coercivity. Therefore, $Q_2$--$P_0$ elements were used to approximate $\ddirector_h$ and $\dlambda_h$ on each grid for the numerical tests.

\vspace{.3cm}
\begin{algorithm}[H] \label{algo}
\SetAlgoLined
~\\
0. Initialize $(\director_0, \lambda_0)$ on coarse grid.
~\\
\While{Refinement limit not reached}
{
	\While{First-order optimality conformance threshold not satisfied}
	{
		1. Set up discrete linear system \eqref{newtonhessian} on current grid, $H$. ~\\
		2. Solve for $\ddirector_{H}$ and $\dlambda_{H}$. ~\\
		3. Compute $\director_{k+1}$ and $\lambda_{k+1}$ as in \eqref{corrections}. ~\\
	}
	4. Uniformly refine the grid. ~\\
	5. Interpolate $\director_{H} \to \director_{h}$ and $\lambda_{H} \to \lambda_h$.
}
\caption{Newton's method minimization algorithm with NI}
\end{algorithm}
\vspace{.3cm}

\subsection{Free Elastic Numerical Results}\label{FreeElasticResults}

The general test problem in this section considers a classical domain with two parallel substrates placed at distance $d=1$ apart. The substrates run parallel to the $xz$-plane and perpendicular to the $y$-axis. It is assumed that this domain represents a uniform slab in the $xy$-plane. That is, $\director$ may have a non-zero $z$ component but $\pd{\director}{z} = \vec{0}$. Hence, we consider the 2-D domain $\Omega = \{ (x,y) \text{ } \vert \text{ } 0 \leq x,y \leq 1 \}$. The problem assumes periodic boundary conditions at the edges $x=0$ and $x=1$. Dirichlet boundary conditions are enforced on the $y$-boundaries. As discussed above, the simplification outlined in \eqref{stronganchoringdivthm} is relevant for this domain and boundary conditions.

\begin{figure}[h!]
\centering
\begin{subfigure}{.49 \textwidth}
\centering
  \includegraphics[scale=.30]{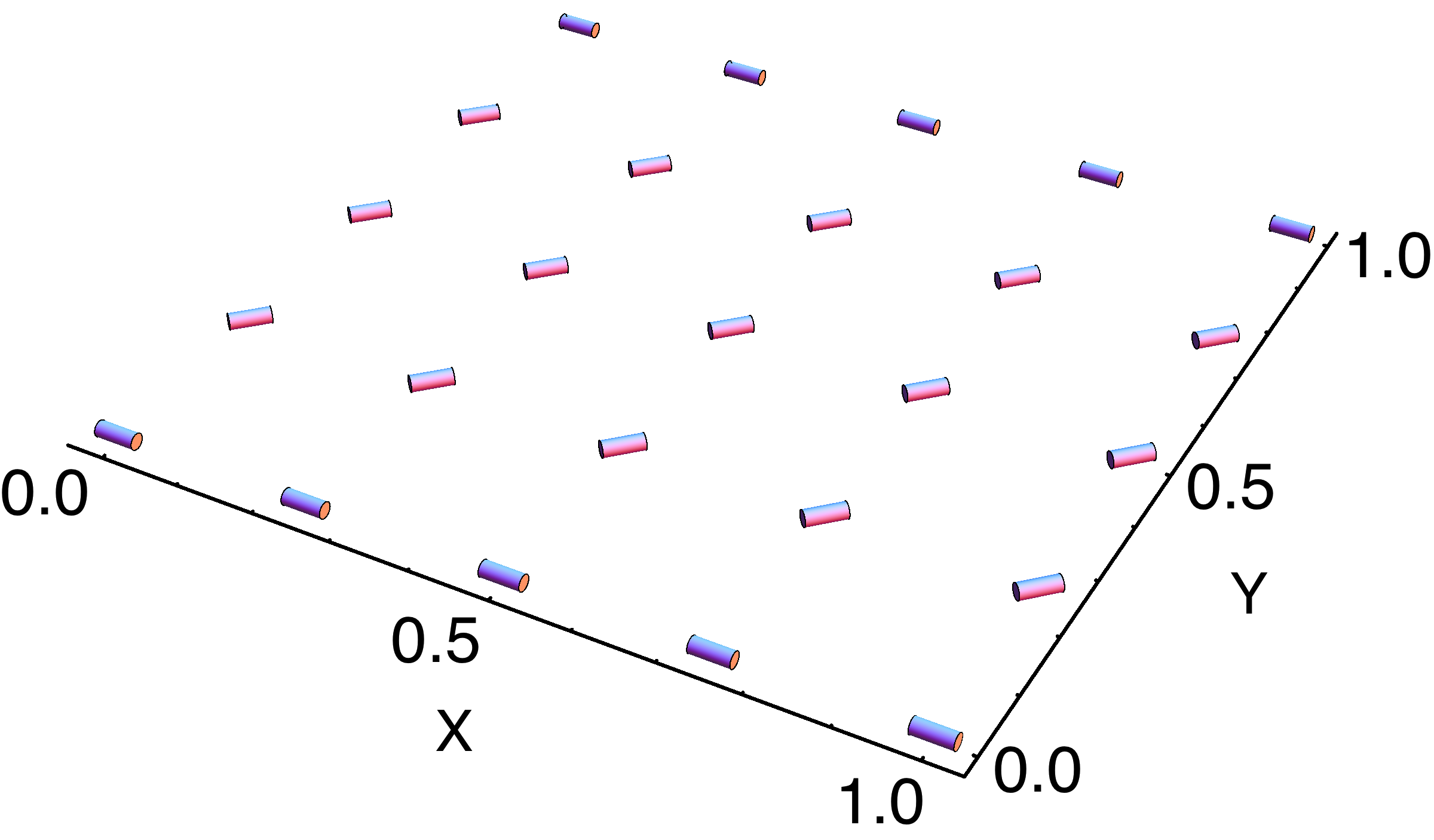}
\end{subfigure}
\begin{subfigure}{.49 \textwidth}
\raggedright
  \includegraphics[scale=.30]{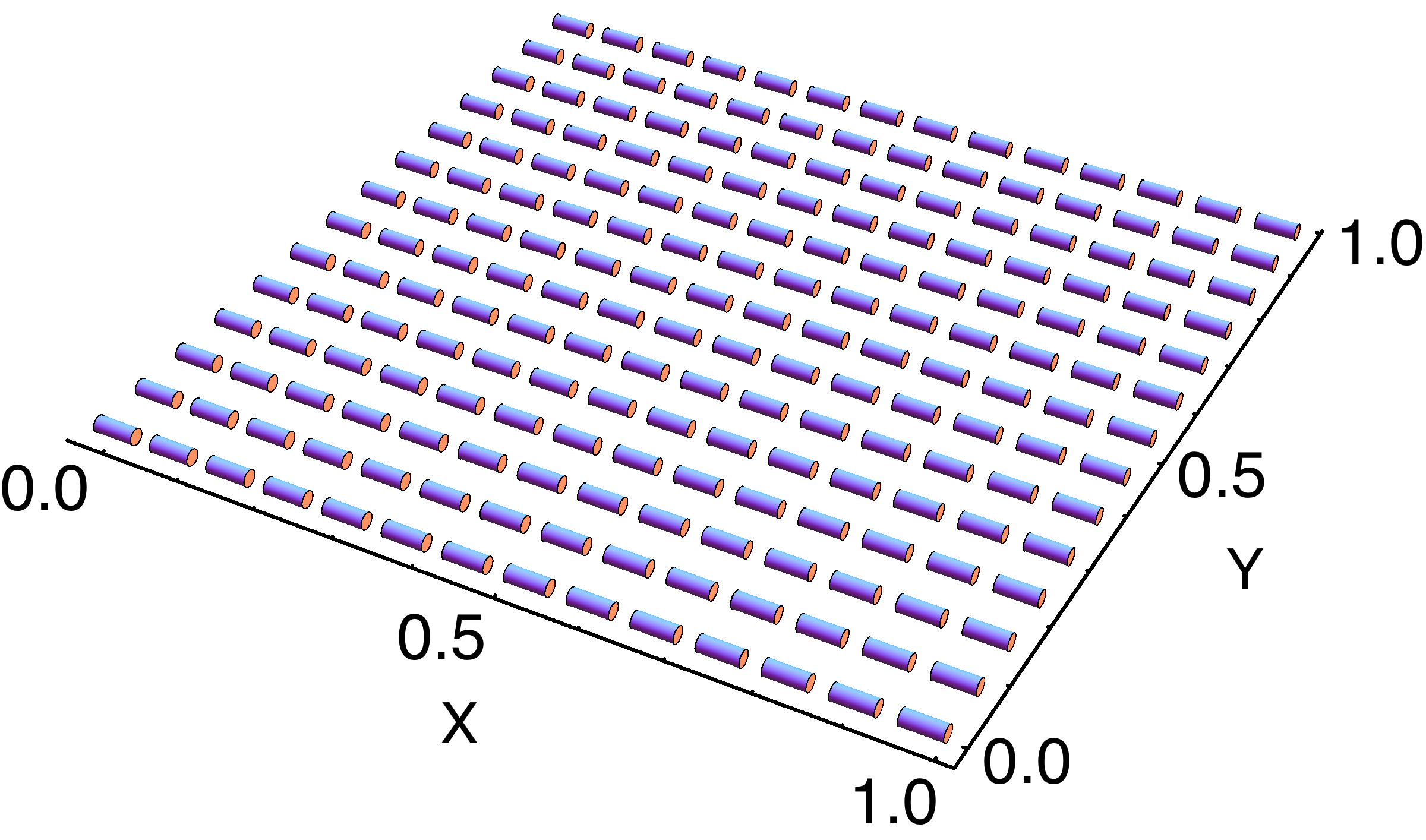}
\end{subfigure}
\caption{\small{Initial guess (left) on $4 \times 4$ mesh with initial free energy of $5.467$ and resolved solution (right) on $128 \times 128$ mesh (restricted for visualization) with final free energy of 0 for a uniformly aligned boundary.}}
\label{FEsimpleBC}
\end{figure}

The first numerical experiment is run on one of the simplest configurations of this type. Along each of the substrates the liquid crystal rods are uniformly aligned parallel to the $x$-axis. The relevant Frank constants are $K_1=K_2=K_3=1$. The problem is solved on a $4 \times 4$ coarse grid with five successive uniform refinements resulting in a $128 \times 128$ fine grid. The initial guess and computed, converged solution are displayed in Figure \ref{FEsimpleBC}.

The final minimized functional energy is $\mathcal{F}_1 = 0$, compared to the initial guess energy of $5.467$. In Table \ref{gridprogFEsimpleBC}, the number of Newton iterations per grid is detailed as well as the conformance of the solution to the first-order optimality conditions after the first and final Newton steps, respectively, on each grid. Assuming the presence of solvers that scale linearly with the number of non-zeros in the matrix, the work required in these iterations is roughly $1.34$ times that of assembling and solving a single linearization step on the finest grid. In contrast, without nested iteration, the algorithm requires $21$ damped Newton steps on the $128 \times 128$ finest grid alone, to satisfy the tolerance limit. The application of damped Newton steps becomes even more important when beginning on finer grids with a rough initial guess, as divergence can be more prevalent. Table \ref{gridprogFEsimpleBC} also reveals the performance of the algorithm with respect to the pointwise constraint, presenting the increasingly tighter minimum and maximum director deviations from unit length at the quadrature nodes. The computed equilibrium solution behaves as expected with the rods uniformly aligning parallel to the $x$-axis.

\begin{table}[h!]
\centering
{\small
\begin{tabular}{|c|c|c|c|c|c|}
\hline
Grid Dim. &Newton Iter.&Init. Res.&Final Res.& Deviation in $\ltwonorm{\director}^2$ & Final Energy\\
\hline
$4 \times 4$ & 18 & 4.35e-00 & 4.39e-04 &6.17e-06, 5.54e-05 & 4.941e-08\\
\hline
$8 \times 8$ & 1 & 2.44e-04 & 9.74e-05 & 1.25e-06, 2.26e-05 & 7.905e-09\\
\hline
$16 \times 16$ & 1 & 5.48e-05 & 1.10e-05 & 1.26e-07, 4.55e-06 & 3.162e-10\\
\hline
$32 \times 32$ & 1 & 6.42e-06 & 1.35e-11 & 4.20e-14, 4.30e-11 & 7.932e-21\\
\hline
$64 \times 64$ & 1 & 6.77e-12 & 6.37e-14 & -4.00e-16, 0 & 0\\
\hline
$128 \times 128$ & 1 & 1.30e-13 & 1.14e-13 & -4.00e-16, 0 & 0\\
\hline
\end{tabular}
}
\caption{\small{Grid and solution progression for uniform free elastic boundary conditions with initial and final residuals for the first-order optimality conditions, minimum and maximum director deviations from unit length at the quadrature nodes, and final functional energy on each grid.}}
\label{gridprogFEsimpleBC}
\end{table}

The second test, run for the free elastic slab problem, incorporates twist boundary conditions and unequal Frank constants. On the lower slab, along $y=0$, the nematic rods are aligned parallel to the $x$-axis. For the upper slab, the rods are uniformly aligned along the $z$-axis. The relevant constants for this run are $K_1 = 1$, $K_2 = 1.2$, and $K_3 = 1$. This implies that $\kappa = K_2/K_3 > 1$. The solves are again performed on a $4 \times 4$ coarse grid, uniformly ascending to a $128 \times 128$ fine grid. The expected configuration for such boundary conditions is a twisted equilibrium solution along the $y$-axis. Indeed, the numerically resolved solution in Figure \ref{FETwistedBC}, displayed alongside the initial guess, demonstrates such a twist. The final minimized functional energy is $\mathcal{F}_1 = 1.480$, compared to the initial guess energy of $12.534$. Table \ref{gridprogFETwistedBC} enumerates the algorithm run attributes. 

\begin{figure}[h!]
\centering
\begin{subfigure}{.49 \textwidth}
\centering
  \includegraphics[scale=.30]{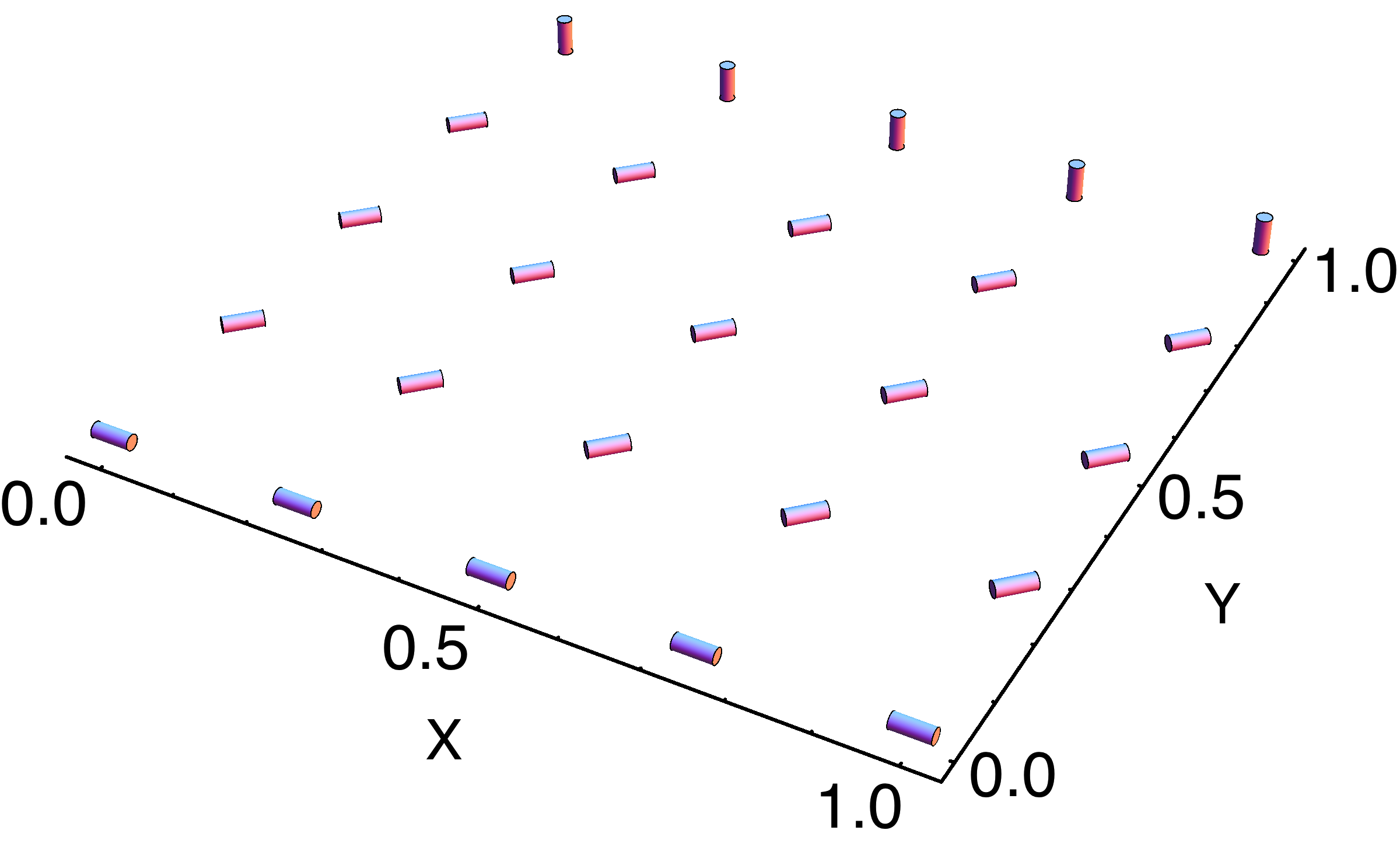}
\end{subfigure}
\begin{subfigure}{.49 \textwidth}
\raggedright
  \includegraphics[scale=.30]{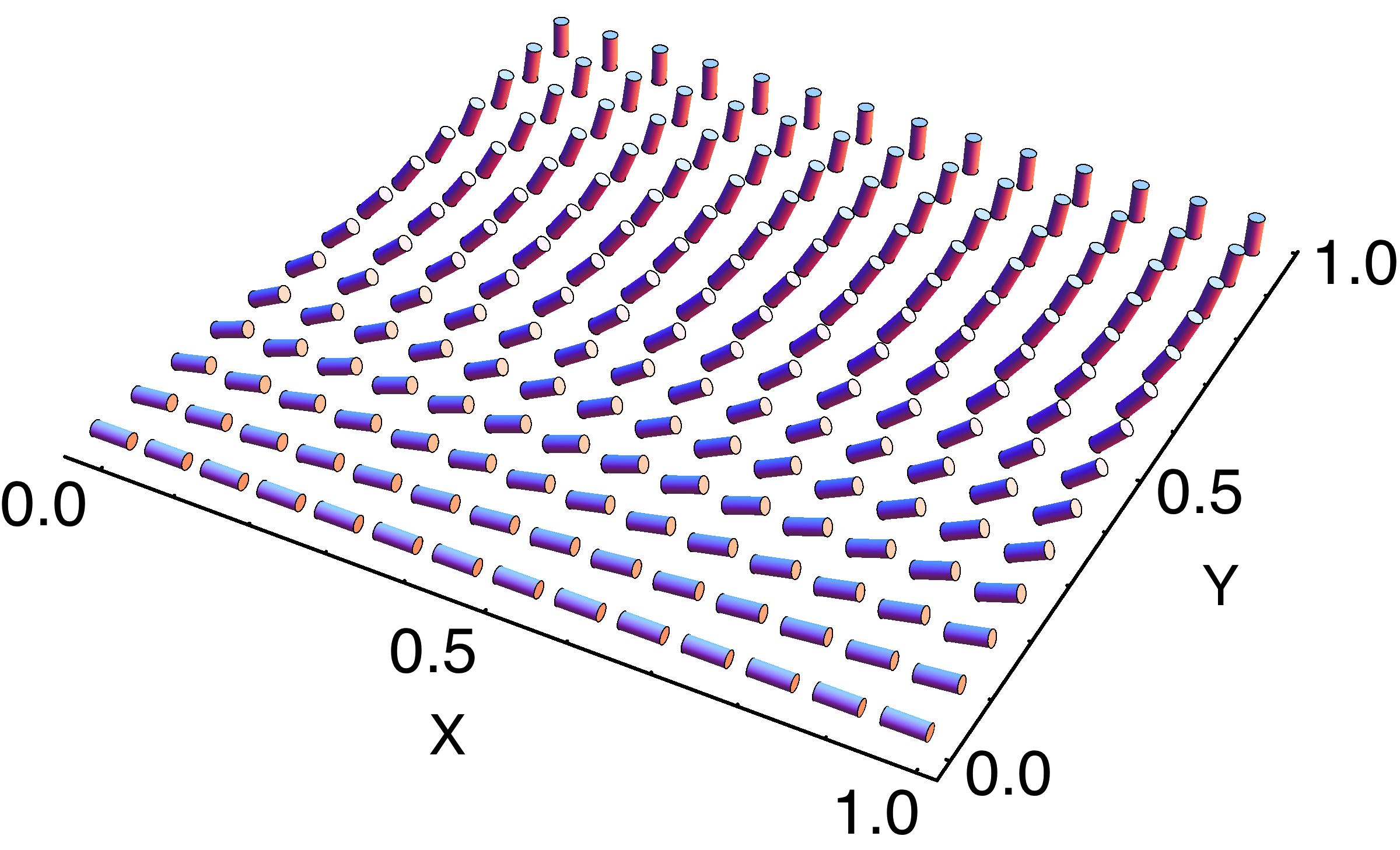}
\end{subfigure}
\caption{\small{Initial guess (left) on $4 \times 4$ mesh with initial free energy of $12.534$ and resolved solution (right) on $128 \times 128$ (mesh restricted for visualization) with final free energy of 1.480 for a twist boundary.}}
\label{FETwistedBC}
\end{figure}

As in Table \ref{gridprogFEsimpleBC} above, a sizable majority of the Newton iteration computations are isolated to the coarsest grids, with the finest grids requiring only one Newton iteration to reach the residual tolerance limit. Therefore, most of the computational cost is also isolated to the cheaper coarse grids rather than the finer levels. Here, the total work required is approximately $1.43$ times that of assembling and solving a single linearization step on the finest grid. Without nested iteration, $22$ damped Newton steps are required on the finest grid to compute the equilibrium solution. 

\begin{table}[h!]
\centering
{\small
\begin{tabular}{|c|c|c|c|c|c|}
\hline
Grid Dim. &Newton Iter.&Init. Res.&Final Res.& Deviation in $\ltwonorm{\director}^2$ & Final Energy\\
\hline
$4 \times 4$ & 19 & 6.71e-00 & 3.97e-04 & -5.69e-05, 1.50e-04 & 1.481\\
\hline
$8 \times 8$ & 5 & 1.80e-02 & 1.84e-04 & -4.10e-06, 2.57e-06 & 1.480\\
\hline
$16 \times 16$ & 2 & 4.51e-03 & 1.80e-04 & -3.27e-07, 1.51e-07 & 1.480 \\
\hline
$32 \times 32$ & 2 & 1.13e-03 & 2.09e-14 & -1.47e-08, 6.88e-09 & 1.480\\
\hline
$64 \times 64$ & 1 & 2.82e-04 & 4.31e-11 & -9.21e-10, 4.31e-10 & 1.480\\
\hline
$128 \times 128$ & 1 & 7.05e-05 & 1.36e-12 & -5.75e-11, 2.69e-11 & 1.480\\
\hline
\end{tabular}
}
\caption{\small{Grid and solution progression for the free elastic problem with twist boundary conditions with initial and final residuals for the first-order optimality conditions, minimum and maximum director deviations from unit length at the quadrature nodes, and final functional energy on each grid.}}
\label{gridprogFETwistedBC}
\end{table}

In the final numerical run, letting $r = 0.25$ and $s = 0.95$, the boundary conditions considered are
\begin{align*}
n_1 &= 0,\\
n_2 &= \cos\big(r(\pi + 2 \tan^{-1}(X_m) -2 \tan^{-1}(X_p))\big), \\
n_3 &= \sin\big(r(\pi + 2 \tan^{-1}(X_m) -2 \tan^{-1}(X_p))\big),
\end{align*}
where $X_m=\frac{-s\sin(2\pi(x+r))}{-s\cos(2\pi(x+r))-1}$ and $X_p = \frac{-s\sin(2\pi(x+r))}{-s\cos(2\pi(x+r))+1}$. Such boundary conditions are meant to simulate nano-patterned surfaces important in current research \cite{Atherton1, Atherton2}. Even in the absence of electric fields, such patterned surfaces result in complicated director configurations throughout the interior of $\Omega$. 

A similar grid progression to the cases above is applied. The Frank elastic constants for the experiment are $K_1=1$, $K_2=.62903$, and $K_3=1.32258$. This results in $\kappa < 1$. The final solution, as well as the initial guess, are displayed in Figure \ref{FENanoBC}. Table \ref{gridprogNanoBC}, again, details the relevant output data. The computed equilibrium configuration demonstrates the expected alignment and symmetries given the patterned surfaces.

The minimized functional energy is $\mathcal{F}_1 = 3.890$, compared to the initial guess energy of $13.242$. The work required is approximately $3.06$ times that of assembling and solving a single linearization step on the finest grid. On the other hand, without nested iterations, $22$ damped Newton steps are required on the finest grid. Therefore, in all cases discussed, nested iteration is successful in significantly reducing the computational work necessary to compute an equilibrium solution. 

\begin{figure}[h!]
\centering
\begin{subfigure}{.49 \textwidth}
\centering
  \includegraphics[scale=.30]{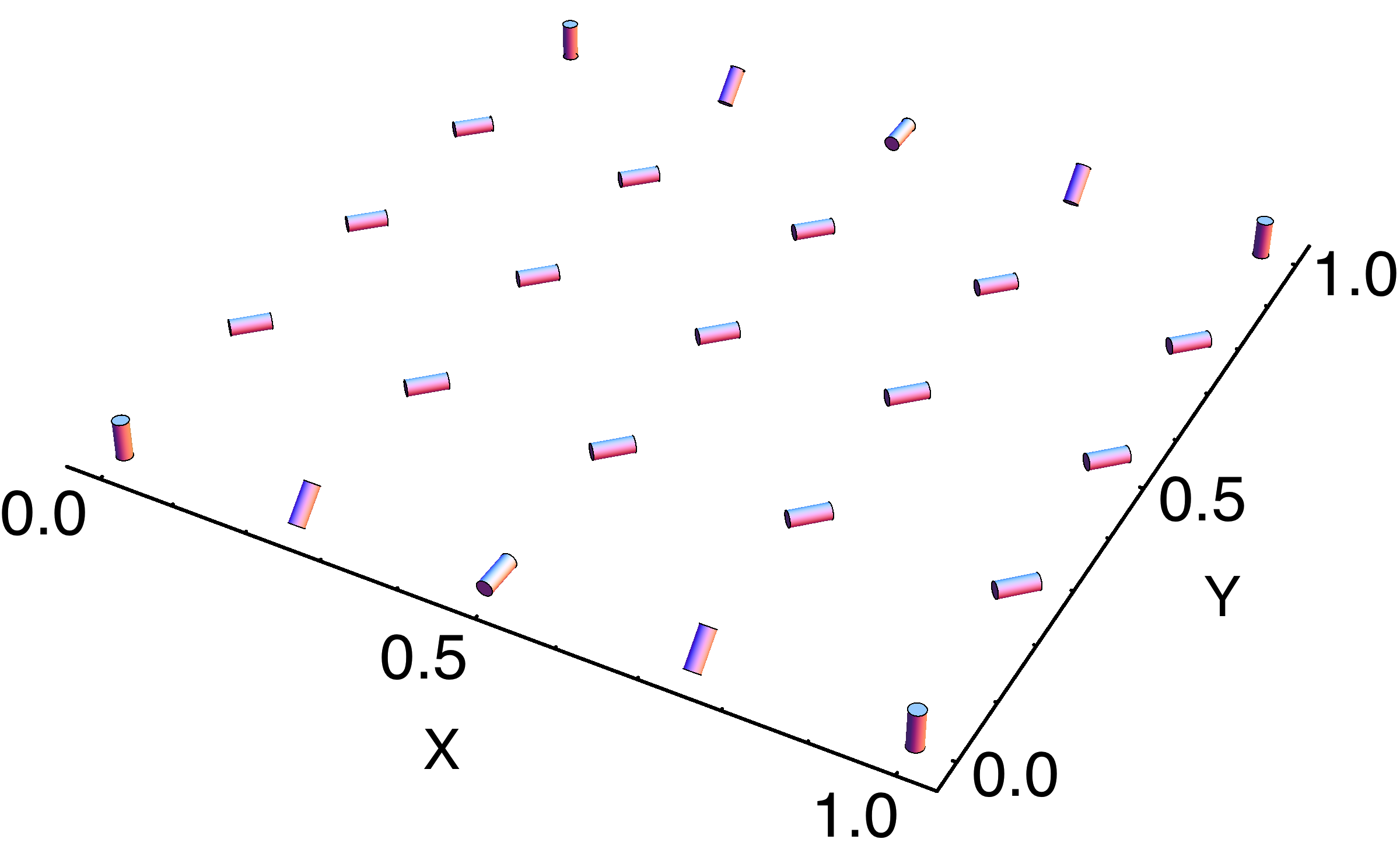}
\end{subfigure}
\begin{subfigure}{.49 \textwidth}
\raggedright
  \includegraphics[scale=.30]{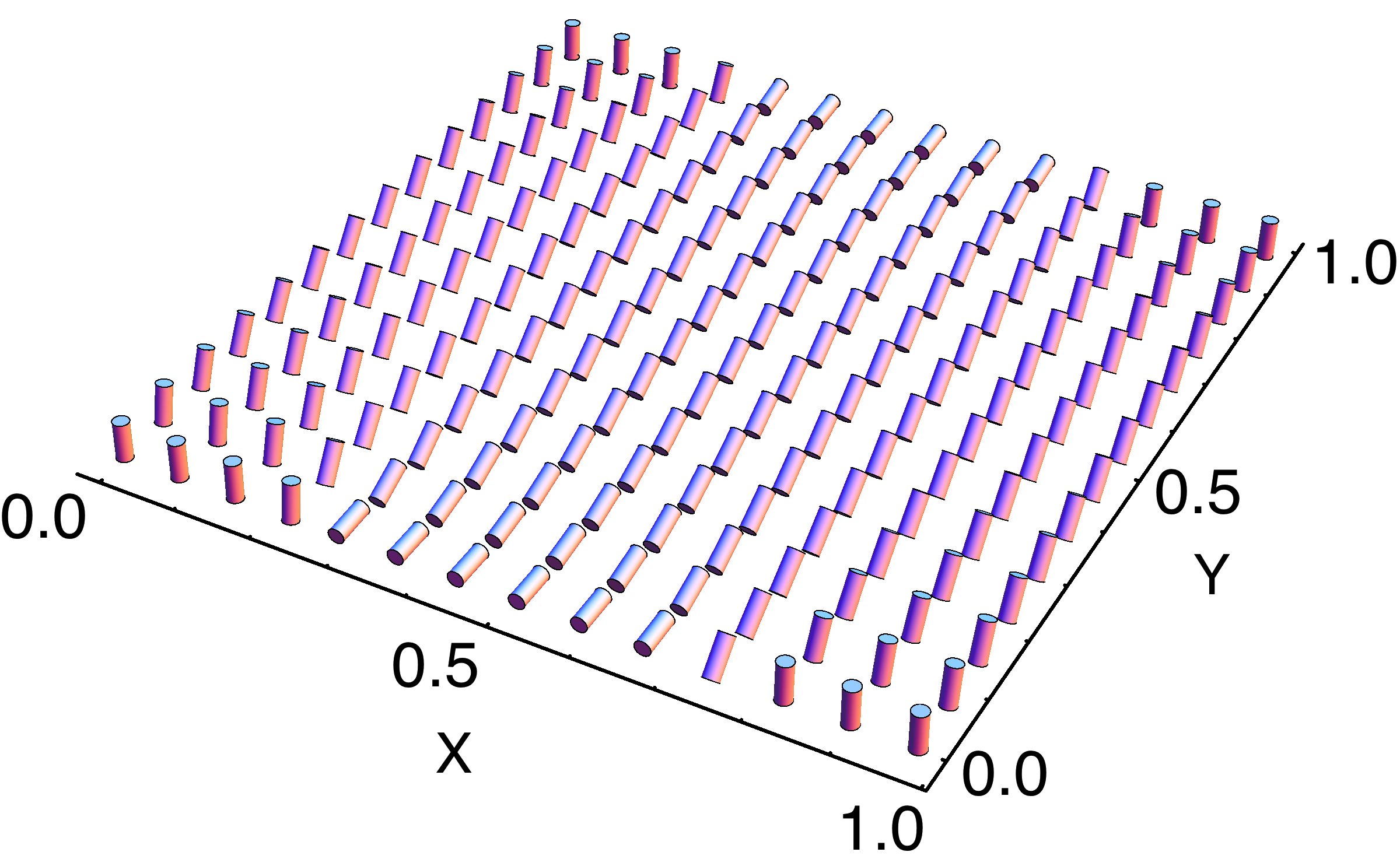}
\end{subfigure}
\caption{\small{Initial guess (left) on $4 \times 4$ mesh with initial free energy of $13.242$ and resolved solution (right) on $128 \times 128$ mesh (restricted for visualization) with final free energy of 3.890 for a nano-patterned boundary.}}
\label{FENanoBC}
\end{figure}

\begin{table}[h!]
\centering
{\small
\begin{tabular}{|c|c|c|c|c|c|}
\hline
Grid Dim. &Newton Iter.&Init. Res.&Final Res.& Deviation in $\ltwonorm{\director}^2$ & Final Energy\\
\hline
$4 \times 4$ & 19 & 7.04e-00 & 4.72e-04 & -9.07e-02, 4.67e-02 & 2.521\\
\hline
$8 \times 8$ & 9 & 1.20e-00 & 3.14e-04 & -8.20e-02, 4.58e-02 & 3.194 \\
\hline
$16 \times 16$ & 6 & 1.06e-00 & 6.71e-05 & -6.69e-02, 3.96e-02 & 3.674\\
\hline
$32 \times 32$ & 3 & 8.22e-01 &  3.42e-12 & -4.31e-02, 2.78e-02 & 3.885\\
\hline
$64 \times 64$ & 3 & 5.04e-01 & 4.75e-14 & -1.73e-02, 1.26e-02 & 3.900\\
\hline
$128 \times 128$ & 2 & 2.24e-01 & 3.00e-09 & -3.51e-03, 2.81e-03 & 3.890\\
\hline
\end{tabular}
}
\caption{\small{Grid and solution progression for patterned boundary conditions with initial and final residuals for the first-order optimality conditions, minimum and maximum director deviations from unit length at the quadrature nodes, and final functional energy on each grid.}}
\label{gridprogNanoBC}
\end{table}

\section{Summary and Future Work} \label{conclusion}

We have discussed a constrained minimization approach for liquid crystal equilibrium configurations in the presence of free elastic effects. Such minimization is founded upon the Frank-Oseen model for liquid crystal free energy. Due to the nonlinearity of the continuum first-order optimality conditions, Newton linearizations were derived. The resulting discrete systems were analyzed, and it was shown that solutions to the discretized Newton iterations exist. If $\kappa=1$ or $\kappa$ satisfies the conditions of the small data assumption in Lemma \ref{coercivitysmalldata} and the assumptions of Lemma \ref{bubblespacelemma} hold, then unique solutions to the discrete Newton iterations are guaranteed for the prescribed discrete spaces. Error analysis was conducted to demonstrate discrete convergence results for the method.

Numerical results demonstrate the accuracy and efficiency of the algorithm in resolving some difficult features for free elastic effects. The experiments address problems that include unequal Frank constants and nano-patterned boundary conditions. The experiments also reveal the necessity for a mixed finite-element approach. Such a requirement exposes an interesting parallel to other problems with similar instabilities such as the Stokes' and Navier-Stokes' equations. The minimization approach overcomes some difficulties inherent to the liquid crystal equilibrium problem, such as the nonlinear unit length director constraint, and effectively deals with heterogeneous Frank constants. The algorithm also productively utilizes nested iteration to reduce computational costs by isolating much of the computational work to the coarsest grids. Such computational work allocation significantly reduces the effective number of Newton iterations on the finest grid, even for the nano-patterned boundary conditions example.

The above method is currently being extended to include electric and flexoelectric effects in order to more accurately capture physical phenomenon important to many applications, such as the study of bistable devices \cite{Davidson1}. The rising complexity involved in these extensions presents interesting challenges, such as the appearance of more complicated saddle-point structures. Development and implementation of specifically tailored solvers for the systems encountered above, as well as those anticipated in future problems, is a priority.

Additionally, investigation into the use of $\Hn{-1}{\Omega}$ norms for the Lagrange multiplier to achieve discrete inf-sup stability independent of the mesh parameter, $h$, are being pursued. Furthermore, analysis of the Newton linearizations for the electric and flexoelectric augmentations will be undertaken. Future work will also include study of effective adaptive refinement and linearization tolerance schemes. Because the energy minimization formulation does not yield an obvious a priori error estimator, new techniques will be explored to flag cells for refinement and determine when grid refinement should occur.

\section*{Acknowledgments}
The authors would like to thank Professors Thomas Manteuffel, Johnny Guzm\'{a}n, and Ludmil Zikatanov for their useful contributions and suggestions.


\bibliographystyle{plain}	

\nocite{*}		

\bibliography{LiquidCrystalMinimization}		

\end{document}